\theoremstyle{plain}
\newtheorem{theorem}{Theorem}[section]
\newtheorem{lemma}[theorem]{Lemma}
\newtheorem{proposition}[theorem]{Proposition}
\newtheorem{cor}{Corollary}
\newtheorem{appl}{Application}[section]
\newtheorem*{theorem*}{Theorem}
\newtheorem*{appl*}{Application}
\theoremstyle{definition}
\newtheorem{definition}{Definition}[section]
\theoremstyle{remark}
\newtheorem*{remark}{Remark}
\numberwithin{equation}{section}
\newcommand\dr{\mathbb}
\renewcommand\d{\textrm}
\newcommand\st{\textrm{ such that }}
\newcommand\bG{{\bf G}}
\newcommand\G{\Gamma}
\newcommand\g{\gamma}
\newcommand\GG{{G / \G}}
\newcommand\GH{{H  \backslash G}}
\renewcommand\phi{\varphi}
\title{Polynomial dynamic and lattice orbits in $S$-arithmetic homogeneous spaces}
\author{Antonin Guilloux}
\begin{document}

\maketitle

\section{Introduction}

Consider an homogeneous space under a locally compact group $G$ and a lattice $\G$ in $G$. Then the lattice naturally acts on the homogeneous space. Looking at a dense orbit, one may wonder how to describe its repartition. One then adopt a dynamical point of view and compare the asymptotic distribution of points in the orbits with the natural measure on the space. In the setting of Lie groups and their homogeneous spaces, several results we will present afterwards showed an equidistribution of points in the orbits.

We address here this problem in the setting of $p$-adic and $S$-arithmetic groups.

\subsection{Historical background}

Ten years ago, F. Ledrappier \cite{ledrappier} explained how Ratner's theory shall be used to understand the asymptotic properties of the action of $\d{SL}(2,\dr Z)$ on the euclidean plane $\dr R^2$. He proved the following:
\begin{theorem}[Ledrappier \cite{ledrappier}]\label{the:led}
Let $\G$ be a lattice of $\d{SL}(2,\dr R)$ of covolume $c(\G)$, $\|.\|$ the euclidean norm on the algebra of $2\times 2$-matrices $\mathcal M(2,\dr R)$, and $v\in \dr R^2$ with non-discrete orbit under $\G$.

Then we have the following limit, for all $\phi\in \mathcal C_c(\dr R^2\setminus\{0\})$:
$$\frac{1}{T}\sum_{\g\in \G\;, \|\g\|\leq T} \phi(\g v)\xrightarrow{T\to \infty} \frac{1}{|v|c(\G)}\int_{\dr R^2\setminus\{0\}} \phi(w) \frac{dw}{|w|} \; .$$
\end{theorem}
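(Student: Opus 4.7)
The plan is to exploit the identification $\dr R^2\setminus\{0\}\cong G/H$, where $G=\d{SL}(2,\dr R)$ and $H$ is the stabilizer of $v$, together with Ratner's theorem in the form proved by Dani and Smillie for $\d{SL}(2,\dr R)$. After rescaling we may assume $v=e_1$, so that $H=U$ is the upper-triangular unipotent subgroup. The key duality is that the $\G$-orbit of $v$ in $G/H$ is non-discrete if and only if the $H$-orbit of the identity coset in $\G\backslash G$ is not closed, and Dani's classification of unipotent orbits on $\d{SL}(2,\dr R)/\G$ then forces this orbit to be dense; more precisely, the unipotent flow $s\mapsto u_s\cdot e\G$ equidistributes with respect to the normalized Haar probability measure on $\G\backslash G$ of total mass $c(\G)^{-1}$.

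Next I would translate the count into an ergodic integral. In Iwasawa coordinates $g=k_\theta a_t u_s$ one finds $g v=e^t(\cos\theta,\sin\theta)$ and $\|g\|^2=e^{2t}(1+s^2)+e^{-2t}$. Hence the ball $B_T=\{g:\|g\|\le T\}$ is fibered over $KA$ by intervals in the $U$-direction: for $w=g v$ with $|w|=e^t$, the $s$-fiber has length $\sim 2T/|w|$ as $T\to\infty$. A thickening argument in the style of Eskin-McMullen (wavefront lemma) then writes, for a small transversal $\Omega\subset KA$,
$$\sum_{\|\g\|\le T}\phi(\g v)\ \approx\ \frac{1}{\d{vol}(\Omega)}\int_\Omega\Bigl(\sum_{\g\in\G}\chi_{B_T}(\g h)\,\phi(\g h v)\Bigr)dh,$$
and the inner sum, as a function of $h$, is essentially a Birkhoff-type average of a test function $\Phi_\phi$ on $\G\backslash G$ taken over a long piece of the $U$-orbit through $\G h$.

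Applying Dani-Smillie equidistribution produces the limit $c(\G)^{-1}\int_{\G\backslash G}\Phi_\phi\,d\mu$, and unfolding back via the fibration $G\to G/U$ replaces this by an integral on $\dr R^2\setminus\{0\}$ against the $G$-quasi-invariant measure $dw/|w|$ dictated by the Iwasawa Jacobian $e^{2t}\,dt\,ds\,d\theta$ together with the polar identity $dw=e^{2t}\,dt\,d\theta$. A careful bookkeeping of the normalizations, including the $1/T$ factor and the $|v|$ absorbed when reducing to $v=e_1$, then yields the asserted prefactor $1/(|v|c(\G))$.

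The main obstacle will be the uniformity at the thickening step: the effective test function has support that both stretches and depends on $T$ (via the $U$-interval lengths $\sim 2T/|w|$), so one must invoke the Dani-Margulis non-divergence estimates to prevent mass from escaping into the cusp of $\G\backslash G$. This is precisely where the non-discreteness hypothesis truly bites: through Ratner's theorem it yields equidistribution of the specific orbit through $v$, rather than mere $\mu$-almost-everywhere equidistribution.
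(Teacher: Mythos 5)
Your proposal follows precisely the duality strategy that Ledrappier used and that this paper abstracts into its general machinery: after reducing to $H=U$ the unipotent stabilizer of $v$, you convert the lattice count on $G/H$ into equidistribution of long pieces of the $U$-orbit on $\G\backslash G$, supplied by Dani--Smillie/Ratner together with a wavefront-type thickening and Dani--Margulis non-divergence in the cusp. This is exactly the specialization to $\d{SL}(2,\dr R)$ of the paper's Theorem~\ref{the:duality} combined with Theorem~\ref{the:Horbit}, so the approach is the same.
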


\begin{remark}
Nogueira \cite{nogueira} proved also the previous theorem for $\G=SL(2,\dr Z)$ using different techniques.
\end{remark}

After that A. Gorodnik develloped the strategy for the space of frames \cite{gorodnik-frames} and eventually A. Gorodnik and B. Weiss gave an abstract theorem for this problem in Lie groups and then applied it to different situations \cite{goroweiss}.

Recently F. Ledrappier and M. Pollicott \cite{ledrappier-pollicott}, and independently the author in its PhD thesis \cite{mathese}, proved a $p$-adic analog of the first theorem for lattices of $\d{SL}(2,\dr Q_p)$ acting on the $p$-adic plane. 

In this paper we adapt this strategy to handle the case of homogeneous space under $S$-arithmetic groups. Our work  can be viewed as the analog of \cite{goroweiss} in this setting.

\subsection{The $S$-arithmetic setting}

We will work in the following arithmetic setting:
let $K$ be a number field, $\mathcal O$ its integer ring and $\mathcal V$ the set of its places. We fix a finite set $S$ in $\mathcal V$ containing the archimedean ones. For all $\nu \in \mathcal V$, we note $K_\nu$ the completion of $K$ associated to $\nu$ and $K_S$ the module product of all $K_\nu$ for $\nu\in S$. This ring has a set of integer, noted $\mathcal O_S$.

Consider $\bG$ a semisimple simply connected $K$-group. We note $G:={\bf G}(K_S)$ its $S$-points, and we fix $\G$ an arithmetic lattice - i.e. commensurable to $\bG(\mathcal O_S)$. Recall that, according to Margulis superrigidity theorem, as soon has the total rank of $G$ is greater than $2$, any lattice in $G$ is an arithmetical one. Then let $H$ be a subgroup of $G$ which is a product $\displaystyle \prod_{\nu\in S}H_\nu$ of closed subgroups of $\bG(K_\nu)$. For example, one can think to the stabilizer of a point for an action of $G$ defined over $K$, i.e. $H=g\bar H g^{-1}$ where $\bar H$ is the $K_S$-points of a $K$-group and $g$ an element in $G$. We will always assume that the subgroup $H$ is unimodular. Some references for these objects are to be found in \cite{platonov-rapinchuk} and \cite{margulis}.

We are interested in the asymptotic distribution of orbits of $\G$ in $\GH$ so we will always assume this orbit to be dense, or equivalently that $H\G$ is dense in $G$. This last asumption is quite different of some recent works in the same area (\cite{gorodnik-oh}, \cite{venkatesh-ellenberg}...) where $H$ is supposed to have a closed projection in $\GG$ and the dynamic appears by looking at larger and larger orbits. In particular, there won't be any adelic arguments in this work.

\subsubsection{Measures and projections}

\begin{definition}
We say that a triple $(G,H,\G)$ is \emph{under study} if we are in the precedent case, that is if there is a number field $K$, a finite set $S$ of places containing the archimedean ones,
 and a $K$-group $\bG$, $K$-reductive and with simply connected semisimple part, such that :
\begin{itemize}
\item $G$ is the $K_S$ points of $\bG$,
\item $\G$ is an arithmetic lattice in $G$,
\item $H$ is the product of unimodular $K_\nu$-subgroups of $\bG(K_\nu)$ for $\nu \in S$,
\item $H\G$ is dense in $G$ and $H$ is not compact,
\item $H$ is a semidirect product $H^{ss}\rtimes H^u$ of a semisimple part and an unipotent radical.
\end{itemize}
\end{definition}

We now fix some notations for projections and measures : the Haar measure on $G$ is noted $m_G$ ; on $H$, $m_H$ ; and $m$ the probability on $\GG$ locally proportional to $m_G$. On $\GH$, as $H$ is unimodular, we have a unique - up to scaling - $G$-invariant measure. We normalize the measure $m_\GH$ on $\GH$ such that $m_G$ is locally the product of $m_H$ and $m_\GH$. The notations for the projections are as shown:
$$
\begin{matrix}
& G& \\
   \tau \swarrow   &  &  \searrow \pi      \\
\GH &   & \GG\\
\end{matrix}
$$

\subsubsection{Balls and volume}

In order to adopt a dynamical point of view, we need to instillate some evolution in the so far static situation. So we consider families $(G_t)_{t\in R}$ of open and bounded subsets in $G$ (often called balls), and consider the sets $\G_t=\G\cap G_t$. Letting $t$ go to $\infty$, we may now consider the asymptotic distribution of the sets $H\backslash H\G_t$ in $\GH$. Of course we will usually consider family $(G_t)$ that are increasing and exhausting (the union of $G_t$ covers $G$).

We introduce a notation for the intersection of such a family $(G_t)$ and its translates with subsets of $G$:
\begin{definition}\label{def:skewball}
Fix $(G_t)_{t\in\dr R}$ a family of open subset $G$, $L$ a subset of $G$ and $g$ an element of $G$. Then for all real $t$, we note $L_t:=L\cap G_t$ the intersection of $G_t$ ad $L$ and $L_t(g)$ the intersection $L\cap G_t g^{-1}$.

As the restriction of the so-called balls of $G$, we call the sets $L_t$ \emph{balls} in $L$, and \emph{skew-balls} the sets $L_t(g)$.
\end{definition}

When $L$ is a subgroup, we can compare the growth of volume of its normal subgroup with respect to the sets $(G_t)$. It may happens that a strict subgroup grows as fast as the whole group. Such a subgroup is exhibited in \cite[Section 12.3]{goroweiss}. We will call such a subgroup dominant:
\begin{definition}
Let $L$ be a unimodular subgroup of $G$ and $m_L$ be its Haar measure. Fix $G_t$ a family of open bounded subsets of $G$, increasing and exhausting.

A normal subgroup $L'$ is said to be dominant in $L$ if for some compact $C$ in $L$, the volume of $C.L'_t$ grows as fast as the volume of $L_t$, i.e. $\frac{m_L(C.L'_t)}{m_L(L_t)}$ does not converge to $0$ with $t$.
\end{definition}

Eventually we need an explicit way to define balls in $\G$. Going back to Ledrappier's theorem, we see that the balls are constructed considering a norm on the algebra of matrices. Moreover, Gorodnik and Weiss \cite{goroweiss} defined their balls in the same spirit, first representing the group $G$ and then using a norm on the matrix algebra in which $G$ is embedded. Our strategy is the same, but for technical reasons we assume firstly that the unipotent radical and the semisimple part are somehow orthogonal with respect to the norm and secondly that the norms are "algebraic".

\begin{definition}
A \emph{size function} $D$ from $G$ to $\dr R_+$ is any function constructed in the following way : consider a $K$-representation $\rho$ of $\bG$ in a space ${\bf V}$ and for all $\nu\in S$ a norm $|\, .\, |_\nu$ on the space $\d{End}({\bf V}(K_\nu))$ verifying :
\begin{enumerate}
\item for all $h_\nu=(h_\nu^{ss},h_\nu^u)$ in $H$, its norm $|h_\nu|_\nu$ is an increasing function of both $|h^{ss}_\nu|_\nu$ and $|h^u_\nu|_\nu$.
\item If $\nu$ is archimedean, the norm $|.|_\nu$ may be written in a suitable basis as the $L_p$-norm for $p$ in $\dr N^*\cup \{\infty\}$. If $\nu$ is ultrametric, we assume that it is the max-norm in some basis.
\end{enumerate}
Now define $D$ for all $g=(g_\nu)_{\nu \in S}$ by the formula $D(g)=\max\{|g_\nu|_\nu \d{ for } \nu \in S\}$.
\end{definition}

In this setting given a size function, we have a family of open bounded subsets $G_t := \{g\in G\st F(g)< t\}$ in $G$. 

\begin{remark}
These two assumptions, especially the first one, are annoying. The second one does not seem to be an important one and in numerous applications our work may be applied without it. For the first one, I do not know wether it is necessary or not. The positive point is that for applications we may verify it (see section \ref{sec:exa}): e.g. there is no condition when $H$ is either unipotent or semisimple. Morever every example given in the historical section fit into the framework of our article.
\end{remark}

\subsection{Statement of the main result}

We prove in this article the following result:
\begin{theorem}\label{the:normball}
Let $(G,H,\G)$ be a triple under study, $D$ be a size function on $G$ and $(G_t)_{t>0}$ be the associated family of balls. Assume that every dominant subgroup $H'$ verifies $H'\G$ is dense in $G$.

Then there is a finite partition $I_1,\ldots, I_l$ of $\dr R_{>0}$, and, for each $1\leq i\leq l$, a function $\alpha_i \, :\, \GH \to \dr R_{>0}$ such that the orbit of the sets $\G_t=G_t\cap \G$ for $t\in I_i$ becomes distributed in $\GH$ according to the density $\alpha_i$ with respect to $m_\GH$. That means, for all $\psi\in \mathcal C_c(\GH)$, we have:
$$\frac{1}{m_H(H_t)}\sum_{\g\in\G_t}\psi(\tau(\g)) \xrightarrow[t\in I_i]{t\to+\infty} \int_\GH \psi(x)\alpha_i(x)dm_\GH(x)\;.$$
\end{theorem}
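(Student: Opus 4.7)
The plan is to adapt the ``counting via equidistribution'' scheme underlying \cite{goroweiss} to the $S$-arithmetic framework, with Ratner's theorem in its $S$-arithmetic form (Ratner, Margulis--Tomanov) as the main dynamical input. For $\psi\in\mathcal C_c(\GH)$, the sum $\sum_{\g\in\G_t}\psi(\tau(\g))$ is the value at $e\G\in\GG$ of the $\G$-periodization $F_t^\psi$ of the function $g\mapsto \mathbf{1}_{G_t}(g)\psi(Hg)$ on $G$. Introducing a smooth bump $\chi_\epsilon$ on $\GG$ of total mass $1$, supported in an $\epsilon$-neighborhood of $e\G$, and unfolding against the projection $G\to\GG$ yields
\begin{equation*}
\int_\GG F_t^\psi\,\chi_\epsilon\, dm \;=\; \int_\GH \psi(x)\,\Phi_t^\epsilon(x)\, dm_\GH(x),
\end{equation*}
where $\Phi_t^\epsilon(x)=\int_H \mathbf{1}_{G_t}(hs(x))\,\tilde\chi_\epsilon(hs(x))\, dm_H(h)$ for a local section $s\colon\GH\to G$ and $\tilde\chi_\epsilon$ a lift of $\chi_\epsilon$ to $G$. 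Replacing $F_t^\psi(e\G)$ by this average is the standard well-rounded approximation; its error should vanish as $\epsilon\to 0$ uniformly in $t$, which is exactly where the first condition in the definition of the size function (decoupling $H^{ss}$ and $H^u$) enters.

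The heart of the proof is then the asymptotic behavior of $\Phi_t^\epsilon(x)/m_H(H_t)$. Written in terms of the skew-ball $H_t(s(x))=H\cap G_t s(x)^{-1}$, this ratio is a normalized $H$-average along the orbit $Hs(x)\G\subset\GG$, whose limit is governed by Ratner's measure classification in the $S$-arithmetic setting. Ratner's theorem applies to the unipotent radical $H^u$, and the $H^{ss}$-action promotes $H^u$-invariance to $H$-invariance by a standard centralizer/wiggling argument; combined with the density hypothesis $H\G$ dense in $G$, one obtains that for $m_\GH$-a.e.\ $x$ the orbit $Hs(x)\G$ equidistributes to the Haar measure $m$ on $\GG$. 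The skew-balls $H_t(s(x))$ do not, however, have a single asymptotic shape: the volumes $m_H(H_t\cap N)$ for normal subgroups $N\triangleleft H$ grow polynomially with only finitely many possible dominant exponents (the norms defining $D$ being algebraic), so a finite partition $I_1,\ldots,I_l$ of $\dr R_{>0}$ emerges, on each piece of which a single dominant subgroup $H'_i$ controls the bulk of $H_t$. The extra hypothesis that $H'_i\G$ is dense for every dominant $H'_i$ allows Ratner to be applied to $H'_i$ as well, producing the limiting density $\alpha_i(x)$ as the normalized shape limit of the $H'_i$-part of the skew-ball $H_t(s(x))$ along $t\in I_i$. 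Letting $\epsilon\to 0$ at the end yields the stated convergence.

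The main obstacle I anticipate is the uniform control of the thickening error in $t$: near the boundary of $G_t$ the set $G_t s(x)^{-1}$ is sensitive to small right perturbations of $s(x)$, and the required well-rounded estimate forces $H_t(g)$ to vary regularly with $g$. This is the technical reason for the first condition on the size function, which essentially makes the skew-ball factor according to $H=H^{ss}\rtimes H^u$; I expect the estimate to extend to the ultrametric max-norm case by the argument used in \cite{mathese,ledrappier-pollicott}. A secondary difficulty is identifying the dominant subgroups and the partition $I_1,\ldots,I_l$ precisely enough to extract a limit shape in each regime: finiteness follows from the finite set of weights of the representation defining the norm, but continuity and strict positivity of the densities $\alpha_i$ on $\GH$ demand a careful geometric analysis of the skew-balls in each regime, coupled with a dominated-convergence argument to absorb the non-generic $x$ where the $H$-orbit on $\GG$ fails to equidistribute.
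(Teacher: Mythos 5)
Your high-level strategy matches the paper's: both reduce the problem via a duality/unfolding argument to (a) existence of volume ratio limits for skew-balls and (b) equidistribution of $H$-balls in $\GG$, with Ratner/Margulis--Tomanov as the dynamical engine. The paper packages this cleanly as Theorems~\ref{the:duality}, \ref{the:volumeratiolimits}, and \ref{the:Horbit}, and the claim that the density $\alpha_i$ is the volume ratio limit $\lim_{t\in I_i}\frac{m_H(H_tg)}{m_H(H_t)}$ is exactly Theorem~\ref{the:duality}.

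However, two pieces of your sketch misdiagnose the actual difficulties. First, the finite partition $I_1,\dots,I_l$ has nothing to do with different dominant subgroups $H'_i$ ``controlling the bulk of $H_t$'' in different regimes. It comes purely from the discreteness of ultrametric valuations: by Denef's cell decomposition, the volume of a $p$-adic sphere $\{|hg|=q^j\}$ is of the form $cq^{dj}j^e$ only along arithmetic progressions in $j$ of some modulus $N_\nu$, and the partition of $\dr R_{>0}$ is by the residue of $E(\ln_{q_\nu}t)$ modulo $N_\nu$. The explicit example in Section~3.1 of the paper, where $H$ is a single one-parameter unipotent and there are no nontrivial normal subgroups at all, already forces a partition into two pieces. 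Second, ``a standard centralizer/wiggling argument'' substantially understates the work needed for equidistribution of $H$-balls in $\GG$. The paper goes through a full Shah-type argument: a Cartan decomposition $H^{ss}=CDA^+C$, equidistribution of translated ``spheres'' (Proposition~\ref{pro:spheres}), a rather delicate lemma on linear representations (Lemma~\ref{lem:representation}) ensuring that $a_n O_n$ escapes compacta unless the relevant vector is $N_O$-invariant, plus the non-divergence result of Kleinbock--Tomanov applied directionally along one-parameter subgroups of $H^u$ to circumvent the fact that the relevant parameter sets are not cubes. The dominant subgroup hypothesis enters here precisely to rule out the weak-$*$ limit of the $H$-averages being a Haar measure on a proper intermediate orbit $P\G/\G$: the argument uses Chevalley's theorem to produce a point $\eta_P(\g)$ invariant under $N_O$, propagates to $N$-invariance, and then concludes $N_1(P)=G$ from density of $N\G$. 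Without spelling out at least the mechanism of Proposition~\ref{pro:unipotent} and how the dominant hypothesis blocks intermediate $\mathcal F$-subgroups, your sketch of step (b) is not a proof.
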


The partition of the parameter space in a finite number of subspaces is not needed when there is no non-archimedean places as in \cite{goroweiss} but appears  even with very simple examples as soon as ultrametric part is to be taken in consideration. Let us also precise that the densities $\alpha_i$ are explicitely described and effectively computable in examples given afterwards (see theorem \ref{the:duality}).

We present here some examples of applications. Of course one may look at numerous situations. I just present here some variations about linear actions of the special linear group on points or subspaces. I believe that these examples show how to apply the previous theorem to specific situations, using algebraic features such as strong approximation in the special linear group. The proofs are postponed to section \ref{sec:exa}.

\subsubsection{Applications to $\d{SL}(2)$}

Consider the group $G=\d{SL}(2,\dr R)\times \d{SL}(2,\dr Q_p)$ for $p$ a prime number, and fix the lattice $\G=\d{SL}(2,\dr Z[\frac{1}{p}])$. We fix here (for sake of simplicity) the standard euclidean norm $| . |_\infty$ on the matrix algebra $\mathcal M(2,\dr R)$ and the max-norm $|.|_p$ on $\mathcal M(2,\dr Q_p)$. For a point $v$ in $\dr R^2$, we note also $|v|_\infty$ the norm of the matrix whose first column is $v$ and the second one is $0$. We define similarly the norm of a point in $\dr Q_p^2$. We choose a Haar measure $m=m_\infty\otimes m_p$ on $G$.

First we look at the action on the real plane, proving a result similar to Ledrappier's theorem but for the action of matrices in $\G$ subject to congruence conditions on their coefficients modulo $p$:

\begin{appl}\label{appl21}
Let $O$ be a bounded open subset of $\d{SL}(2,\dr Q_p)$. Note $\G^O_T$ the set of elements $\g\in \G$ such that $|\g|_\infty \leq T$ and $\g\in O$ as an element of $\d{SL}(2,\dr Q_p)$. Let $v$ be a point of the plane  $\dr R^2\setminus \{0\}$ with coordinates independant over $\dr Q$.

Then we have the following limit, for any function $\phi$ continuous with compact support in $\dr R^2\setminus \{0\}$:
$$ \frac{1}{T} \sum_{\G_T^O} \phi(\g(v)) \xrightarrow{T\to \infty} \frac{m_p(O)}{m(\GG)|v|_\infty}\int_{\dr R^2} \phi(w)\frac{dw}{|w|_\infty}$$\end{appl}

Another action of $\G$ of interest is on the product of real and $p$-adic planes. A precision : on the $p$-adic plane, we normalize the measure such that it gives mass $1$ to $\dr Z_p^2$. The result is that if your beginning point generates the whole plane among the $\dr Q$-subspaces, then its orbit is dense and you get a distribution result (the function $E$ appearing is the integer part):

\begin{appl}\label{appl22}
Let $(v_\infty,v_p)$ be an element of $(\dr R^2\setminus{0})\times(\dr Q_p^2\setminus{0})$. Suppose that any $\dr Q$-subspace $V$ of $\dr Q^2$ verifying $v_\infty\in V\otimes_{\dr Q} \dr R$ and $v_p \in V\otimes_{\dr Q} \dr Q_p$ is $\dr Q^2$. Denote $\G_T$ the set of elements $\g\in \G$ with $|\g|_\infty\leq T$ and $|\g|_p\leq T$.

Then, for all function $\phi$ continuous with compact support in $(\dr R^2\setminus{0})\times(\dr Q_p^2\setminus{0})$, we have the following limit:
$$\frac{1}{T p^{E(\ln_p(T))}} \sum_{\G_T} \phi(\g v_\infty, \g v_p) \xrightarrow{T\to\infty} \frac{p^2-1}{p^2 m(\GG)|v_\infty|_\infty |v_p|_p} \int_{\dr R^2\times \dr Q_p^2} \phi(v,w) \frac{dv dw}{|w|_\infty |w|_p}$$
\end{appl}

All these results may be extended with the tools presented in the paper for any norm on the matrix algebras and  by considering not only a prime number but a finite number of them.

\subsubsection{Applications to $\d{SL}(n)$}

We look here at a generalization in greater dimension. We consider the action of $\G=\d{SL}(n,\dr Z)$ on the $k$-th exterior power $\Lambda^k(\dr R^n)$, or the space of $k$-planes equipped with a volume. Once again we fix the standard euclidean norm $|.|$ on $\mathcal M(n,\dr R)$, but this time it is necessary to apply our theorem (see section \ref{sec:exa}). We consider also the standard euclidean norm  $|.|$ on $\Lambda^k(\dr R^n)$. And $m$ is a Haar measure on $\d{SL}(n,\dr R)$.
We get:

\begin{appl}\label{appln}
Let $v$ be a non-zero element of $\Lambda^k(\dr R^n)$ such that its corresponding $k$-plane of $\dr R^n$ contains no rational vector. Denote $\G_T$ the set of elements $\g\in \G$ with $|\g|\leq T$.

Then we have a positive real constant $c$ (independant of $\G$ and $v$) such that for all function $\phi$ continuous with compact support on $\Lambda^k(\dr R^n)\setminus \{0\}$:
$$\frac{1}{T^{n^2+k^2-nk-n}}\sum_{\G_T} \phi(\g v) \xrightarrow{T\to\infty} \frac{c}{m(\GG)|v|}\int_{\Lambda^k(\dr R^n)} \phi(v') \frac{dv'}{|v'|}$$
\end{appl}

The $S$-arithmetic generalization of the previous result holds of course. I prefer to postpone its statement and its proof to the section \ref{sec:exa}. Moreover I do not want to multiply here applications but one may think at examples in special unitary groups or Spin groups instead of the special linear one.

\subsection{Organization of the paper}

The organization of the paper is the following : in the next section we work out the so-called duality phenomenon, reducing the stated theorem to two results : a statement on volume of balls in the group and an analog of a result of Shah about equidistribution of balls of $H$ in $\GG$. The third section is devoted to the study of volume of balls, using $p$-adic integration. In the fourth section we review some tools we need to prove the analog of Shah theorem : mainly Ratner theorem for unipotent flows in a $p$-adic setting and several results due to G. Tomanov for polynomial dynamics in $S$-arithmetic homogeneous spaces. The fifth section is the devoted to some technical work. We conclude the proof in the sixth section. Eventually we treat the examples in the last section.

\tableofcontents

\section{Duality}

The duality phenomenon, as used by F. Ledrappier \cite{ledrappier} and A. Gorodnik-B. Weiss \cite{goroweiss}, is a consequence of the following idea : a property of the action of $\G$ on $\GH$ reflects in a property of the action of $H$ on $\GG$. The simplest example is the density of an orbit : $Hg$ has dense orbit under $\G$ in $\GH$ if and only if $g\G$ has dense orbit under $H$ in $\GG$. This consideration leads to the key point in the proof of Ledrappier : instead of looking at the orbit of the lattice $\G$ in the space $\GH$, we prefer to translate the problem in terms of the action of $H$ in $\GG$. And then we may use the precise description of unipotent orbits in the space $\GG$, namely Ratner's theory (cf section \ref{sec:ratner}) to prove some equidistribution results.
However, for asymptotic distribution of points, this phenomenon is not granted and requires additional assumptions we will review in this section.

We may remark that if $H$ is symmetric, Y. Benoist and H. Oh used other techniques - i.e. the mixing property - to study asymptotic distribution of orbits \cite{benoist-oh}.

In \cite[Corollary 2.4]{goroweiss}, Gorodnik and Weiss  presented an axiomatic frame for duality. Unfortunately we cannot use directly their statement as we miss some continuity hypothesis on the distance function - once again the ultrametric part has to be handled specifically, even if the final result holds. So we present a slightly adapted version of their result in the theorem \ref{the:duality}.

In the setting defined in the precedent section, consider an increasing and exhausting family $G_t$ of open bounded subsets in $G$. We need an hypothesis of regularity on this family. We choose to state it using the right action of open subsets of $G$ and asking the sets $G_t$ to be uniformly almost invariant by some open set. As we are interested in the intersections with $H$, the precise (and classical) definition is: 

\begin{definition}
Let $(G_t)_{t\in I}$ be a family of open bounded subsets of $G$. We say that it is \emph{almost (right)-invariant} if for every $\epsilon>0$ one can find on open neighborhood $U_\epsilon$ of $id$ in $G$ such that the two following inequalities hold for every $t \in I$:
\begin{itemize}
\item the set $G_t U_\epsilon$ is not too big with respect to $G_t$ inside $H$:
$$m_H(H\cap G_t U_\epsilon\setminus G_t) \leq \epsilon m_H(H\cap G_t)\; ,$$
\item Not too much points inside $G_t$ are $U_\epsilon$-closed to its complement inside $H$ : $$m_H(H\cap G_t \setminus G_t^cU_\epsilon)\geq (1-\epsilon)m_H(H\cap G_t)\;.$$
\end{itemize} 
\end{definition}
One easily checks that the balls $G_t$ defined by a size function on $G$ are almost invariant. Indeed for the archimedean part, any norm on the matrix algebra is continuous. And for the ultrametric part the max-norm is invariant under some open neighborhood of identity.

We also need a result of existence of limits for ratios of volumes of skew-balls in $H$ (Hypothesis D2 in \cite{goroweiss}). Recall the definition \ref{def:skewball} : for $g\in G$ and $t \in I$, $H_t(g)$ is the set $H\cap G_tg^{-1}$.
\begin{definition}
We say that a family $(G_t)_{t\in I}$ admits \emph{volume ratio limits for $H$} if for all $g$ in $G$ the ratio $\frac{m_H(H_t(g))}{m_H(H_t)}$ admits a limit as $t$ goes to $+\infty$ in $I$.
\end{definition}

The corollary 2.4 of \cite{goroweiss} (and its proof) implies the following one :
\begin{theorem}\label{the:duality}
Let $(G,H,\G)$ be a triple under study. Let $(G_t)_{t\in I}$ be a family of bounded open subsets of $G$ almost invariant, admitting volume ratio limits for $H$ and such that the volumes of $H_t=H\cap G_t$ go to $+\infty$. Assume moreover that the orbit of $H_t$ in $\GG$ becomes equidistributed with respect to $m_\GG$ ; i.e. for all $\phi\in \mathcal C_c(\GG)$, we have: $$\frac{1}{m_H(H_t)}\int_{H_t}\phi(\pi(h))dm_H(h)\xrightarrow[t \in I]{t\to +\infty} \int_\GG\phi dm_\GG\;.$$

Then the orbit of $\G_t=G_t\cap \G$ is distributed in $\GH$ according to a density with respect to $m_\GH$ ; i.e. for all $\psi\in \mathcal C_c(\GH)$, we have: $$\frac{1}{m_H(H_t)}\sum_{\g\in\G_t}\psi(\tau(\g)) \xrightarrow[t \in I]{t\to+\infty} \int_\GH \psi(H.g)\frac{m_H(H_T g)}{m_H(H_T)}dm_\GH(Hg)\;.$$
\end{theorem}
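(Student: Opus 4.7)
I would adapt the duality strategy of \cite[Corollary 2.4]{goroweiss}, the key new feature being that the almost-invariance hypothesis must replace the continuity of a size function used in the Lie group case: the ultrametric balls are clopen, so the boundary of $G_t$ is controlled by almost invariance rather than topologically.

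The central object is the dual sum
$$I_t(g\G)\;:=\;\sum_{\g\in\G}\psi(Hg\g)\,\mathbf{1}_{G_t}(g\g),\qquad g\G\in\GG,$$
which is well-defined on $\GG$ by a change of variable and the $H$-invariance of $\tilde\psi(g):=\psi(Hg)$, compactly supported for each $t$, and satisfies $I_t(e\G)=\sum_{\g\in\G_t}\psi(\tau(\g))$. Unfolding via a fundamental domain of $\G$ and applying Fubini with the local decomposition $m_G=m_H\otimes m_\GH$ identifies $\int_\GG I_t\,dm_\GG$, up to the covolume constant absorbed by the normalization of $m_\GG$, with $\int_\GH\psi(Hg)\,m_H(H_t(g))\,dm_\GH(Hg)$. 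Dividing by $m_H(H_t)$, the volume-ratio-limits hypothesis and dominated convergence (the integrand is supported on $\mathrm{supp}(\psi)$ and the ratios are uniformly bounded there) identify the limit as the right-hand side of the theorem.

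It therefore suffices to prove the asymptotic identity $I_t(e\G)=\int_\GG I_t\,dm_\GG+o(m_H(H_t))$. Both quantities are bridged by the $H_t$-average
$$M_t\;:=\;\frac{1}{m_H(H_t)}\int_{H_t}I_t(\pi(h))\,dm_H(h) \;=\;\frac{1}{m_H(H_t)}\sum_{\g\in\G}\psi(H\g)\,m_H(H_t\cap G_t\g^{-1}),$$
the second equality being Fubini. On one side, sandwich $I_t$ between continuous approximants $I_t^-\le I_t\le I_t^+$ built from continuous replacements of $\mathbf{1}_{G_t}$ supported in $G_tU_\epsilon$ and outside $G_t^cU_\epsilon$: the almost-invariance inequalities give $\int_\GG(I_t^+-I_t^-)\,dm_\GG=O(\epsilon\,m_H(H_t)\|\psi\|_\infty)$, and the equidistribution hypothesis applied to $I_t^\pm$ matches $M_t$ with $\int_\GG I_t\,dm_\GG/m_H(H_t)$ up to $O(\epsilon)\|\psi\|_\infty$. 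On the other side, a direct boundary-layer estimate --- the number of lattice points $\g\in\G$ in $HK\cap(G_tU_\epsilon\setminus G_t)$ (with $K$ a compact lift of $\mathrm{supp}(\psi)$) is $O(\epsilon m_H(H_t))$, by Fubini on the $m_H\otimes m_\GH$ product --- combined with the volume-ratio-limits hypothesis, yields $M_t=I_t(e\G)/m_H(H_t)+o(1)$. Combining and letting first $t\to\infty$ in $I$, then $\epsilon\to 0$, completes the proof.

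The main technical obstacle is that the continuous approximants $I_t^\pm$ themselves depend on $t$, so the equidistribution hypothesis --- stated for a \emph{fixed} continuous test function on $\GG$ --- cannot be applied verbatim. The workaround is to decompose $I_t^\pm$ as a $\psi$-weighted sum indexed by the $H$-orbits of $\g\in\G$ whose image lies in $\mathrm{supp}(\psi)$, each summand being a fixed continuous function on $\GG$ times a scalar depending on $t$ and $\g$ whose behavior is controlled uniformly via volume-ratio-limits; the tail is absorbed into the almost-invariance error. This is the adaptation of Gorodnik and Weiss's construction to the $S$-arithmetic setting, and the almost-invariance hypothesis is precisely what is needed for the boundary-layer control in the absence of a continuous norm at the ultrametric places.
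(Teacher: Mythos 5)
Your outline has the right skeleton — unfold $\int_\GG I_t\,dm_\GG$ to $\int_\GH\psi(Hg)\,m_H(H_t(g))\,dm_\GH(Hg)$, control boundary layers by almost-invariance rather than by continuity of a distance function, and close with the volume-ratio-limits hypothesis — and this is exactly the adaptation of \cite[Parts 3--4]{goroweiss} that the paper invokes. However, the bridge you build between $I_t(e\G)$ and the unfolded integral is not the Gorodnik--Weiss bridge, and the obstacle you flag (the $t$-dependence of $I_t^\pm$) is an artefact of your bridge rather than a genuine feature of the argument you are meant to be adapting.

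In the Gorodnik--Weiss mechanism one never feeds $I_t$, $I_t^\pm$, or any other $t$-dependent object to the equidistribution hypothesis. One fixes a small symmetric neighbourhood $U_\epsilon$ of the identity, a nonnegative bump $\xi$ supported in $U_\epsilon$ with $\int_G\xi\,dm_G=1$, and its periodization $\tilde\xi\in\mathcal C_c(\GG)$. Writing $F_t=I_t/m_H(H_t)$ and $F_t^\pm$ for the analogous sums built from $\mathbf 1_{G_t U_\epsilon}$ and $\mathbf 1_{G_t\setminus G_t^c U_\epsilon}$, the pointwise sandwich $F_t^-(u\G)-O(\epsilon)\le F_t(e\G)\le F_t^+(u\G)+O(\epsilon)$ holds for every $u\in U_\epsilon$ (uniform continuity of $\psi\circ\tau$ plus the inclusion $u^{-1}G_t\subset G_t U_\epsilon$ and its analogue from below). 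Integrating this sandwich against $\tilde\xi$ gives $F_t(e\G)\le\int_\GG F_t^+\tilde\xi\,dm_\GG+O(\epsilon)$ and the matching lower bound. After unfolding, $\int_\GG F_t^\pm\tilde\xi\,dm_\GG=\frac{1}{m_H(H_t)}\int_\GH\psi(Hg)\int_{H_t^\pm(g)}\tilde\xi(\pi(hg))\,dm_H(h)\,dm_\GH(Hg)$, and the equidistribution hypothesis is applied to the \emph{fixed} function $\tilde\xi$, at the varying basepoint $g\G$ with $Hg\in\d{supp}(\psi)$, to get $\int_{H_t^\pm(g)}\tilde\xi(\pi(hg))\,dm_H(h)\sim m_H(H_t^\pm(g))$. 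The almost-invariance inequalities then give $m_H(H_t^+(g))\le(1+O(\epsilon))m_H(H_t(g))$ etc., and the volume-ratio-limits hypothesis with dominated convergence closes the argument. Nothing $t$-dependent is ever tested against the equidistribution hypothesis, and no boundary-layer lattice count is estimated directly.

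By contrast, your bridge through $M_t=\frac{1}{m_H(H_t)}\int_{H_t}I_t(\pi(h))\,dm_H(h)$ requires applying equidistribution to $I_t^\pm$ — which, as you say yourself, the hypothesis does not license — and the repair you sketch (decomposing $I_t^\pm$ along $H$-orbits of $\g$ with $\tau(\g)\in\d{supp}(\psi)$) does not obviously produce a sum of fixed continuous functions on $\GG$: the individual summands $g\G\mapsto\psi(Hg\g)\chi_t^\pm(g\g)$ are not $\G$-periodic, and the number of relevant $\g$'s grows without bound with $t$, so there is no uniformity to exploit. Similarly, the claim that the number of lattice points in $HK\cap(G_tU_\epsilon\setminus G_t)$ is $O(\epsilon\,m_H(H_t))$ "by Fubini on the $m_H\otimes m_\GH$ product" is a counting estimate of precisely the type the duality argument is designed to sidestep; Fubini controls measures, not lattice-point counts, and turning the one into the other at this stage would require a separate argument that you would have to supply. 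Both of these difficulties evaporate once you replace the average of $I_t$ over $\pi_*(\mathbf 1_{H_t}m_H)$ by an integral of the sandwich against the fixed localizing bump $\tilde\xi$, which is the actual content of the Gorodnik--Weiss step the paper refers to.
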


Particularly the density of the limit measure is described as limit ratio of volumes of balls. We will see in the next section a proof of existence of these ratios. But we will not in this paper go into precise and general estimates of these volumes. Our theorem still benefits of these estimation when available, e.g. in the applications (see section \ref{sec:exa}). Maucourant \cite{maucourant} get very precise estimations for $H$ real semisimple.

\begin{proof}
The proof is the same as \cite[Part 3 and 4]{goroweiss}: the almost invariance replacing the hypothesis of right continuity of the distance function in their paper.
\end{proof}

Now we have to understand the right setting to apply this theorem. There are two difficulties : the existence of volume ratio limits and the equidistribution of $H$-orbits in $\GG$. The next section address the first problem. We will prove the following theorem:
\begin{theorem}\label{the:volumeratiolimits}
Let $(G,H,\G)$ be a triple under study, $D$ a size function on $G$. Consider $(G_t)_{t\in \dr R}$ the family of balls for $F$. Suppose that the volume of $H_t$ goes to $+\infty$. 

Then there exists a finite partition of $\dr R$ in unbounded subsets $I_1,\ldots,\,I_k$ such that for all $1\leq l\leq k$ the family $(G_t)_{t\in I_l}$ admits volume ratio limits for $H$.
\end{theorem}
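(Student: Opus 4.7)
The strategy is to exploit the product structure of $H=\prod_{\nu\in S}H_\nu$ and of the balls $G_t$ (the latter because $D$ is a max of place-wise norms), which yields $H_t=\prod_\nu H_{\nu,t}$ and $H_t(g)=\prod_\nu H_{\nu,t}(g_\nu)$ and hence the multiplicative factorisation
\[
\frac{m_H(H_t(g))}{m_H(H_t)} \;=\; \prod_{\nu\in S}\frac{m_{H_\nu}(H_{\nu,t}(g_\nu))}{m_{H_\nu}(H_{\nu,t})}.
\]
It then suffices to find a finite partition of $\dr R$ on which every local factor has a limit at infinity, for all $g\in G$.

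At each place $\nu$ I plan to use the decomposition $H_\nu=H_\nu^{ss}\rtimes H_\nu^u$ together with the axiom that $|h_\nu|_\nu$ is jointly increasing in $|h_\nu^{ss}|_\nu$ and $|h_\nu^u|_\nu$ to express $m_{H_\nu}(H_{\nu,t}(g_\nu))$ as a double integral over the semisimple and unipotent radii. For archimedean $\nu$, the Cartan decomposition of the semisimple factor supplies a homogeneous radial density and a scaling argument on the unipotent (vector-space) factor yields a continuous polynomial asymptotic $c_\nu(g_\nu)t^{\alpha_\nu}$; the local ratio admits an honest limit $c_\nu(g_\nu)/c_\nu(e)$ and contributes nothing to the partition. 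For ultrametric $\nu$ the norm takes values in $q_\nu^{\dr Z}$, so the volume is a step function of $t$ with jumps at powers of $q_\nu$. Applying $p$-adic integration (Denef-style rationality, via the $p$-adic Cartan decomposition of $H_\nu^{ss}$ and a $\dr Z_{q_\nu}$-lattice filtration of $H_\nu^u$) should show that for $n$ large the function $n\mapsto m_{H_\nu}(\{h:|hg_\nu|_\nu\le q_\nu^n\})$ is a finite $\dr R$-linear combination of terms $n^k\,q_\nu^{n\beta}$, the exponents $\beta$ and multiplicities $k$ belonging to a finite set determined by $\nu$ but independent of $g_\nu$.

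Plugging the local expansions into the product, on each interval between consecutive jumps of the ultrametric factors the global ratio becomes a function of $t$ depending only on the integer labels $n_\nu(t)=\lfloor\log_{q_\nu}t\rfloor$. The common leading phase factors $q_\nu^{-\{\log_{q_\nu}t\}\alpha_{\nu,0}}$ cancel between numerator and denominator, and the strictly subdominant exponentials die; the surviving contributions depend on $t$ only through the residues of $(n_\nu(t))_\nu$ modulo the finite common multiple of the integer data appearing in the rational-function descriptions of the ultrametric volumes. Grouping together those $t\in\dr R$ with a fixed residue vector yields the desired finite partition $\dr R=I_1\sqcup\cdots\sqcup I_k$ into unbounded subsets, on each of which the ratio has a limit. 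The hard part will be the ultrametric local step: establishing rigorously that the $p$-adic skew-ball volume is described, for large radius, by an exponential-polynomial sum with a uniformly bounded set of exponents and multiplicities, and then verifying that the coarse grouping by residues is fine enough to stabilise the combined $S$-adic ratio.
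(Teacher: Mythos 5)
Your overall architecture matches the paper's: factor the ratio over places, extract archimedean and ultrametric asymptotics, note that the ultrametric volume is a periodic exponential-polynomial in the integer radius, and partition $\dr R$ by residues of $\lfloor\log_{q_\nu}t\rfloor$ modulo the periods $N_\nu$. But there are two gaps, one of which is the load-bearing step of the paper's argument.

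First, your archimedean step is too optimistic. You propose to get $m_{H_\nu}(H_{\nu,t}(g_\nu))\sim c_\nu(g_\nu)t^{\alpha_\nu}$ by Cartan decomposition and scaling, with no logarithmic corrections. For a general semisimple $H_\nu$, a general algebraic norm and a general representation, this is false: the growth rate generically carries a factor $(\ln t)^{b}$ (this happens for instance for norm balls in $\mathrm{SL}_2(\dr R)$ in the standard representation). The paper therefore invokes the Benoist--Oh result (Proposition~\ref{pro:benoist-oh}), which rests on resolution of singularities, to obtain asymptotics on the scale $t^{a}(\ln t)^{b}$. You cannot replace this by an elementary scaling argument; you should just cite it as a black box.

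Second, and more seriously, your conclusion that ``the common leading phase factors $q_\nu^{-\{\log_{q_\nu}t\}\alpha_{\nu,0}}$ cancel between numerator and denominator'' silently assumes that the \emph{same} leading exponents $\alpha_{\nu,0}$ (and, archimedeanly, the same $a$ and $b$) occur for $g$ and for $e$. Denef's theorem gives you, for each $g$, an exponential-polynomial description whose exponent set lies in a finite set bounded independently of $g$; it does \emph{not} tell you that the leading exponent for the skew-ball equals the leading exponent for the ball. If they differed, the ratio on each residue class would tend to $0$ or $+\infty$ rather than to a finite limit. The paper closes this gap by a separate lemma: since left-translation by $g$ is bi-Lipschitz for the size function (there are $A,B>0$ with $A\,D(h)\le D(hg)\le B\,D(h)$), one has $H_{At}\subset H_t(g)\subset H_{Bt}$, and then the coarse two-sided bound $m_H(H_t(g))\asymp t^{e}e^{dt}$ from the asymptotic expansion forces the ratio $m_H(H_t(g))/m_H(H_t)$ to be bounded above and below. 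Boundedness then pins the leading exponents of numerator and denominator to be identical on each piece of the partition, after which the ratio of leading constants is the limit. Without this bi-Lipschitz observation and the resulting boundedness of the ratio, the ``cancellation'' you invoke is unjustified, and the proposal as written does not prove convergence.
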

We shall exhibit in the following section a very simple example showing that we really need this partition.

The second part of the paper is to prove the equidistribution property under the hypothesis of theorem \ref{the:normball}: $H$ is a semidirect product of a semisimple and a unipotent groups and every dominant subgroup has dense orbit in $\GG$. We will prove in section \ref{section:equidistribution} the following theorem:
\begin{theorem}\label{the:Horbit}
Let $(G,H,\G)$ be a triple under study, $D$ a size function and $H_t$ the induced family of balls in $H$. Assume that every dominant subgroup $H'$ of $H$ has dense orbit in $\GG$.

Then the orbits of $H_t$ becomes equidistributed in $\GG$ with respect to $m_\GG$ ; i.e. for all $\phi\in \mathcal C_c(\GG)$, we have: $$\frac{1}{m_H(H_t)}\int_{H_t}\phi(\pi(h))dm_H(h)\xrightarrow{t\to +\infty} \int_\GG\phi dm_\GG\;.$$
\end{theorem}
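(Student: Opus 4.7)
The plan is to adapt the classical Ratner--Dani--Margulis--Shah strategy for equidistribution of translated orbits to this $S$-arithmetic setting, using the tools promised in Section \ref{sec:ratner} (Ratner's measure classification over local fields, Tomanov's results on polynomial dynamics, and Dani--Margulis non-divergence). I would set
\[
\mu_t := \frac{1}{m_H(H_t)}\,\pi_{*}\bigl(\mathbf{1}_{H_t}\,m_H\bigr),
\]
a family of probability measures on $\GG$, and prove that $\mu_t \to m_\GG$ weakly by showing that every weak-$*$ accumulation point equals $m_\GG$. The first step is a tightness argument: the $S$-arithmetic non-divergence estimate of Kleinbock--Tomanov, applied to the polynomial structure induced by the unipotent radical $H^u$ and the algebraic size function on $H^{ss}$, ensures that no mass escapes to infinity, so any accumulation point $\mu$ is a probability measure on $\GG$.

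The second step is to prove that any such $\mu$ is invariant under a large enough subgroup. The almost-invariance of $(G_t)$ in $G$ transfers to almost-invariance of $H_t$ under an open neighborhood of identity in $H$, and a standard Folner-type averaging argument along the unipotent radical $H^u$---exploiting the polynomial growth of $H^u_t$ inside $H_t$ and the semidirect decomposition $H=H^{ss}\rtimes H^u$---shows that $\mu$ is $H^u$-invariant. Since $H^{ss}$ is semisimple (in particular generated by one-parameter unipotent subgroups) and normalizes $H^u$, iterating the same averaging argument along one-parameter unipotent subgroups of $H^{ss}$ should promote $\mu$ to invariance under a subgroup $U\subset H$ that captures every unipotent one-parameter subgroup contained in $H$.

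The final step is to apply the $S$-arithmetic Ratner--Margulis--Tomanov measure classification to $\mu$: it decomposes as a countable convex combination of algebraic probability measures supported on closed orbits $Lg\G$ of closed subgroups $L\supseteq U$. Suppose for contradiction that some $L\subsetneq G$ carries positive $\mu$-mass. A linearization argument \textit{\`a la} Dani--Margulis, in its $S$-arithmetic form, combined with Tomanov's polynomial dynamics, then shows that a positive-density proportion of $H_t$ must lie, after translation, inside a conjugate of $L$; the resulting intersection $H'$ is a proper normal subgroup of $H$ satisfying $m_H(C\cdot H'_t)/m_H(H_t)\not\to 0$ for some compact $C\subset H$, that is, $H'$ is dominant in $H$. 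The hypothesis of the theorem then forces $H'\G$ to be dense in $G$, contradicting the closedness of the $L$-orbit. Hence the only $L$ appearing is $G$ itself and $\mu=m_\GG$. I expect this last step to be the main obstacle: extracting a genuinely dominant subgroup of $H$ from the concentration of orbit mass on a proper closed $L$-orbit relies crucially on the algebraic form of the size function and on the ``orthogonality'' between $H^{ss}$ and $H^u$ built into its definition, without which the comparison of volumes of balls in $H$ and in candidate subgroups would break down.
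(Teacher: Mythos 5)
Your overall toolkit is the right one (Ratner--Margulis--Tomanov classification, Kleinbock--Tomanov non-divergence, Tomanov's $S$-arithmetic linearization, ending with the dominant-subgroup hypothesis), and the paper does indeed follow the Shah strategy you invoke. However, your proposed mechanism for obtaining invariance of a weak limit is fundamentally flawed at a key point, and that flaw cascades through the rest of the argument.

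The crucial gap is in your second step. Averaging along the unipotent radical $H^u$ via a F{\o}lner argument is plausible (though already delicate, because in $H=H^{ss}\rtimes H^u$ the fibers $\{u\in H^u\st h^{ss}u\in H_t\}$ have radius depending on $h^{ss}$, so $H_t$ is not a clean product). But your claim that one can then ``iterate the same averaging argument along one-parameter unipotent subgroups of $H^{ss}$'' cannot work: $H^{ss}$ is semisimple, hence non-amenable, and norm balls in $H^{ss}$ grow exponentially (cf.\ Lemma \ref{equivalentvolume}), so they are emphatically not a F{\o}lner sequence. Translating $H^{ss}_t$ by a fixed unipotent $u\in H^{ss}$ produces a symmetric difference of volume comparable to $H^{ss}_t$ itself. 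There is therefore no direct averaging route to unipotent invariance of a weak limit of $\pi_*\bigl(\tfrac{1}{m_H(H_t)}\mathbf 1_{H_t}m_H\bigr)$. A related problem afflicts your non-divergence step: Kleinbock--Tomanov controls mass loss for \emph{polynomial} trajectories, but the Cartan parametrization of $H^{ss}_t$ is exponential, so you cannot apply it to $H_t$ as a whole.

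The paper circumvents both problems by never working with the full ball measure directly. It fixes a Cartan decomposition $H^{ss}=CDA^+C$, decomposes $H_t$ into ``spheres'' $\d{Supp}((ca)_*\mu_O,\,t')$ indexed by $ca\in CDA^+$, and reduces to Proposition \ref{pro:spheres} and ultimately Proposition \ref{pro:unipotent}. There, after passing to a simplified subsequence $a_n\in A^+$, unipotent invariance of the weak limit is obtained not by averaging but from the \emph{contracting} action of conjugation: when $a_n$ is unbounded, $a_n^{-1}u^+a_n\to e$ for $u^+\in U^+$ forces $U^+$-invariance (Step 2, Case 2), and when $a_n$ is bounded one gets invariance under the center of $H^u$ via the explicit ball geometry (Step 2, Case 1). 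Non-divergence is likewise applied to the polynomial parametrization of $U^+\times H^u$ only, after conjugating by $a_n$, and is even applied ``directionally'' along one-parameter subgroups $O\subset H^u$ because the relevant parameter sets are not cubes. The final classification step (your step three) then also runs sphere-by-sphere, and the identification of the obstruction with a dominant subgroup happens at the level of Corollary \ref{coro:spheres} and the integration over the Cartan decomposition, not by directly extracting a dominant subgroup from a closed orbit with positive $\mu$-mass. You correctly anticipate that this last identification is where the orthogonality hypothesis on the size function matters, but without the sphere decomposition the argument does not get that far.
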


Theorem \ref{the:normball} is then a direct consequence of the three previous results.

\section{Asymptotic developments of volumes}

\subsection{An example}

The following part is a little bit technical and may be misunderstood without any example in mind. Let us show on a very simple example that we have to be careful in describing the asymptotics of volumes of balls.

We will take here $G=\d{SL}(3,\dr R)\times \d{SL}(3,\dr Q_p)$ for some prime $p$ and $H$ the image under the adjoint representation of $\d{SL}(2)$ of the upper triangular nilpotent subgroup:
$$H=\left\{h(t_\infty,t_p)=\left( \begin{pmatrix} 1 & 2t_\infty &t_\infty^2 \\ 0& 1 &t_\infty \\ 0&0&1  \end{pmatrix}, \begin{pmatrix} 1 & 2t_p &t_p^2 \\ 0& 1 &t_p \\ 0&0&1  \end{pmatrix}\right) \; ; \; t_\infty \in \dr R \d{ and } t_p \in\dr Q_p\right\}$$

We choose the max-norm on both $\mathcal M_3(\dr R)$ and $\mathcal M_3(\dr Q_p)$ such that: $$H_{p^n}=\left\{h(s_\infty,s_p) \d{ for } s_\infty \in \dr R\d{ with }|s_\infty^2|\leq p^n\d{ and }s_p \in \dr Q_p\d{ with }|s_p^2|_p\leq p^n\right\}\; .$$
Hence the volume of $H_{p^n}$ is equal to $p^{\frac{n}{2}+E(\frac{n}{2})}$ ($E$ is the \emph{integer part}).

Now let us have a look on a specific skew-ball : $H_{p^n}(Id,\begin{pmatrix} p & 0 & 0\\ 0&1&0\\0&0&p^{-1}\end{pmatrix})$, and we note $g=(Id,\begin{pmatrix} p & 0 & 0\\ 0&1&0\\0&0&p^{-1}\end{pmatrix})$. Then the skew-ball is described by: $$H_{p^n}(g)=\left\{ h(s_\infty,s_p) \d{ for } |s_\infty|^2\leq p^n \d{ and }|p^{-1} s_p^2|_p \leq p^n \right\}\; ,$$ hence its volume $m_H(H_{p^n}(g))$ is equal to $p^{\frac{n}{2}+E(\frac{n-1}{2})}$.
We see that the ratio $\frac{m_H(H_{p^n}(g))}{m_H(H_{p^n})}$ is equal to $p^{E(\frac{n}{2})-E(\frac{n-1}{2})}$. This sequence does not admit any limit as $n$ goes to $\infty$. But we can split it in two subsequences: $n$ odd or even. And then both subsequences admit a limit (respectively $p$ and $1$).

Keeping this example in mind we will now explain why we are always able to do this: split the space of parameters $t$ in a finite number of subspaces in which the hypothesis of admitting volume ratio limits is fulfilled.

\subsection{Volume ratio limits}

We will prove here the theorem \ref{the:volumeratiolimits} stated above. We will use the fact that if two functions have an asymptotic development on the same (reasonnable) scale and their ratio is bounded, then this ratio admits a limit.

In order to get this asymptotic behaviour, we use the algebraic hypothesis on the norm. Then, following Benoist-Oh \cite[Part 16]{benoist-oh}, we get the wanted result as consequence of resolution of singularities in the archimedean case and Denef's Cell decompostion theorem in the non-archimedean one. These result are the two following propositions:

\begin{proposition}[Benoist-Oh, \cite{benoist-oh} Proposition 7.2]\label{pro:benoist-oh}
Let $H$ be the group of $\dr R$-points of an algebraic $\dr R$-group, $\rho\; :\; H \to GL(V)$ a $\dr R$-representation of $H$, $m_H$ the Haar measure on $H$ and $|\,.\,|$ an algebraic norm on $\d{End}(V)$.

Then, for all $g\in \d{GL}(V)$, the volume $m_H(H_t(g))=m_H\{h\in H\, |\rho (h)g|\leq t\}$ has an asymptotic development on the scale $t^a ln(t)^b$ with $a\in \dr Q^+$ and $b\in \dr N$.
\end{proposition}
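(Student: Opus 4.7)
The approach I would take is the standard resolution-of-singularities plus Mellin-transform recipe for volumes of algebraic sublevel sets. First I would reduce the problem to the study of $F(T)=m_H\{h\in H\,:\, f(h)\leq T\}$ where $f(h):=|\rho(h)g|^{2k}$, with $k$ chosen so that $|\,.\,|^{2k}$ is polynomial on $\d{End}(V)$ by algebraicity of the norm. Then $f$ is a regular function on the affine $\dr R$-variety $H$ and $m_H(H_t(g))=F(t^{2k})$, so an asymptotic development of $F$ on the scale $T^a\ln(T)^b$ transfers to one for $m_H(H_t(g))$ on the same type of scale.

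Next I would study the zeta-type integral $Z(s)=\int_H f(h)^{-s}\,dm_H(h)$, which converges absolutely for $\Re(s)$ large enough. Choosing a smooth projective compactification $\overline H$ of $H$ and applying Hironaka's theorem, one obtains a smooth variety $\widetilde H$ and a proper birational morphism $\pi:\widetilde H\to \overline H$ such that the pullbacks of $f$ and of the boundary divisor $\overline H\setminus H$ together form a simple normal crossings divisor, and such that the pullback of $m_H$ is a smooth density times a monomial in suitable local coordinates along this divisor.

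Covering $\widetilde H$ by finitely many coordinate charts and using a partition of unity, the integral $Z(s)$ decomposes as a finite sum of local monomial integrals of the form $\int_U \prod_i |x_i|^{N_i s+m_i-1}\,\varphi(x)\,dx$ with $N_i\in \dr N$, $m_i\in\dr Q$ and $\varphi$ smooth and compactly supported. Each such integral is, up to a smooth factor, a product of beta-type integrals, hence extends meromorphically to $\dr C$ with poles of order at most $\dim H$ located on an arithmetic progression of rational points of $\dr R$. A Tauberian theorem (Wiener--Ikehara, in the version allowing poles of arbitrary finite order) then translates the meromorphic structure of $Z(s)$ into an asymptotic development of $F(T)$ on the scale $T^a\ln(T)^b$ with $a\in\dr Q^+$ and $b\in\dr N$: a pole of $Z$ at $s=a$ of order $b+1$ contributes a term of size $T^a\ln(T)^b$.

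The main obstacle is the non-compactness of $H$. Since $Z(s)$ is integrated over an unbounded set, resolving only the zero locus of $f$ inside $H$ is not enough; one must simultaneously resolve the behaviour of $f$ and of $m_H$ at the boundary of the compactification $\overline H$, and verify that $m_H$ extends as a rational differential form whose orders of pole along each boundary component are controlled well enough that the local monomial integrals produced above all converge for $\Re(s)$ large, so that the Tauberian step applies unambiguously and yields rational exponents $a$.
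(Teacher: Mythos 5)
The paper gives no proof of this proposition: it is quoted verbatim from Benoist--Oh, with the one-line remark preceding it that the archimedean case is ``a consequence of resolution of singularities.'' Your strategy --- form the Mellin-type integral $Z(s)=\int_H f(h)^{-s}dm_H$, compactify $H$, apply Hironaka to put $f$ and the boundary in simple normal crossings position, obtain meromorphic continuation with poles on a rational arithmetic progression, and finish with a Tauberian argument --- is precisely the mechanism underlying the cited Benoist--Oh Proposition 7.2, so the route you propose is the right one and matches what the paper relies on. You are also right that the real work is in the non-compactness of $H$: the resolution must simultaneously control the pull-back of the Haar density along all boundary divisors.

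There is, however, a step that would not survive scrutiny as written. You claim that algebraicity of the norm lets you choose $k$ so that $|\,\cdot\,|^{2k}$ is a polynomial on $\d{End}(V)$. With the paper's definition of a size function (the norm is an $L^p$-norm in some basis, $p\in\dr N^*\cup\{\infty\}$), this is false except when $p$ is even: no power of $|x|=\sum_i|x_i|$ is polynomial, and no power of $|x|=\max_i|x_i|$ is polynomial. So $f$ is in general only semialgebraic, not a regular function on the affine variety $H$, and the statement that Hironaka makes ``the pullback of $f$'' a normal-crossings divisor does not apply directly. The standard repair is either to work in the semialgebraic/subanalytic category throughout (using a resolution theorem for such data, or an o-minimal cell decomposition), or to first partition $\d{End}(V)$ into finitely many semialgebraic cones on each of which the chosen $L^p$-norm coincides with a genuine polynomial --- for example, by signs of coordinates for $p$ odd, or by which coordinate attains the maximum for $p=\infty$ --- and then run your zeta-function argument on each piece separately before summing. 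With that modification the rest of the proposal (local monomial integrals, meromorphic continuation with poles of order at most $\dim H$ along rational arithmetic progressions, Tauberian inversion giving $t^a(\ln t)^b$ terms) goes through as you describe.
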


For the ultrametric part, we do not get exactly an asymptotic development rather a finite number of asymptotic developments. This was already noted in \cite{benoist-oh} but we need here a slightly more precise result, namely a uniformity on the number of simple functions needed: 
\begin{proposition}[Benoist-Oh]\label{the:benoist-oh}
Let $k$ be a finite extension of $\dr Q_p$, $q$ be the norm of an uniformizer, $H$ the group of $k$-points of an algebraic $k$-group, $\rho\; :\; H \to GL(V)$a $k$-representation of $H$, $m_H$ the Haar measure on $H$ and $|\,.\,|$ a $\d{max}$-norm on $\d{End}(V)$. Let $S_t(g)$ be the sphere of radius $t$ : $S_t(g):=\{h\in H\st |hg|=t\}$.

Then there exist $N_0$ an integer such that for all $g\in G$ and for each $0\leq j_0\leq N_0$ one of the following holds:
\begin{enumerate}
\item $S_{q^j}(g)$ is empty for all $j=j_0 \d{ mod }N_0$.
\item There exist $d_{j_0}\in \dr Q_{\geq 0}$, $e_{j_0}$ an integer and $c_{j_0}>0$ such that $m_H(S_{q^j}(g))\sim c_{j_0} q^{d_{j_0} j} j^{e_{j_0}}$ for all $j=j_0 \d{ mod }N_0$.
\end{enumerate}
\end{proposition}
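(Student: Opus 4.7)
The plan is to realize $h\mapsto|\rho(h)g|$ as a piecewise log-linear function on $H$ in valuation terms via Denef's $p$-adic cell decomposition theorem, express the spherical volumes as finite sums of Igusa-type Poincar\'e series, and read off their asymptotics along arithmetic progressions in $j$. The key novelty over the original Benoist--Oh statement is to extract the modulus $N_0$ uniformly in $g$.

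First, I would fix local $k$-analytic coordinates on $H$ identifying it, outside a null set, with an open subset of $k^d$ where $d=\dim H$. In the fixed basis of $V$ in which $|\,.\,|$ is the max-norm, each entry of the matrix $\rho(h)g$ is a polynomial $P_i(h;g)$ in the coordinates of $h$, of degree bounded by the representation, whose coefficients are $k$-linear in those of $g$. Thus $|\rho(h)g|=\max_i|P_i(h;g)|$, while $m_H$ is a polynomial density times the standard Haar measure on $k^d$.

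Next, I would apply Denef's cell decomposition to the finite family $\{P_i(\cdot\,;g)\}$ viewed as a definable family with parameter $g$. This partitions $H$ into a bounded number of cells $C_\alpha$ on which $v(P_i(h;g))=\ell_{\alpha,i}(n)+v(u_{\alpha,i}(h;g))$ for an integer affine form $\ell_{\alpha,i}$ evaluated on the valuations $n=(n_1,\dots,n_r)$ of the ``center'' coordinates, and a unit $u_{\alpha,i}$. A further subdivision, bounded in terms of $\dim V$, according to which index attains the maximum, lets me assume $v(|\rho(h)g|)=L_\alpha(n)+O(1)$ for a single integer affine form $L_\alpha$ on each subcell. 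Integrating $m_H$ over $C_\alpha\cap S_{q^j}(g)$ then expresses $m_H(S_{q^j}(g))$ as a finite sum of expressions $\sum_{n\in\Lambda_\alpha,\,L_\alpha(n)=-j}q^{-\langle a_\alpha,n\rangle}$, where $\Lambda_\alpha\subset\dr Z^r$ is a rational polyhedral cone and $a_\alpha\in\dr Q^r$ encodes the Jacobian factor of $m_H$ in the chosen coordinates.

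Standard analysis of such fibred Poincar\'e sums shows that along each residue class $j\equiv j_0\pmod{N_\alpha}$, each term either vanishes identically or is asymptotic to $c\,q^{dj}j^e$ for some $d\in\dr Q_{\geq 0}$, $e\in\dr N$ and $c>0$, with $N_\alpha$ depending only on $L_\alpha$ and $\Lambda_\alpha$. Setting $N_0$ to be the lcm of the $N_\alpha$ over all subcells and summing then yields the claimed asymptotic of $m_H(S_{q^j}(g))$ along each progression $j\equiv j_0\pmod{N_0}$ (or the vanishing alternative, giving an empty sphere). The main obstacle is the uniformity in $g$: one has to guarantee that the number and combinatorial shape of the cells, together with the forms $L_\alpha$ and cones $\Lambda_\alpha$, are independent of $g$. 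This follows from the parametric version of Denef's theorem (or equivalently from Denef--Pas quantifier elimination), since the $P_i$ have fixed monomial shape with coefficients linear in $g$; only the centers, units and leading constants $c_{j_0}$ inherit $g$-dependence. Once this uniformity is secured, the rest of the argument reduces to the Poincar\'e-series computation already carried out by Benoist--Oh for the non-uniform case.
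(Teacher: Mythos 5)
Your proposal is correct and follows essentially the same route as the paper: both invoke Denef's $p$-adic cell decomposition / rationality theorem applied to the definable family parametrized by $g$, write $m_H(S_{q^j}(g))$ as a finite sum of simple Igusa-type terms, and obtain the uniform modulus $N_0$ from the uniform bound on the number of arithmetic progressions in Denef's statement. The paper simply states this more tersely by citing \cite[Corollary 16.7]{benoist-oh} together with \cite[Theorem 3.1 and the remark following it]{denef} for the parametric uniformity, whereas you unroll what sits inside Denef's theorem (coordinates, cells, polyhedral cones, Poincar\'e sums); the underlying argument is the same.
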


\begin{proof}
I will not go into details as the proof is  the same as \cite[Corollary 16.7]{benoist-oh}. I will just say that applying a theorem of Denef \cite[Theorem 3.1 and remark below]{denef}, we get the following: 

for any polynomial map $f(x,\lambda)$ from $\dr Q_p^{m+d}$ to some $GL(V)$, for any semialgebraic measure $\mu$ on a semialgebraic set $S\subset \dr Q_p^m$, there are some functions $\gamma_i(\lambda,n)$ and $\beta_i(\lambda,n)$ for $1\leq i\leq e$ such that  the measure $I(\lambda,n)$  of the set of element $x\in S$ with $|f(x,\lambda)|=q^n$ is of the form :
$$I(\lambda,n)=\sum_{i=1}^e \gamma_i(\lambda,n)p^{\beta_i(\lambda,n)}$$

Moreover the functions $\gamma_i$ and $\beta_i$ are simple in the following sense: for any of these functions (hereafter denoted $\alpha$) there exists an integer $N$ such that for all $\lambda$, the map $n\mapsto \alpha(\lambda,n)$ is affine along at most $N$ arithmetic progressions in $\dr N$ which cover $\dr N$ up to a finite set.

\smallskip

Now, the above proposition is just this result in the case where $S$ is the image under the representation $\rho$ of $H$, $\mu$ is the Haar measure on $H$ and $f(\lambda,x)=\lambda.x$ for $\lambda\in GL(V)$ and $x\in H$.
\end{proof}
\smallskip

We may go on with the proof of theorem \ref{the:volumeratiolimits}.
Let us write more explicitly the informations we get on the function $m_H(H_t(g))$ from this two results. Fix some $g$ in $G$. Consider the set  $S_f$ of finite places in $S$. For each $\nu \in S_f$, we note $q_\nu$ the norm of the uniformiser of $K_\nu$. The previous proposition gives us an integer $N_\nu$ and for all $0\leq j\leq N_\nu-1$ some $d_{\nu,j}\in \dr Q$, $d_{\nu,j}\in \dr N$ and $c_{\nu,j}>0$ describing the volume of spheres in the group $H_\nu$. Moreover for the archimedean part, the proposition \ref{pro:benoist-oh} gives some triple $d_\infty \in \dr Q_{>0}$, $e_\infty \in \dr N$ and $c_\infty >0$ such that the volume of $(H_\infty)_t$ is equivalent to $c_\infty t^{e_\infty} e^{d_\infty t}$. With this data we are able to describe the volume of $H_t$:
\begin{lemma}\label{equivalentvolume}
With the data above, $m_H(H_t(g))$ is equivalent, as $t$ goes to $\infty$, to : 
\begin{eqnarray}\label{for:volume}
c_\infty t^{d_\infty}(\ln\, t)^{E_\infty}\prod_{\nu \in S_f} \left(\sum_{j=0}^{E(\ln_{q_\nu}t)} c_{\nu, j[N_\nu]}q_\nu^{d_{\nu, j[N_\nu]} j} j^{e_{\nu, j[N_\nu]}} \right)\; .
\end{eqnarray}

Moreover, let $\displaystyle d=d_\infty \times \prod_{\nu\in S_f} \d{max}_{0\leq j\leq N_\nu} d_{\nu,j}$ and  $\displaystyle e=e_\infty \times \prod_{\nu\in S_f} \d{max}_{0\leq j\leq N_\nu} e_{\nu,j}$. Then $m_H(H_t(g))$ lies between two constants times $t^e e^{dt}$.
\end{lemma}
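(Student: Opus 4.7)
The plan is to exploit the product structure of the size function $D$: since $D(hg)=\max_{\nu\in S}|h_\nu g_\nu|_\nu$, the condition $D(hg)<t$ factorizes into one condition per place, and hence
$$H_t(g)=\prod_{\nu\in S}H_{\nu,t}(g_\nu)\quad\text{with}\quad H_{\nu,t}(g_\nu):=\{h_\nu\in H_\nu:|h_\nu g_\nu|_\nu<t\}.$$
Fubini, applied via the unimodularity of each $H_\nu$, then reduces the problem to computing $m_{H_\nu}(H_{\nu,t}(g_\nu))$ for each $\nu\in S$ separately. This is the crucial place where the second hypothesis in the definition of a size function (max-norm at finite places, $L_p$-type on a suitable basis at archimedean ones) gets used.

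For the archimedean place, Proposition \ref{pro:benoist-oh} applied to $H_\infty$ and to the restriction of the representation $\rho$ yields directly that $m_{H_\infty}(H_{\infty,t}(g_\infty))$ admits a leading asymptotic of the form $c_\infty t^{d_\infty}(\ln t)^{e_\infty}$. For each finite place $\nu\in S_f$, I would write the ball as a disjoint union of spheres, using that $|\cdot|_\nu$ takes values in $q_\nu^{\dr Z}$:
$$m_{H_\nu}(H_{\nu,t}(g_\nu))=\sum_{j=0}^{E(\ln_{q_\nu}t)}m_{H_\nu}(S_{q_\nu^j}(g_\nu)),$$
then apply Proposition \ref{the:benoist-oh} to each term along an arithmetic progression modulo the period $N_\nu$, replacing the sphere volume by its asymptotic equivalent $c_{\nu,j_0}q_\nu^{d_{\nu,j_0}j}j^{e_{\nu,j_0}}$ (with the convention that empty spheres contribute zero). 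Multiplying the archimedean equivalent by the sums obtained at each $\nu\in S_f$ yields the formula (\ref{for:volume}).

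Finally, the two-sided bound follows by extracting the leading term of each finite-place sum. Each such sum is a linear combination of finitely many geometric-type progressions in $j$, so it is comparable to its largest term, of order $q_\nu^{d_\nu\cdot E(\ln_{q_\nu}t)}(E(\ln_{q_\nu}t))^{e_\nu}\asymp t^{d_\nu}(\ln t)^{e_\nu}$, where $d_\nu=\max_{j_0}d_{\nu,j_0}$ and $e_\nu=\max_{j_0}e_{\nu,j_0}$. Multiplying across all places gives a bound of the expected polynomial-times-logarithm shape with exponents aggregating the archimedean and non-archimedean contributions. The main subtle point, and the reason one only obtains a two-sided bound and not a single asymptotic equivalent, is that the ratio between consecutive terms in each non-archimedean sum can oscillate periodically in $\ln_{q_\nu}t$, mirroring the toy example of the preceding subsection and foreshadowing why Theorem \ref{the:volumeratiolimits} requires a finite partition of the parameter space.
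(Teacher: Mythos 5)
Your proof follows the same approach as the paper's: factor $H_t(g)$ as a product of per-place balls using the $\max$ structure of the size function, apply the archimedean and non-archimedean asymptotics (Propositions \ref{pro:benoist-oh} and \ref{the:benoist-oh}) to each factor, and multiply; you simply spell out the ball-as-union-of-spheres decomposition at finite places, which the paper leaves implicit. The paper's own proof is just as terse on the two-sided bound ("directly comes from the first one"), so your extra remark about comparability of the finite-place sum to its leading term is a welcome elaboration rather than a departure.
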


\begin{proof}
By definition of the size function, the ball $H_t(g)$ is the product for all $\nu$ in $S$ of the balls $(H_\nu)_t(g_\nu)$ in the group $H_\nu$. For each of these balls the two previous theorems give us an equivalent for the volume in $H_\nu$ (all functions are positive so there is no trouble summing equivalent). Now the Haar measure on $H$ is the product of the Haar measures on the $H_\nu$'s. And the formula of the previous theorem is just the product of these equivalences.

The second part directly comes from the first one.
\end{proof}

The following lemma is the last step:
\begin{lemma}
Under the hypothesis of theorem \ref{the:volumeratiolimits} fix an element $g$ in $G$.

Then there exists a constant $c>1$ such that the ratio $\frac{m_H(H_t(g))}{m_H(H_t)}$ lies between $c^{-1}$ and $c$ for all $t$.
\end{lemma}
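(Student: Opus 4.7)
The plan is to show that right-translation by $g$ distorts the size function $D$ only by a bounded multiplicative factor; this will sandwich the skew-ball $H_t(g)$ between two ordinary balls $H_{t/C}$ and $H_{Ct}$, after which the preceding lemma will give that these two balls have volumes comparable to $m_H(H_t)$.

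First I would exploit the (near-)submultiplicativity of the norms defining $D$. At each ultrametric place the chosen max-norm in a basis is exactly submultiplicative, while at each archimedean place the $L_p$-norm is submultiplicative up to a constant by equivalence of norms on the finite-dimensional algebra $\d{End}({\bf V}(K_\nu))$. Combining this with $\rho(hg)=\rho(h)\rho(g)$ and the invertibility of $\rho(g)$, one obtains a constant $C=C(g) \geq 1$ such that
$$C^{-1}\, D(h) \leq D(hg) \leq C\, D(h) \quad \text{for every } h \in H,$$
hence the inclusions $H_{t/C} \subseteq H_t(g) \subseteq H_{Ct}$, and by monotonicity of $m_H$,
$$m_H(H_{t/C}) \leq m_H(H_t(g)) \leq m_H(H_{Ct}).$$

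Next I would apply Lemma \ref{equivalentvolume} to the identity element of $G$ to obtain an equivalent $m_H(H_t) \sim \Phi(t)$, where $\Phi(t)$ is the explicit expression appearing in (\ref{for:volume}). The bracketing above then reduces the claim to showing that, for each fixed $C>0$, the ratio $\Phi(Ct)/\Phi(t)$ stays bounded above and bounded away from zero as $t \to +\infty$.

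The hard part---though really just bookkeeping---will be this scaling estimate. The archimedean factor $c_\infty t^{d_\infty}(\ln t)^{E_\infty}$ plainly scales by a quantity tending to $C^{d_\infty}$. Each ultrametric sum has upper limit $E(\ln_{q_\nu} t)$; replacing $t$ by $Ct$ shifts this upper limit by at most $\lceil \ln_{q_\nu} C \rceil$, i.e.\ adds or removes only a bounded number of terms, and since such a sum is asymptotically dominated by its largest terms (geometric in $j$ whenever some $d_{\nu,j}>0$, polynomial in $j$ otherwise), the perturbation can only change the value by a multiplicative constant. Multiplying over all places of $S$ gives $\Phi(Ct) \asymp \Phi(t)$, from which the desired boundedness of $m_H(H_t(g))/m_H(H_t)$ follows.
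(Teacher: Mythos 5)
Your proof is correct and follows essentially the same route as the paper: a two-sided distortion bound $A\,D(h)\leq D(hg)\leq B\,D(h)$ (the paper gets it from continuity of the right action of $g$ on $\d{End}({\bf V}(K_S))$, you from submultiplicativity of the norms) yields $H_{At}\subseteq H_t(g)\subseteq H_{Bt}$, after which boundedness of $m_H(H_{Bt})/m_H(H_t)$ is what is needed. The only cosmetic difference is the last step, where the paper quotes the second assertion of Lemma \ref{equivalentvolume} while you re-derive the same bounded-ratio scaling directly from formula (\ref{for:volume}).
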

\begin{proof}
The element $g$ acts continuously on the module $\d{End}({\bf V}(K_S))$ (recall that in order to define balls in $G$ we fixed some representation of $\bG$ in a vector space ${\bf V}$). So there are two constants $A$ and $B$ such that we have for all $h$ in $H$ (recall that $D$ denotes the size function) :
$$A.D(h)\leq D(hg) \leq B. D(h)$$
That implies that the set $H_t(g)$ contains $H_{At}$ and is contained in $H_{Bt}$.

But the second part of the previous lemma implies that  the ratios $\frac{m_H(H_{At})}{m_H(H_t)}$ and $\frac{m_H(H_{Bt})}{m_H(H_t)}$ are bounded. Hence we have proven the lemma.
\end{proof}

We now have the tools to proceed with the proof of theorem \ref{the:volumeratiolimits}:
\begin{proof}
Each finite place leads to a finite partition of the space of parameters in the following way: For $\nu \in S_f$ we have $q_\nu$ the norm of the uniformizer and the integer $N_\nu$ given by the theorem \ref{the:benoist-oh}. For $0\leq j\leq N_\nu-1$ we call $I_{\nu,j}$ the set of real numbers $t$ such that $E(\ln_{q_\nu}t)$ is equal to $j$ modulo $N_\nu$. The theorem \ref{the:benoist-oh} implies that \emph{on the sets} $I_{\nu,j}$ and for all $g\in G$ we have a asymptotic development of the volume of $(H_\nu)_t(g)$ of the form: $m_H((H_\nu)_t(g)) \sim  C_{\nu,j}t^{E_{\nu,j}}e^{D_{\nu,j} t}$.

Now consider the finite partition $I_1,\ldots,I_l$ of $\dr R$ given by the intersection of all these partitions. Then on a set $I_j$ of this partition and for all $g$ in $G$, the volume $m_H(H_t(g))$ is equivalent to some $C_j(g) t^{E_j(g)} e^{D_j(g)t}$. But we know by the previous lemma that the ratio $\frac{m_H(H_t(g))}{m_H(H_t)}$ is bounded.

At this point we are done: since the ratio is bounded, we have $E_j(g)=E_j(Id)$ and $D_j(g)=D_j(Id)$. Hence the ratio admits a limit (depending on the set $I_j$), namely $\frac{C_j(g)}{C_j(Id)}$.
\end{proof}

\section{Polynomial dynamic in homogeneous spaces}\label{sec:ratner}
We here recall some facts about polynomial dynamic in $S$-arithmetic groups. The result we need can mainly be found in Tomanov \cite{Tomanov1}. They are also used in \cite{gorodnik-oh}. The main difference here - which is only a technical one - is that we need to extend all the results to orbit of polynomial in several variables. This does not change deeply the proof of the theorems. The interested reader may refer to the author's PhD thesis \cite{mathese} for details.

\subsection{Measure on $\GG$ invariant under the action of a unipotent subgroup}

\subsubsection{Measure rigidity in an $S$-arithmetic setting}\label{sssec:ratner}

We need the rigidity theorem for measures invariant under an unipotent group, often called Ratner's theorem. For $p$-adic groups, it has been proved by Ratner and Margulis-Tomanov. But in an $S$-arithmetic setting a more precise version can be found in \cite{Tomanov1}.

Accordingly to \cite{Tomanov1}, we define the notion of subgroup of class $\mathcal F$ :

\begin{definition}
Let ${\bf A}$ be a $\dr Q$-subgroup of $\bG$. Then ${\bf A}$ belongs to the class $\mathcal F$ if and only if ${\bf A}(K_S)$ is the Zariski closure of the group generated by the unipotent elements of ${\bf A}(K_S)$.
\end{definition}

Recall from \cite{Tomanov1} that for a class $\mathcal F$-group ${\bf P}$, the subgroup $P\cap \G$ is a lattice in $P$. It implies that the projection of $P$ in $\GG$ is closed.

We can now state the measure rigidity theorem :

\begin{theorem}[Ratner, Margulis-Tomanov, Tomanov]\label{the:ratner}
Let $\bG$ be a $\dr Q$-group, $\G$ an arithmetic subgroup of $G={\bG}(K_S)$ and $U$ a subgroup of $G$ generated by its one-parameter unipotent subgroups.

Then for all probability measure $\mu$ on $\GG$ which is $U$-invariant and $U$-ergodic, there exist a class $\mathcal F$-subgroup ${\bf P}$ of $\bG$ and $P'$ a finite index subgroup of $P={\bf P}(K_S)$ such that the probability $\mu$ is the $P'$-invariant probability on a translate of a $P'$-orbit in $\GG$. 
\end{theorem}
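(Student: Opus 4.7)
My plan is to invoke \cite{Tomanov1}, where this precise statement is established in the $S$-arithmetic generality required here; to indicate the structure of the proof, here is an outline of the strategy. One studies the stabilizer $\La(\mu):=\{g\in G\; :\; g_*\mu=\mu\}$, which is a closed subgroup of $G$ containing $U$, and the goal is to show that $\La(\mu)$ contains a class-$\mathcal F$ subgroup $P={\bf P}(K_S)$ acting transitively on the support of $\mu$, from which the description of $\mu$ as a $P'$-invariant probability on an orbit follows by an ergodic decomposition argument.

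The core tool is the polynomial divergence of unipotent orbits: for a one-parameter unipotent subgroup $(u_t)$ of $U$ and $g$ close to the identity, the conjugate $u_t g u_{-t}^{-1}$ evolves polynomially in $t$. Applying Birkhoff simultaneously to two generic nearby points $x$ and $xg$, and following Ratner's shearing argument, one extracts along an appropriate subsequence a direction of divergence which must lie in $\La(\mu)$. Iterating this construction enlarges the stabilizer, and classical arguments (based on the algebraicity of stabilizers and a density--closedness dichotomy) then show that $\La(\mu)$ is the set of $K_S$-points of a $K$-algebraic subgroup of $\bG$. Taking ${\bf P}$ to be the Zariski closure of the subgroup of $\La(\mu)$ generated by its one-parameter unipotent subgroups puts ${\bf P}$ in class $\mathcal F$, and since ${\bf P}\cap\G$ is then a lattice in $P={\bf P}(K_S)$, the $P$-orbits in $\GG$ are closed. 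Ergodicity of $\mu$ under $U\subset P$ then forces $\mu$ to be carried by a single such orbit, and a finite-index subgroup $P'$ of $P$ suffices to preserve it.

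The main obstacle is carrying out the polynomial divergence step uniformly over all places $\nu\in S$: one must handle simultaneously the archimedean divergence and the ultrametric behaviour at finite places, and control how they mix through $G=\prod_{\nu\in S} \bG(K_\nu)$. In particular, in the ultrametric case a one-parameter unipotent subgroup is parametrised by $K_\nu$ rather than $\dr R$, and the shearing argument must be done discretely along powers of a uniformizer while keeping the estimates compatible across places. Tomanov's technical contribution is precisely to make these estimates work in the product setting and to ensure that the class-$\mathcal F$ structure is compatible with the decomposition $G=\prod_{\nu}\bG(K_\nu)$, so that a \emph{single} $K$-subgroup ${\bf P}$ of $\bG$ (rather than one subgroup per place) can be extracted to describe $\mu$.
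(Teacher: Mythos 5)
The paper does not prove this theorem: it is stated as a known result and attributed to Ratner, Margulis--Tomanov, and Tomanov, with the reference to \cite{Tomanov1} doing all the work. Your proposal does exactly the same thing---invoking \cite{Tomanov1}---and the additional outline you give of Ratner's shearing strategy and its $S$-arithmetic adaptation, while broadly accurate, goes beyond what the paper offers and is not part of its argument.
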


This theorem allows a complete description of $U$-invariant probability measures.

\subsubsection{The non-ergodic case}

Let $U$ be a subgroup of $G$ generated by its one-parameter unipotent subgroups and $\mu$ be a $U$-invariant probability measure on $\GG$.

For each class $\mathcal F$ subgroup of $\bG$, the precedent theorem defines a class of $U$-ergodic probability measures. To understand the decomposition of $\mu$ into ergodic components, we have to define some subsets of $G$ :

\begin{definition}
Let ${\bf P}$ be a class $\mathcal F$ subgroup of $\bG$. Then the sets $X(P,U)$ and $S(P,U)$ are defined in the following way :
\begin{eqnarray*}
X(P,U) &=& \left\{g \in G \st Ug \subset gP\right\} \\
S(P,U) &=& \bigcup_{{\bf P}'\in
\mathcal F \; , \; {\bf P}'\subset {\bf P}} X(P',U)
\end{eqnarray*}
\end{definition}

We remark that $X(P,U)$ is an algebraic subvariety of $G$. 

For each class $\mathcal F$ subgroup ${\bf P}$ of $\bG$, let $\mu_{\bf P}$ be the restriction of $\mu$ to $\pi(X(P,U)-S(P,U))$. Then each ergodic component of $\mu_{\bf P}$ is of the form given by the precedent theorem for this group ${\bf P}$. Moreover, since the sets $\pi(X(P,U)-S(P,U))$ are disjoint we get the following decomposition of $\mu$ in a denombrable sum : $$\mu = \sum_{{\bf P}\in \mathcal F} \mu_{{\bf P}} \;.$$

This decomposition enlightens the following fact : in order to understand a measure $U$-invariant, we have to understand the behaviour of trajectories near the variety  $\pi(X(P,U)-S(P,U))$. The goal of  this section is to get a such a result. But first of all, we will define some useful representations of the group $G$.

\subsection{A suitable representation}

We fix here a class $\mathcal F$-subgroup ${\bf P}$. Chevalley's theorem \cite[5.1]{Borel} grants the existence of a $K$-representation $\rho_P$ of $\bG$ such that ${\bf P}$ is the stabilizer of a line ${\bf D}$ in the space ${\bf V}_P$ of the representation.

We fix a point $v_P$ in ${\bf D}(K)$. Moreover we consider $v_P$ as a point of the $K_S$-module $V_P={\bf V}_P(K_S)$. We now get a function $\eta_P$ from $G$ to $ V$ given by the following formula : $$\eta_P(g)=\rho_P(g).v_P \; .$$

The normalizer ${\bf N(P)}$ of ${\bf P}$ fix the line ${\bf D}$ but not the point $v_P$. So we define ${\bf N_1(P)}$ to be the fixator of the point $v_P$.

The following lemma will be useful, as a link between properties of subset in $\GG$ and in $V_P$ :

\begin{lemma}
\begin{itemize}
\item The set $\eta_P(\G)$ is discrete in $V_P$. 
\item The set $N_1(P)\G /\G$ is closed in $\GG$.
\end{itemize}
\end{lemma}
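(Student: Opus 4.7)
My plan is to prove the two items in order, using the first as a key input for the second.

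\textbf{For the discreteness of $\eta_P(\G)$.} I will first reduce to the case of $\bG(\mathcal O_S)$-orbits: since $\G$ is commensurable with $\bG(\mathcal O_S)$, the intersection $\Lambda = \G \cap \bG(\mathcal O_S)$ is of finite index, $\G = \bigsqcup_{i=1}^r \g_i \Lambda$ is a finite union of cosets, and each $\rho_P(\g_i)$ acts as a homeomorphism of $V_P$. It therefore suffices to show $\eta_P(\Lambda)$ is discrete. The key step will be to exhibit an $\mathcal O_S$-lattice $L \subset {\bf V}_P(K)$, of full $K$-rank, containing $v_P$, and stable under $\rho_P(\Lambda)$; one builds such an $L$ starting from any a priori chosen $\mathcal O_S$-lattice by absorbing the denominators coming from the $K$-linear action of a finite generating set of $\Lambda$. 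Once $L$ is in hand, $\rho_P(\Lambda) v_P \subset L$, and $L$ — being a full-rank $\mathcal O_S$-submodule of ${\bf V}_P(K)$ diagonally embedded in the $K_S$-module $V_P = {\bf V}_P(K_S)$ — is closed and locally finite in $V_P$. Discreteness of $\eta_P(\Lambda)$, and hence of $\eta_P(\G)$, then follows.

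\textbf{For the closedness of $N_1(P)\G/\G$.} It suffices to prove that $N_1(P)\G$ is closed in $G$. I take a convergent sequence $g_n = h_n \g_n \to g$ with $h_n \in N_1(P)$ and $\g_n \in \G$, and exploit that $h_n$ fixes $v_P$ to compute
\[
\eta_P(\g_n^{-1}) = \rho_P(\g_n^{-1}) v_P = \rho_P\bigl((h_n \g_n)^{-1}\bigr) \rho_P(h_n) v_P = \rho_P(g_n^{-1}) v_P,
\]
which converges in $V_P$ to $\rho_P(g^{-1}) v_P$ by continuity of $\rho_P$. The first part then forces the sequence $\eta_P(\g_n^{-1})$ to lie in a locally finite subset of $V_P$ and hence to be eventually constant: there exists $\g \in \G$ with $\eta_P(\g_n^{-1}) = \eta_P(\g^{-1})$ for $n$ large. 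This rearranges to $\rho_P(\g \g_n^{-1}) v_P = v_P$, i.e.\ $\g \g_n^{-1} \in N_1(P)$, so $g_n \g^{-1} = h_n (\g \g_n^{-1})^{-1} \in N_1(P)$ for such $n$. Passing to the limit inside the closed subgroup $N_1(P)$ gives $g \g^{-1} \in N_1(P)$, i.e.\ $g \in N_1(P) \G$.

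The main obstacle is the construction of the $\rho_P(\Lambda)$-stable lattice $L$ in the first part; everything else is essentially formal. This construction is classical in the theory of $S$-arithmetic groups, resting on finite generation of $\Lambda$ and the $K$-rationality of the representation $\rho_P$, so I do not expect any genuine difficulty beyond invoking it correctly.
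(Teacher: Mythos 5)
Your proof follows essentially the same route as the paper's: both items hinge on the discreteness of $\eta_P(\G)$, and your closedness argument — passing to $\eta_P(\g_n^{-1})=\rho_P(g_n^{-1})v_P$, extracting the stationary tail from discreteness, and concluding $g\g^{-1}\in N_1(P)$ via closedness of the stabilizer — is the one in the paper. For the first item you are more explicit than the paper (which simply asserts that $\rho_P(\bG(\mathcal O_S))v_P$ is discrete because ${\bf V}_P(\mathcal O_S)$ is and $\rho_P$ is a $K$-representation), and producing the $\rho_P(\Lambda)$-stable $\mathcal O_S$-lattice $L$ is indeed the fact being invoked implicitly; one small caveat is that ``absorbing denominators from a finite generating set'' does not, by itself, terminate for an arbitrary finitely generated subgroup of $GL({\bf V}_P)(K)$ — what makes the construction work is the commensurability of $\Lambda$ with $\bG(\mathcal O_S)$, i.e.\ the arithmeticity that you cite, rather than finite generation per se.
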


\begin{proof}
First the subgroup ${\bf V}_P(\mathcal O_S)$ is discrete in $V_P={\bf V}_P(K_S)$  and $\rho_P$ is a $K$-representation. So the set $\rho_P({\bf G}(\mathcal O_S)).v_P$ is discrete in $V_P$. Moreover $\G$ is supposed to be arithmetic, so $\eta_P(\G)$ is contained in a finite number of translates of $\rho_P({\bf G}(\mathcal O_S)).v_P$. Hence it is a discrete set.

Second, let $g_k=n_k \g_k$ be a sequence of points in  $N_1(P)\G$ and assume that $g_k$ converges to a point $g$. We want to prove that $g\G/\G$ belongs to $N_1(P)\G /\G$. We rewrite the definition of $g_k$ : $\g_k^{-1}=g_k^{-1}n_k$. By definition of $N_1(P)$, we then get $\eta_P(\g_k^{-1})=\eta_P(g_k^{-1})$. We just showed that $\eta_P(\G)$ is discrete. So the sequence $\g_k$ is stationary equal to a $\g$ for $k$ large enough. Then $g_k \g^{-1}$ fixes $v_P$ for $k$ large enough. That is $g_k \g^{-1}$ belongs to $N_1(P)$. So does its limit and we can conclude : $g$ belongs to $N_1(P)\G$.
\end{proof}

We conclude with a last definition involving the group $U$. The set $X(P,U)$ is $N(P)$-invariant hence $N_1(P)$-invariant by right multiplication and it is a Zariski closed set of $G$. So its image by the function $\eta_P$, which is Zariski-open and surjective on $\eta_P(G)$, is Zariski-closed in $\eta_P(G)$. However there is no reason for it to be Zariski-closed as well in $V_P$. So we define $F(P,U)$ as the Zariski-closure of $\eta_P(X(P,U))$ in $V_P$.

\begin{remark}
To avoid confusion, let us describe the Zariski topology in $K_S$-modules : a polynomial $Q$ of $K_S[X_1,\ldots,X_n]$ is nothing else than a collection of polynomial $Q_\nu$ for all $\nu$ in $S$. A Zariski-closed subset of a $K_S$-module $\displaystyle M=\prod_{\nu \in S} m_\nu$  is then naturally an intersection of products of Zariski-closed subsets of each $M_\nu$
\end{remark}

\subsection{Behavior of polynomial functions}
We now state a theorem allowing to control polynomial dynamics along the sets
$\pi(X(P,U) - S(P,U))$. Let us begin by the definition of a polynomial function in the $K_S$-points $G$ of a $K$-group $\bG$ with a faithful linear representation $\rho$: a function
$f=(f_\nu)_{\nu\in S}$ from $(K_S)^m$ to $G$ is said polynomial of degree $d$ if for all $\nu \in S$, the matrix entries of $\rho\circ f_\nu$  are all polynomial of degree $d$. The set of functions from $K_S^m$ to $G$ polynomial of degree at most $d$ will be noted $\mathcal P_{d,m} (G)$. Moreover we note $\theta=\bigotimes_{\nu in S} \theta_\nu$ the Haar measure on $K_S$ normalized such that the volume of $K_S/\mathcal O_S$ equals $1$ and $\theta_m=\bigotimes^m \theta$ the induced measure on $K_S^m$.

Recall the definition of $\eta$ from $G$ to some $K$-module $V_G$ given by Chevalley's theorem. Moreover $F(P,U)$ has been defined as the Zariski closure of $\eta(X(P,U))$ inside $V_G$. Hereafter, we call \emph{cube} in $(K_S)^m$ a product of balls $\prod_{i=1}^m\prod_{\nu\in S} B_{i,\nu}$.

\begin{theorem}[Tomanov]\label{the:DM}
Let ${\bf G}$ be a $K$-group, $\G$ an arithmetic subgroup of
 $G= {\bf G}(K_S)$,  $U$ be a subgroup of $G$ generated by its one-parameter unipotent subgroups and ${\bf P}$ a class $\mathcal F$-subgroup. Let $C$ be a compact subset of $X(P,U) \G / \G$, $d$ and $m$ two integers and $\varepsilon >0$.

Then there exists a compact subset $D$ of $F(P,U)$ such that for all relatively compact neighbourhood $W_0$ of $D$ in $V_G$, there exists a neighbourhood $W$ of $C$ in
$\GG$,  such that for all $m$, for all cube $B$ in $(K_S)^m$, and all function
$f$ in $\mathcal P_{(d,m)} (G)$ we have :
\begin{itemize}
\item either we can find $\g$ in $\G$ such that $\eta(f(B)\g) \subset
W_0$
\item or $\theta_m(\left\{t \in B \st (f(t)\G / \G) \in W \right\}) < \varepsilon
\theta_m(B)$.
\end{itemize}
\end{theorem}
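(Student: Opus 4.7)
The plan is to adapt the Dani--Margulis--Kleinbock linearization method to polynomial maps in several variables over the $S$-adic ring $K_S$. The central analytic input is the $(C,\alpha)$-good property of polynomials: there exist constants $C_0 = C_0(d,m,|S|)$ and $\alpha = \alpha(d,m,|S|) > 0$ such that for every polynomial $Q \colon K_S^m \to K_S$ of degree at most $d$, every cube $B \subset K_S^m$ and every $\delta > 0$,
$$\theta_m\bigl(\{t \in B : |Q(t)|_S < \delta \sup_B |Q|_S\}\bigr) \leq C_0\, \delta^\alpha\, \theta_m(B).$$
Applied to a finite family of regular functions cutting out $F(P,U)$ inside $V_G$, this yields the measure estimate I will eventually need.

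First I would construct $D$. Lift $C$ via a Borel section to a compact $\widetilde C \subset X(P,U)$ and let $D$ be a slight compact enlargement of $\eta(\widetilde C)$ inside $F(P,U)$. The linearization lemma I would prove next uses the discreteness of $\eta(\Gamma)$ in $V_P$ and the closedness of $N_1(P)\Gamma/\Gamma$ (both established just above): given any relatively compact neighborhood $W_0 \supset D$ in $V_G$, there exist a strictly smaller open neighborhood $W_0' \Subset W_0$ of $D$ and a neighborhood $W \subset \GG$ of $C$ such that any $g \in G$ with $g\Gamma \in W$ admits some $\gamma \in \Gamma$ with $\eta(g\gamma) \in W_0'$, and such that the set $\{\gamma \in \Gamma : \eta(g\gamma) \in W_0\}$ is finite modulo the stabilizer of $v_P$ in $\Gamma$.

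Next, fix a cube $B$ and $f \in \mathcal P_{d,m}(G)$, and suppose that $B_0 := \{t \in B : f(t)\Gamma \in W\}$ has measure at least $\varepsilon\, \theta_m(B)$. For each $t \in B_0$ measurably select $\gamma(t) \in \Gamma$ as above. Since $f(B)$ is bounded in $G$ and $\eta(\Gamma)$ is discrete in $V_P$, the image of the map $\gamma(\cdot)$ is a finite subset of $\Gamma$; a pigeonhole then produces a single $\gamma$ with $B_0^\gamma := \{t \in B_0 : \eta(f(t)\gamma) \in W_0'\}$ of measure bounded below by a positive multiple of $\theta_m(B)$. The map $t \mapsto \eta(f(t)\gamma)$ is polynomial of degree controlled by $d$ and $\deg(\rho_P)$. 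If $\eta(f(B)\gamma) \not\subset W_0$, then some regular function cutting out the complement of $W_0$ would take a value of order $\sup_B$ somewhere in $B$ while remaining arbitrarily small on $B_0^\gamma$, so the $(C,\alpha)$-good estimate forces $\theta_m(B_0^\gamma)$ below any prescribed fraction of $\theta_m(B)$ as soon as $W_0'$ is chosen deep enough inside $W_0$. Choosing this depth accordingly contradicts the lower bound, so $\eta(f(B)\gamma) \subset W_0$ and the first alternative holds.

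The main obstacle I expect is making the pigeonhole step both effective and uniform in $f$ and $B$: the number of contributing translates $\gamma$ a priori depends on the size of $f(B)$, and one must either bound this number in terms of the fixed data $C$, $d$, $m$, $\varepsilon$ only, or iterate the $(C,\alpha)$-good argument peeling off one $\gamma$ at a time until the residual bad set becomes small. This is the delicate combinatorial core of the Dani--Margulis scheme, and in Tomanov's framework it is handled by exploiting the orbit structure of $N_1(P)$ in $V_P$ together with the fact that $W$ and $W_0' \Subset W_0$ are chosen before the cube $B$ and the polynomial $f$ enter the picture. The several-variable nature of $f$ only affects the product structure of cubes and leaves the $(C,\alpha)$-good constants intact.
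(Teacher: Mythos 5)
The paper does not actually prove this theorem: it cites Tomanov for the one-parameter unipotent case, Shah for the real case, and the author's PhD thesis for the extension to $\mathcal P_{d,m}$, asserting only that the generalization involves ``no conceptual jump.'' So there is no paper proof against which to line up your steps; what one can judge is whether your sketch of the Dani--Margulis linearization machinery would actually close.

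Your overall strategy is the right one and matches the approach underlying Tomanov and Shah: the $(C,\alpha)$-good property of polynomials over $K_S$, the Chevalley representation $\rho_P$ with $\eta_P$, the discreteness of $\eta_P(\Gamma)$ and closedness of $N_1(P)\Gamma/\Gamma$ as structural inputs, and a measure estimate on the bad set. However, the pigeonhole step as you state it does not work, and you have correctly sensed this. If $N$ is the number of translates $\gamma$ that contribute, pigeonhole gives $\theta_m(B_0^\gamma) \geq \varepsilon\,\theta_m(B)/N$, but $N$ depends on $f$ and $B$, while the ``depth'' $W_0' \Subset W_0$ (and hence the parameter $\delta$ feeding into the $(C,\alpha)$-good bound $C_0\delta^\alpha$) must be fixed \emph{before} $f$ and $B$ are chosen. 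There is no way to win the inequality $\varepsilon/N > C_0\delta^\alpha$ uniformly. Your proposed fallback of ``iterating and peeling off one $\gamma$ at a time'' is not obviously terminating and does not by itself give a bound independent of $f$ and $B$.

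The actual resolution in Dani--Margulis, Kleinbock--Margulis, Shah, and Tomanov is not a pigeonhole at all. One covers the relevant portion of the cube by \emph{maximal} sub-cubes (intervals in the one-variable case; in the several-variable $S$-arithmetic case one typically reduces to lines or uses a dyadic covering argument) on each of which the trajectory $t \mapsto \eta_P(f(t)\gamma)$ stays in $W_0$ for a single $\gamma$. The self-intersection step — two distinct translates $\gamma_1 \neq \gamma_2 \pmod{\mathrm{Stab}_\Gamma(v_P)}$ cannot both keep the trajectory in $W_0$ over overlapping regions unless $f(t)\Gamma$ actually lies on $N_1(P)\Gamma/\Gamma$, which is forced back into the first alternative of the theorem — is what makes these maximal sub-cubes essentially disjoint. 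Then $(C,\alpha)$-good applies on each sub-cube separately, and one sums the contributions without ever needing to count $N$. Your plan mentions the relevant ingredients (discreteness of $\eta_P(\Gamma)$, closedness of $N_1(P)\Gamma/\Gamma$, $W_0' \Subset W_0$) but does not assemble them into this covering-plus-disjointness argument, which is the delicate combinatorial core you flag. Until that is spelled out, the proof has a genuine gap at exactly the step you identify.
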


In \cite{Tomanov1} the theorem was not stated for functions in $\mathcal P_{d,m}$ but for one parameters unipotent orbits. However there is no conceptual jump in the proof of the above theorem. Moreover the real cases of this theorem (and of all this section) is well known \cite{shah}. The interested reader may find more technical details in the author's PhD thesis \cite{mathese}.

\subsection{Non-divergence of polynomial orbits}

We need a last result in order to control the divergence of polynomial orbit. The following theorem is a kind of analog of a result of Eskin-Margulis-Shah \cite{EMS}. However, we won't need the whole precision of their result, we may just use a slight adaptation of \cite[Theorem 8.4 and 9.1]{kleinbock-tomanov} :

\begin{theorem}[Kleinbock-Tomanov]\label{the:nondiv}
Let ${\bf G}$ be a $K$-group, $\G$ an arithmetic subgroup of
 $G= {\bf G}(K_S)$.  Fix $d$ and $m$ two integers.
 
Then there are a finite number of parabolic subgroups ${\bf P_k}$ of ${\bf G}$ and their associated Chevalley representations $\rho_k$ in a space $V_k$ with a marked point $v_k \in V_k$ in a line stabilized by ${\bf P_k}$ such that  :

for all $\varepsilon>0$ there are a compact $D$ in $\GG$ and  compact subsets $D_k$ in each $V_k$ verifying:
for all $f$ in $\mathcal P_{(d,m)} (G)$, for all cube $B$ in $(K_S)^m$, one of the following holds :
\begin{enumerate}
\item $ \theta_m(\left\{t \in B \st (f(t)\G / \G) \not\in  D \right\}) < \varepsilon \theta_m(B)$.
\item There is an integer $k$ such that there exists $\gamma \in \G$ with : $\rho_k(f(B)\gamma).v_k \subset D_k$.  
\end{enumerate}
\end{theorem}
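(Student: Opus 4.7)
The plan is to derive the statement from the $S$-arithmetic non-divergence theorem of Kleinbock--Tomanov (Theorems 8.4 and 9.1 of \cite{kleinbock-tomanov}), which handles one-variable polynomial trajectories. Nothing conceptually new is required beyond their argument; the work consists in enumerating the relevant parabolic data and checking that the $(C,\alpha)$-good machinery extends uniformly to polynomials in $m$ variables on cubes in $K_S^m$.

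First I would fix representatives ${\bf P}_1,\ldots,{\bf P}_r$ of the finitely many $\G$-conjugacy classes of maximal proper $K$-parabolic subgroups of $\bG$. For each ${\bf P}_k$, Chevalley's theorem (as used already in section \ref{sec:ratner}) provides a $K$-representation $\rho_k\colon \bG\to \d{GL}(V_k)$ and a rational vector $v_k\in V_k(K)$ spanning a ${\bf P}_k$-stable line; the orbit $\rho_k(\G)v_k$ is discrete in $V_k$. The Mahler-type compactness criterion in the $S$-arithmetic setting then reads as follows: an image $f(t)\G/\G$ escapes every compact subset of $\GG$ precisely when, for some $k$ and some $\gamma\in\G$, the height $|\rho_k(f(t)\gamma)v_k|$ becomes arbitrarily small.

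Second I would verify the analytic input. For any $K$-representation $\rho$ of $\bG$ and any $f\in\mathcal{P}_{(d,m)}(G)$, each matrix coefficient of $\rho\circ f$, viewed componentwise over $\nu\in S$, is a polynomial of degree at most $Cd$ for a constant $C=C(\rho)$. These functions are uniformly $(C',\alpha)$-good on cubes in $K_S^m$, with constants depending only on $d$, $m$, $\dim V_\rho$ and $[K:\dr Q]$. This property is stable under products over $\nu$ and under restriction to subcubes, which is exactly what is needed to run the Kleinbock--Tomanov covering argument with a cube $B$ in place of an interval.

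Given these two ingredients, the proof proceeds inductively on the depth into the cusp. For $\varepsilon>0$ the compacts $D_k\subset V_k$ around $0$ and $D\subset\GG$ are constructed by successively pruning excursions: at each stage the $(C',\alpha)$-good inequality applied to the polynomial $t\mapsto\rho_k(f(t)\gamma)v_k$ bounds the measure of the set where this height drops below the current threshold in terms of its supremum over $B$, so either $\rho_k(f(B)\gamma)v_k\subset D_k$ (alternative (2)) or a controlled fraction of $B$ can be discarded. The main obstacle is combining these estimates over all pairs $(k,\gamma)$ into a single compact $D\subset\GG$ independent of $f$ and $B$; this is handled exactly as in \cite{kleinbock-tomanov}, and the $m$-variable adaptation is spelled out in \cite{mathese}.
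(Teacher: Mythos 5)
Your proposal matches the paper's approach: the paper does not give a proof of this theorem, but states it is "a slight adaptation of \cite[Theorem 8.4 and 9.1]{kleinbock-tomanov}," and your argument spells out precisely that adaptation (the parabolic/Chevalley data, the $(C,\alpha)$-good property of matrix coefficients of polynomial maps on cubes in $K_S^m$, and the covering argument from Kleinbock--Tomanov run in several variables). The only cosmetic difference is that you specify the ${\bf P}_k$ as $\G$-conjugacy representatives of maximal proper $K$-parabolics, whereas the paper's statement leaves them unspecified; this is the standard choice and is consistent with what the cited reference requires.
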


\section{Some tools: Cartan decomposition, decomposition of measures and representations}

Our proof of theorem \ref{the:Horbit} requires some technical tools. The first one is more than classical: the Cartan decomposition in the semisimple part, which we recall to settle some notations. The second one is merely a way to note all the measures (and their translates) we will consider in the sequel, together with some basic lemmas. The third and last one is a lemma on representations of $H$. It is an extension of \cite[Part 5]{shah} to our setting.

\subsection{Cartan Decomposition in $H^{ss}$}

The group $H$ is a semidirect product of a semisimple part $H^{ss}$ and a unipotent one $H^u$. For the semisimple part we have a Cartan decomposition: for all $\nu$ in $S$ such that $H_\nu$ is non-compact we choose a maximal $K_\nu$-split torus $A_\nu$ in $H_\nu$. We choose then a system of positive simple restricted roots $\Phi_\nu$ thus defining the associated sub-semigroup $A^+_\nu$ of $A_\nu$. Then there exists maximal compact subgroups $C_\nu$ and finite sets $D_\nu$ in the normalizer of $A_\nu$ such that the following Cartan decomposition holds: $H_\nu$ is the disjoint union of the double class $C_\nu d a  C_\nu$ for $a\in A^+_\nu$ and $d\in D_\nu$. For the existence of these objects we refer to \cite{Tits}.
When $H_\nu$ is compact we just choose $C_\nu=H_\nu$, $A_\nu$ and $D_\nu$ are reduced to the identity.

Let $A^+=\prod_{\nu\in S} A^+_\nu$ and similarly $C$ and $D$ are the products of the $C_\nu$'s and $D_\nu$'s. Let $\Phi$ be the union of the $\Phi_\nu$. For $\alpha \in \Phi_\nu \subset \Phi$ and $a=(a_\nu)_{\nu \in S}$ we define $\alpha(a)=\alpha(a_\nu)$.

Consider a sequence $a_n$ of elements of $A^+$.
\begin{definition}
A sequence $a_n$ of elements of $A^+$ is \emph{simplified} if for all $\alpha$ in $\Phi$ we have the alternative:
\begin{itemize}
\item either $\alpha(a_n)$ is bounded,
\item or $\alpha(a_n)$ goes to $+\infty$
\end{itemize}

Associated to such a simplified sequence, we consider the contracted unipotent subgroup of $H^ss$.
$$U^+ = \left\{ h\in H^{ss} \st \lim_{n\to +\infty}a_n^{-1} h a_n =e \right\}\, .$$
\end{definition}
\begin{remark}
We did not assume that a simplified sequence $a_n$ is unbounded. So the group $U^+$ associated may be equal to the trivial group.
\end{remark}

\subsection{Decomposition of measures}

The idea is simple: given some measure $\mu$ on the ball $(H^{ss})_t$, we want to define a probability measure on the ball $H_t$ which disintegrates (in the product $H=H^{ss}\rtimes H^u$) on $\mu$ and the Haar measure in the fibers. The notations may seem tedious as we must work at each place in parallel. But it will proove useful later.

The assumptions made on the norm ensure the following : for all $h^{ss}$ in $H_\nu^{ss}$, the set of elements in $H_\nu^u$ such that $h^{ss}h^u$ belongs to $(H_\nu)_t$ is a ball of radius some $l_{[\nu,t]}(h^{ss}$) in $H_\nu^u$ and moreover depends continuously on $h^{ss}$ and $t$. So for all $t$, there is a continuous function $l_{[\nu,t]}$ from $H_\nu^{ss}$ to $\mathbb R^+$ such that :
$$(H_\nu)_t=\bigcup_{h\in H_\nu^{ss}} \{h\}\times (H_\nu^u)_{l_{[\nu,t]}(h^{ss})}$$
This in turn translates in terms of measures. We note $m^u_\nu(l)$ the restriction of the Haar measure $m_{H_\nu^u}$ to the ball $(H_\nu^u)_l$. And for measure $\mu_\nu$ in $H_\nu^{ss}$, we may define the measure $m_\nu(\mu_\nu,t)$ by the formula, for all $\phi$ continuous with compact support on $H_\nu$ :

$$\int_{H_\nu} \phi dm_\nu(\mu_\nu,t)=\int_{H^{ss}_\nu} \int_{(H_\nu^u)_{l_{[\nu,t]}(o)}} \phi(ob)dm^u_\nu(l_{[\nu,t]}(o))(b)d\mu_\nu(o)$$

For $\mu=\bigotimes \mu_\nu$ a product measure on $H^{ss}$ of finite total mass and $t$ positive, we note $m(\mu,t)$ the product $\bigotimes_{\nu \in S} m_\nu(\mu_\nu,t)$'s. Eventually we note $\dr P(\mu,t)$ the renormalized probability measure and $\d{Supp}(\mu,t)$ its support. Remark that, if $\mu$ proportionnal to the Haar measure of some subgroup $S$ in $H^{ss}$, then $m(\mu,t)$ is proportional to the Haar measure in $S\rtimes H^{u}$ restricted to $(S\rtimes H^u)_t$.

Let us immediatly state two lemmas showing that this probability measures behave well with respect to $\mu$ as soon as the support of $\mu$ does not approach the frontier of $H_t$. First look at translations:

\begin{lemma}\label{lem:limtranslate}
Let $\mu_n$ be a sequence of probability measure on $H^{ss}$ and $t_n$ go to $\infty$. Let $h_n$ go to $Id$ in $H^{ss}$. Assume that the support of $\mu_n$ is included in a ball of radius $H^{ss}_{(1-\varepsilon)t_n}$ for some $\varepsilon>0$.

Then the sequence of (signed) measure $\dr P(((h_n)_*\mu_n),t_n)-\dr P(\mu_n,t_n)$ converges to $0$.
\end{lemma}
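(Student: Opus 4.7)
The plan is to test the signed measure against an arbitrary $\phi\in\mathcal C_c(H)$ and to show that $|\int\phi\,d\dr P((h_n)_*\mu_n,t_n)-\int\phi\,d\dr P(\mu_n,t_n)|\to 0$. Since $m(\mu,t)=\bigotimes_\nu m_\nu(\mu_\nu,t)$, the first reduction will be to treat tensor test functions $\phi=\bigotimes_\nu \phi_\nu$ (using Stone--Weierstrass density of $\bigotimes_\nu \mathcal C_c(H_\nu)$ in $\mathcal C_c(H)$), so that the estimate splits into one statement per place. From now on I work at a single place, dropping $\nu$ from the notation.

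Writing $V_t(o):=m^u\bigl((H^u)_{l_t(o)}\bigr)$ and $M_n:=\int V_{t_n}(o)\,d\mu_n(o)$, $M'_n:=\int V_{t_n}(h_no)\,d\mu_n(o)$ for the two total masses, the first and central step will be to prove that $l_{t_n}(h_no)/l_{t_n}(o)\to 1$ uniformly in $o$ on the support of $\mu_n$. This will be done by inspection of the two permitted forms of the norm. In the ultrametric case the max-norm gives $l_{t_n}(o)=t_n$ identically, so the ratio equals $1$. In the archimedean case, in an $L^p$-basis one has $l_{t_n}(o)=(t_n^p-|o|^p)^{1/p}$; the support hypothesis $|o|\leq(1-\varepsilon)t_n$ provides lower bounds $l_{t_n}(o)\geq c_\varepsilon t_n$ and $l_{t_n}(h_no)\geq c_{\varepsilon/2}t_n$ for large $n$, and the ratio-one claim then reduces to a direct computation from $|h_no|=|o|(1+o(1))$. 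Since $V_t$ is polynomial in $l$ archimedeanly, and step-polynomial with the same exponent ultrametrically, I deduce $V_{t_n}(h_no)/V_{t_n}(o)\to 1$ uniformly, and hence $M'_n/M_n\to 1$.

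With $M'_n/M_n\to 1$ in hand, the next step is to split the difference of the unnormalised measures tested against $\phi$. For each fixed $o$, the inner bracket decomposes as (i) an integral of $\phi(h_n o\,\cdot)$ over the symmetric difference of the two nested $H^u$-balls of radii $l_{t_n}(h_no)$ and $l_{t_n}(o)$, bounded by $\|\phi\|_\infty\cdot|V_{t_n}(h_no)-V_{t_n}(o)|=o(V_{t_n}(o))$ uniformly in $o$; plus (ii) $\int_{(H^u)_{l_{t_n}(o)}}\bigl(\phi(h_no b)-\phi(ob)\bigr)\,dm^u(b)$, which is $o(1)\cdot V_{t_n}(o)$ uniformly by uniform continuity of $\phi$ and $h_n\to\mathrm{Id}$. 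Integrating against $\mu_n$, dividing by $M_n$, and correcting for the normalisation via $|1/M_n-1/M'_n|\cdot M'_n\cdot\|\phi\|_\infty\to 0$ yields the claim after a triangle inequality.

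The main technical obstacle will be the uniform ratio estimate $l_{t_n}(h_no)/l_{t_n}(o)\to 1$: without the cushion $(1-\varepsilon)t_n$ in the support hypothesis, $l_{t_n}(o)$ cannot be bounded below relative to $t_n$, and any multiplicative perturbation of $|o|$ could make the ratio degenerate. It is there---and essentially only there---that the explicit form of the norm required in condition (2) of the definition of a size function comes into play.
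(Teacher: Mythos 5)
Your argument follows the paper's proof: establish the uniform ratio estimate $l_{[\nu,t_n]}(h_n o)/l_{[\nu,t_n]}(o)\to 1$ on the support of $\mu_n$, deduce that the fibrewise $H^u$-balls and their total masses match asymptotically, and finish by uniform continuity of the test function. One local over-commitment is worth flagging: the explicit formulae you derive for $l_t$ --- $(t^p-|o|^p)^{1/p}$ archimedeanly and $l_t(o)\equiv t$ ultrametrically --- are not literally forced by the paper's hypotheses on the size function, since condition (1) only asserts that $|h|$ is \emph{some} increasing function of $(|h^{ss}|,|h^u|)$, not a Pythagorean or max combination; the paper sidesteps this by invoking left-uniform (multiplicative) continuity of the norm to get the ratio estimate in full generality, and in the ultrametric case the cleaner observation is that once $|\rho(h_n)-\mathrm{Id}|<1$ the max-norm is exactly preserved, giving $l_{t_n}(h_no)=l_{t_n}(o)$ outright. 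Replacing your two explicit computations by this continuity argument makes the proof match the paper's and removes the only weak link; the role you identify for the $(1-\varepsilon)t_n$ cushion is exactly right.
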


\begin{proof}
The assumption on the supports of $\mu_n$ ensures that the supports of $(h_n)_*\mu_n$ are included in $(H^{ss})_{t_n}$ for $n$ big enough. Moreover (by left-uniform continuity of the norms) we have for every sequence $g_n$ in the support of $\mu$ and for all place $\nu$ (here I forget some indices $\nu$ to keep the formula readable): $$\frac{l_{[\nu,t_n]}(h_ng_n)}{l_{[\nu,t_n]}(g_n)}\xrightarrow{n\to \infty} 1\; .$$
As, eventually, the signed measures $(h_n)_*\mu_n-\mu_n$ go to $0$, the lemma is proven by a straightforward calculus.
\end{proof}

The second lemma allows to handle also a sequence of measure $\mu_n$:

\begin{lemma}\label{lem:limsupport}
Let $\mu_n$ be a sequence of probability measures on $H^{ss}$ converging to $\mu$ with all these measures supported in a given compact set and absolutely continuous with respect to some $\lambda$. Let $t_n$ be a sequence of real numbers going to $+\infty$ and $h_n$ a sequence of elements in $H^{ss}$.

Then the sequence of (signed) measure $\dr P(((h_n)_*\mu_n),t_n)-\dr P((h_n)_*\mu,t_n)$ converge to $0$.
\end{lemma}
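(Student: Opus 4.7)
The approach is to test both probability measures against an arbitrary $\phi \in \mathcal C_c(H)$ and show the difference of the two evaluations vanishes as $n\to\infty$. Using the fibered definition of $m(\nu, t_n)$ and the change of variable $o = h_n g$, one writes, for $\nu \in \{\mu_n, \mu\}$,
$$\int_H \phi\, d\dr P((h_n)_*\nu, t_n) = \frac{\int_{K_0} G_n(g)\, d\nu(g)}{\int_{K_0} M_n(g)\, d\nu(g)},$$
where $K_0 \subset H^{ss}$ is the common compact support and, abbreviating $H^u_{L_n(g)} := \prod_{\nu\in S} H^u_{l_{[\nu, t_n]}(h_n^{(\nu)} g^{(\nu)})}$, one sets $G_n(g) := \int_{H^u_{L_n(g)}} \phi(h_n g b)\, dm^u(b)$ and $M_n(g) := m^u(H^u_{L_n(g)})$. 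Both functions are continuous in $g$, $G_n$ has compact support inside $K_0$ (since $\d{supp}(\phi)$ is compact in $H$), and the conditional average $G_n/M_n$ is bounded by $\|\phi\|_\infty$ uniformly in $n$.

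Next I rewrite the difference of the two evaluations as
$$\int \phi\, d\dr P((h_n)_*\mu_n, t_n) - \int \phi\, d\dr P((h_n)_*\mu, t_n) = \int (G_n/M_n)\, d(\tilde\mu_n - \tilde\mu_n'),$$
where $\tilde\mu_n$ (resp.\ $\tilde\mu_n'$) is the probability measure obtained by reweighting $\mu_n$ (resp.\ $\mu$) by $M_n$ and renormalizing. Since $|G_n/M_n|\leq\|\phi\|_\infty$, it suffices to prove $\|\tilde\mu_n - \tilde\mu_n'\|_\d{TV} \to 0$.

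Using the absolute continuity hypothesis, write $\mu_n = f_n\lambda$ and $\mu = f\lambda$, with $f_n \to f$ in $L^1(\lambda)$. The total variation distance between $\tilde\mu_n$ and $\tilde\mu_n'$ is then a matter of comparing the two densities $M_n f_n / \int M_n f_n\, d\lambda$ and $M_n f / \int M_n f\, d\lambda$ with respect to $\lambda$. The principal obstacle --- and the only nontrivial technical ingredient --- is a uniform oscillation estimate for $M_n$ on $K_0$: there should exist $C > 0$, independent of $n$, with $M_n(g) \leq C\, M_n(g')$ for all $g, g' \in K_0$ (on the locus where $M_n > 0$). This estimate comes from the algebraic structure of the size function $D$: sub-multiplicativity of the norms $|\cdot|_\nu$ together with the compactness of $K_0 K_0^{-1}$ yield $|h_n g|_\nu \asymp |h_n g'|_\nu$ uniformly in $n$, which via the explicit archimedean ($L_p$) and ultrametric (max-norm) formulas for $l_{[\nu, t]}$ translates into a uniform comparison of the fiber radii and hence of $M_n$. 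Once this is in place, $M_n$ factors out of both numerator and denominator up to a bounded multiplicative constant, reducing $\|\tilde\mu_n - \tilde\mu_n'\|_\d{TV} \to 0$ to the already-known $L^1$-convergence $\|f_n - f\|_{L^1(\lambda)} \to 0$, and completing the proof.
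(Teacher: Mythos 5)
Your proposal follows the same overall route as the paper -- reduce to total-variation convergence of the base measures and then push this through the $\dr P$-construction -- but the paper's own proof is a one-liner ($\mu_n-\mu\to 0$ in TV, ``this ensures that its translates under $h_n$ go to zero'') that simply asserts the passage through $\dr P$, whereas you make the normalization explicit and correctly isolate the nontrivial ingredient: a uniform oscillation bound for the fiber-volume function $M_n$ on the compact $K_0$. That is the right thing to identify, and your reweighting reduction to $\|\tilde\mu_n-\tilde\mu_n'\|_{\d{TV}}\to 0$ is sound.

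The gap is in the oscillation estimate itself. Sub-multiplicativity and compactness give $|h_ng|_\nu\asymp|h_ng'|_\nu$ uniformly, but this does \emph{not} transfer to the fiber radii $l_{[\nu,t_n]}(h_ng)$ once $h_nK_0$ approaches the boundary $\{D=t_n\}$. For instance with $|h^{ss}h^u|^2=|h^{ss}|^2+|h^u|^2$ one has $l(o)=\sqrt{t_n^2-|o|^2}$, and if $|h_ng|=t_n-1/n^2$ while $|h_ng'|=t_n-1$ (so the norms are still comparable), the ratio $l(h_ng')/l(h_ng)$ diverges like $n$, and so does $M_n(g')/M_n(g)$; in the extreme case $M_n$ vanishes on part of $K_0$. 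So the bound $M_n(g)\le C\,M_n(g')$ is simply false without an extra hypothesis of the form $\d{supp}((h_n)_*\mu_n)\subset H^{ss}_{(1-\varepsilon)t_n}$, exactly the hypothesis that appears in the sibling Lemma \ref{lem:limtranslate} and in Proposition \ref{pro:spheres} where this lemma is invoked. With that hypothesis, the fiber radii stay comparable to $t_n$, and your argument closes. (A smaller point: $M_n$ is \emph{not} continuous at non-archimedean places, since ball volumes are step functions of the radius; measurability, which is all you use, is what one should claim.) The paper's proof silently has the same issue -- you have not introduced a new error, but you should not present the sub-multiplicativity argument as settling the oscillation bound.
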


\begin{proof}
By hypothesis, the signed measure $\mu_n-\mu$ has a density going to zero in $L^1(\lambda)$. But all these densities are supported inside a compact set. Hence $\mu_n-\mu$ has a total variation going to zero : for all $\epsilon$ there is $n$ such that for all function on $H^{ss}$, we get:
$$|\int f d\mu_n -\int f d \mu| \leq \epsilon \d{max}(|f|)$$

This ensures that its translates under $h_n$ go to zero i.e. that $\dr P((h_n)_*(\mu_n,t_n))-\dr P(((h_n)_*\mu),t_n)$ go to $0$.
\end{proof}

\subsection{A lemma on linear representation}

The first equidistribution result we will prove is for projections of probability measures of the form $\dr P((a_n)_* l,t_n)$ where $l$ is a probability measure on $U^+$ absolutely continuous with respect to the Haar measure. But we need a result on the action of the support $S((a_n)_* l,t_n)$ of this measure: it sends every non-invariant point to $\infty$. For technical reasons, we need this property directionnally in $H^u$, i.e. along $1$-parameter subgroups in $H^u$. 

The situation of this section is the following one: let $(a_n)$ be a simplified sequence. Let $\Omega$ be an open and relatively compact subset of $U^+$. Let $(t_n)$ be a sequence of real numbers going to $\infty$ such that the sets $a_n\Omega$ are included in balls $H^{ss}_{t_n}$. 
Let $N^{ss}$ be the smallest normal subgroup of $H$ such that the projection of $a_n$ is bounded in $H/N^{ss}$.

\begin{lemma}\label{lem:representation}
Let $\rho=(\rho_\nu)_{\nu\in S}$ be a $K_S$-representation of $H$ in a finite dimensional $K_S$-module $V=\prod V_\nu$. Let $O$ be a $1$-parameter subgroup of $H^u$, $O_n$ be the set $\{o\in \Omega\times O \st \d{for some }\omega\in \Omega\d{, }D(a_n o)\leq t_n\}$. Let $N_O$ be the smallest subgroup of $H$ such that $a_nO_n$ stay in a compact in $H/N_O$.

Let $\Lambda$ be a discret subset of $V$ with no $N_O$-invariant points and $v_n$ a sequence of elements of $\Lambda$.

Then the sequence of sets $\rho (a_n O_n) v_n$ is not contained in any compact subset of $V$.
\end{lemma}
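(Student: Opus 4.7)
The strategy is to argue by contradiction, exploiting the minimality in the definition of $N_O$ together with the algebraicity of $\rho$. Suppose that the sets $\rho(a_n O_n)v_n$ all lie in a fixed compact $K \subset V$; the aim is to derive a contradiction with the hypothesis that $\Lambda$ contains no $N_O$-invariant vector.

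First, by the very definition of $N_O$, the image of $a_n O_n$ in $H/N_O$ sits in a fixed compact set, so after choosing a Borel section of $H \to H/N_O$ one obtains decompositions $a_n o = \kappa_{o,n}\,\nu_{o,n}$ with $\kappa_{o,n}$ in a fixed compact $K_H \subset H$ and $\nu_{o,n} \in N_O$. Since $\rho(K_H)$ is bounded in $\d{End}(V)$, the hypothesis $\rho(a_n o)v_n \in K$ forces $\rho(\nu_{o,n})v_n \in K'$ for some larger compact $K' \subset V$. Moreover, the family $\mathcal N := \{\nu_{o,n} : o \in O_n,\ n \in \dr N\}$ must be Zariski dense in $N_O$: otherwise its Zariski closure would be a proper algebraic subgroup $N' \subsetneq N_O$, and $a_n O_n \subset K_H \cdot N'$ would remain bounded modulo $N'$, contradicting the minimality of $N_O$.

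Since $\Lambda$ is discrete in $V$, after extraction either $v_n = v \in \Lambda$ is eventually constant, or $\|v_n\| \to \infty$. In the first case, $\rho(\mathcal N)v \subset K'$; Zariski density of $\mathcal N$ in $N_O$ combined with the algebraicity of $\rho$ then propagates the bound to the full orbit $\rho(N_O)v$. A standard orbit-closure argument for algebraic group actions on linear spaces yields $\rho(N_O)v = v$: the unipotent radical of $N_O$ fixes $v$ because its orbits on $V$ are either single points or unbounded, while the remaining reductive factor acts on $v$ with a compact orbit lying in the discrete set $\Lambda$, hence reduced to a single point. Thus $v \in \Lambda$ is $N_O$-invariant, contradicting the hypothesis. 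In the unbounded case $\|v_n\| \to \infty$, the same reasoning is applied in projective space: a subsequential limit direction $\tilde v$ of $v_n/\|v_n\|$ satisfies $\rho(\nu_{o,n})\tilde v \to 0$, and Zariski density of $\mathcal N$ together with algebraicity produces a proper $\rho(N_O)$-stable subspace containing $\tilde v$; analysing the components of $v_n$ transverse to this subspace and using once more the discreteness of $\Lambda$ together with the absence of $N_O$-invariant vectors rules out this case as well.

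The main technical obstacle is the propagation step from $\rho(\mathcal N)v$ bounded to $\rho(N_O)v = v$, together with its projective analogue in the unbounded case, carried out in the $S$-arithmetic $K_S$-module setting where $N_O$ need not be unipotent and $V$ has mixed archimedean and ultrametric components. This will require splitting the argument place by place and invoking the standard orbit-structure results for rational representations of algebraic groups over each $K_\nu$, together with a careful projectivisation argument to handle the case where the sequence $v_n$ escapes to infinity in $\Lambda$.
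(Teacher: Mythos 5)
The central step of your argument — ``Zariski density of $\mathcal N$ in $N_O$ combined with the algebraicity of $\rho$ then propagates the bound to the full orbit $\rho(N_O)v$'' — does not hold. Boundedness is a metric condition, and the set $\{g : \rho(g)v \in K\}$ for a compact $K$ is in general \emph{not} Zariski closed, so a bound on a Zariski-dense subset gives nothing on the whole group. For a minimal counterexample, let $N_O = \dr R$ act on $\dr R$ by translation and take $\mathcal N = \{1/n\}_{n\geq 1}$: this is Zariski dense in $\dr R$, $\rho(\mathcal N)v$ is bounded, yet $\rho(N_O)v = \dr R$. The same problem recurs in your unbounded case, where ``Zariski density of $\mathcal N$ together with algebraicity produces a proper $\rho(N_O)$-stable subspace'' is asserted but unjustified. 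There are also subsidiary issues: the Zariski closure of $\mathcal N$ is a variety, not a subgroup (you would at least need the group it generates), it is not given that $N_O$ is algebraic, and the minimality of $N_O$ is a topological condition on the whole family $a_n O_n$, so it is not immediately contradicted by $\mathcal N$ landing in a proper algebraic subgroup.

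The paper takes a different and more structural route that avoids any Zariski-density propagation. It splits on whether $a_n$ is bounded. In the bounded case the lemma reduces to polynomial unboundedness of a nonconstant unipotent orbit. In the unbounded case it uses an $N^{ss}$-invariant decomposition $V = V^{N^{ss}}\oplus W$, writes $v_n = v_n^{N^{ss}}+w_n$, reduces to a normalized limit $w_n\to w\neq 0$, and then works place by place: Lie--Kolchin produces a nonzero $U^+$-invariant component of $\rho(\Omega)w$, boundedness under $\rho(a_n)$ combined with Zariski density of the parabolic product $P^-U^+$ in $H^{ss}$ and irreducibility at each place forces $N^{ss}$ into the kernel, and what remains is again a polynomial unboundedness argument on the unipotent directions $O_n$. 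In short, the paper exploits the contraction structure of the simplified sequence $a_n$ and the parabolic geometry of $H^{ss}$, rather than an abstract Zariski-density argument for the family $\{\nu_{o,n}\}$. Your proposal would need to be rebuilt around some such structural input to close the gap.
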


This whole subsection will be the proof of this lemma.

\begin{proof}
We split this proof in two cases : whether the sequence $a_n$ is bounded or not. 

\emph{Case 1 : $a_n$ is bounded}

We may assume that all $a_n$ equal $1$.
Then $U^+$ is trivial, the $O_n$ is the ball $D(o)\leq t_n$ in $O$ and $N_O$ is  the group $O$. As $t_n$ go to $\infty$, we may extract a increasing subsequence of balls covering $O$. If $v_n$ is not bounded, as $Id$ belongs to $O_n$, then the lemma is proven. If not, as $\Lambda$ is discrete, we may assume that $v_n$ is constantly equal to some $v$ which is not $N_O$-invariant. Now the exponential function composed with $g\mapsto\rho(g)v$ gives us a polynomial function from the Lie algebra of $O$ to $V$, and $\rho(O)v$ is the image of this polynomial function. That means that this function is constant or unbounded. As it is not constant, it is unbounded, proving the lemma in this case.

\smallskip

\emph{Case 2 : $a_n$ is not bounded}

In this situation, the action of $a_n$ and $U^+$ alone send non-invariant points to $\infty$ (remark that $\Omega$ is included in $O_n$ by definition).

First of all, let $V^{N^{ss}}$ be the $N^{ss}$-invariant sub-module of $V$ and $W$ an $N^{ss}$-invariant complement. Write $v_n=v_n^{N^{ss}}+w_n$. If $w_n$ goes to $0$, by discreteness of $\Lambda$, $v_n^N$ goes to $\infty$. Let $C$ be a compact of $G$ such that $a_n O_n$ is included in $C N_O$. Then, by definition of $U^+$ and semisimplicity of $H^{ss}$, the sets $a_n U^+$ are included in $C N^{ss}$. And for any $\omega\in \Omega$, the sequence $\rho(a_n \omega) v_n=\rho(a_n \omega)(v_n^N)+\rho(a_n\omega)w_n$ belongs to $\rho(C)v_n^N+W$. Hence this sequence goes to $\infty$, proving the lemma in this case.

So we may assume that $w_n$ does not go to zero. Up to a renormalization and an extraction, we assume that $w_n$ converges to some non-zero element $w\in W$. It is enough to prove that the sets $\rho (a_n \Omega) w$ leave every compact of $V$. Making this reduction we loose the discreteness hypothesis on $\Lambda$ but we will not need it anymore.

\smallskip

We now prove the lemma by contradiction: suppose that the above sets stay in some compact. We prove first  that $w$ is $N^{ss}$-invariant and then $N$-invariant.

\smallskip

The first step is to show that we may assume that $w$ is $U^+$-invariant: let $V^+$ be the module of $U^+$-invariant points and $V^-$ its $a_n$-invariant complement. Note $p^+$ the projection on $V^+$ in the direction $V^-$. We have the following
\begin{lemma}
Let $\rho=(\rho_\nu)_{\nu\in S}$ be a $K_S$-representation of $H$ in a finite dimensional $K_S$-module $V=\prod V_\nu$. Let $U$ be a non-trivial unipotent subgroup, and $\Omega$ an open subset of $U$. 

Then the set $\rho(\Omega) w$ is not contained in any complement of the submodule $V^U$ of $U$-invariant points.
\end{lemma}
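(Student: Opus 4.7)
The plan is to combine two classical facts about algebraic unipotent groups: their representations are polynomial, and every non-trivial such representation admits a non-zero fixed vector (the Engel / Lie--Kolchin theorem). Note that the statement really needs $w \neq 0$ (otherwise $\rho(\Omega) w = \{0\}$ sits in every complement), which is implicit but will be arranged in the ambient application.

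First, I reduce to a single place. Since $V = \prod_{\nu\in S} V_\nu$, $U = \prod_\nu U_\nu$, and $V^U = \prod_\nu V_\nu^{U_\nu}$, a submodule complement $V'$ of $V^U$ decomposes as $V' = \prod_\nu V'_\nu$ with each $V'_\nu$ a $K_\nu$-complement of $V_\nu^{U_\nu}$ in $V_\nu$. As $\Omega$ is open in the product topology, it contains a rectangle $\prod_\nu \Omega_\nu$ with each $\Omega_\nu$ non-empty open, and the inclusion $\rho(\Omega) w \subseteq V'$ implies $\rho_\nu(\Omega_\nu) w_\nu \subseteq V'_\nu$ at every place. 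Because $w \neq 0$, I can fix one place $\nu_0$ with $w_{\nu_0} \neq 0$ and work there.

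Next, I invoke the polynomial identity theorem. Over the characteristic zero field $K_{\nu_0}$, the exponential identifies the unipotent group $U_{\nu_0}$ as a variety with its Lie algebra, so $u \mapsto \rho_{\nu_0}(u) w_{\nu_0}$ becomes a polynomial map $K_{\nu_0}^m \to V_{\nu_0}$. Composing with the linear projection of $V_{\nu_0}$ onto $V_{\nu_0}^{U_{\nu_0}}$ along $V'_{\nu_0}$ produces a polynomial vanishing on the non-empty open set $\Omega_{\nu_0}$. Since $K_{\nu_0}$ is infinite, this polynomial vanishes identically, hence $\rho_{\nu_0}(U_{\nu_0}) w_{\nu_0} \subseteq V'_{\nu_0}$.

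The final step is the Engel-type argument. Let $W \subseteq V_{\nu_0}$ be the $U_{\nu_0}$-invariant subspace spanned by $\rho_{\nu_0}(U_{\nu_0}) w_{\nu_0}$: it is non-zero (it contains $w_{\nu_0}$) and entirely inside $V'_{\nu_0}$. Engel's theorem forces the unipotent group $U_{\nu_0}$ to fix a non-zero vector in $W$, so $W \cap V_{\nu_0}^{U_{\nu_0}} \neq 0$; but this intersection lies in $V'_{\nu_0} \cap V_{\nu_0}^{U_{\nu_0}} = 0$, a contradiction. I expect the main obstacle to be largely book-keeping: ensuring the product structure of $K_S$-modules does not introduce any obstruction, and verifying that the single argument (polynomial vanishing on an open set, plus the unipotent fixed-vector theorem) applies uniformly at archimedean and non-archimedean places.
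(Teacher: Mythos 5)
Your proof is correct, and at its core it uses the same two pillars as the paper's argument: Zariski density of a non-empty open subset of a unipotent group, and the fixed-vector theorem for unipotent groups (Lie--Kolchin / Engel). You do, however, implement the density step differently, and the comparison is worth noting. The paper does not reduce to a single place up front; instead it takes $V'$ to be the $K_S$-span of $\rho(\Omega)w$, picks finitely many $\omega_1,\dots,\omega_k\in\Omega$ with $\rho(\omega_i)w$ generating $V'$, shrinks to a neighborhood $\Omega'$ of the identity with $\Omega'\omega_i\subseteq\Omega$, observes that $V'$ is then $\Omega'$-stable, and invokes Zariski density of $\Omega'$ in $U$ to conclude that $V'$ is $U$-stable before applying Lie--Kolchin place by place. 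You instead work at a single place $\nu_0$ with $w_{\nu_0}\neq 0$ from the start, compose with the linear projection $\pi_{\nu_0}$ onto $V_{\nu_0}^{U_{\nu_0}}$ along $V'_{\nu_0}$, and use the polynomial identity theorem (a polynomial vanishing on a non-empty Euclidean or $p$-adic open set vanishes identically) to pass from $\Omega_{\nu_0}$ to all of $U_{\nu_0}$; you then span $\rho_{\nu_0}(U_{\nu_0})w_{\nu_0}$ and apply Engel. Your version avoids the ``shrink $\Omega$ to a neighborhood of the identity'' device and replaces it with a slightly more hands-on projection-plus-polynomial argument; the paper's version is terser and keeps the $K_S$-module structure intact until the very last step. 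One minor point worth making explicit in your write-up: when $U_{\nu_0}$ happens to be trivial (which can occur even though $U$ is non-trivial overall), your chain already yields a contradiction directly, since then $V'_{\nu_0}=0$ while $\rho_{\nu_0}(\Omega_{\nu_0})w_{\nu_0}=\{w_{\nu_0}\}\neq\{0\}$; the invocations of the polynomial identity theorem and Engel in that degenerate case are vacuous but harmless. You are also right to flag the implicit $w\neq 0$ hypothesis, which the paper arranges in the surrounding text.
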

\begin{proof}
Once again we prove it by contradiction: suppose $\rho(\Omega) w$ generates some submodule $V'$ in direct sum with $V^U$. And let $\omega_1, \ldots , \,\omega_k$ be elements of $\Omega$ such that the $\rho(\omega_i)w$ generate $V'$. Then there is an neighborhood $\Omega'$ of the identity in $U$ such that all the $\Omega' \omega_i$ are included in $\Omega$.

And $V'$ is $\Omega'$ invariant. So it is invariant by the Zariski closure of $\Omega'$ i.e. by $U$. The Lie-Kolchin theorem implies that there is a non-zero $U$-invariant element in the $U$-invariant module $V'$ (to be very precise, you have to apply the Lie-Kolchin theorem at each place, restricting the representation in the obvious way). This is the contradiction: $V'$ cannot be in direct sum with $V^U$.
\end{proof}

So, there is some $\omega \in \Omega$ such that $p^+(\rho(\omega)w)$ is not zero. But we know that $\rho(a_n\omega)w$ is bounded. Hence $\rho(a_n) p^+(\rho(\omega)w)$ is bounded. Let us show that it implies that $N^{ss}$ is contained in the kernel of the representation:
\begin{lemma}
Let $v$ be a $U^+$-invariant and non-zero point of $V$ such that $\rho(a_n)v$ is bounded. Then $N^{ss}$ is contained in the kernel of the representation.
\end{lemma}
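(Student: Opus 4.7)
The plan is to use the weight-space decomposition of $V$ under a maximal split torus $A$ of $H^{ss}$, combined with the combinatorics of the simplified sequence $(a_n)$, to locate $v$ in a subspace on which $N^{ss}$ must act trivially.

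First, working place-by-place, I decompose $V_\nu = \bigoplus_\chi V_{\nu,\chi}$ into $A_\nu$-weight spaces and write $v = \sum_\chi v_\chi$. The simplified hypothesis says that for each character $\chi$, the scalar $|\chi(a_n)|$ is either bounded or tends to infinity; boundedness of $\rho(a_n)v = \sum_\chi \chi(a_n)v_\chi$ then forces $v_\chi = 0$ whenever $|\chi(a_n)|\to\infty$. So $v$ lies in the bounded-weight subspace $V^b$ spanned by the $V_\chi$ with $|\chi(a_n)|$ bounded.

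Next I interpret the $U^+$-invariance. The contracted group $U^+$ is generated by the root subgroups $U_\alpha$ for the roots $\alpha$ with $|\alpha(a_n)|\to\infty$; these form the positive roots of a parabolic $P^+ = L U^+$ of $H^{ss}$ whose Levi $L$ centralizes the asymptotic direction of $(a_n)$. Using complete reducibility (available since $K$ has characteristic zero), decompose $V$ into irreducible $H^{ss}$-components $\bigoplus_j W_j$ and project $v = \sum_j v_j$; each nonzero $v_j$ is $U^+$-invariant and lies in $W_j \cap V^b$. The $P^+$-highest weight $\Lambda_j$ of $W_j$ must then both pair non-negatively with the asymptotic direction of $(a_n)$ (by dominance) and evaluate boundedly on $(a_n)$, which forces this pairing to be zero. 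By definition of $N^{ss}$ as the smallest normal subgroup of $H$ outside which $(a_n)$ is bounded, this vanishing means $\Lambda_j$ is trivial on each simple factor of $N^{ss}$, whence $N^{ss}$ acts trivially on $W_j$. Summing over $j$ and using normality of $N^{ss}$ in $H$, we conclude that $N^{ss}$ acts trivially on the $H$-subrepresentation generated by $v$, which is the operational content of the claim in the outer proof.

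The main obstacle is the weight analysis in the mixed $S$-arithmetic setting: one must coherently match archimedean and non-archimedean root systems, replace Lie-algebraic arguments with genuinely algebraic-group arguments using root subgroups, and carefully confirm that the combination of \emph{dominance} of $\Lambda_j$ with \emph{boundedness on $(a_n)$} really forces $\Lambda_j$ to vanish on every $K_\nu$-simple factor of $H^{ss}$ in which $(a_n)$ is unbounded. A secondary subtlety is the passage from "$N^{ss}$ fixes $v$" to "$N^{ss} \subset \ker\rho$", which is harmless provided one restricts, as the downstream use of the lemma permits, to the $H$-invariant subspace of $V$ generated by $v$.
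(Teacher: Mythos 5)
Your proposal takes a genuinely different route from the paper. You rely on weight theory: decompose $V$ into $H^{ss}$-irreducibles $W_j$, locate $v$ in the $a_n$-bounded weight space $V^b$, and argue that the highest weight $\Lambda_j$ of each relevant summand must vanish on $N^{ss}$. The paper avoids weight combinatorics entirely: it observes that $W=\{w : \rho(a_n)w \text{ bounded}\}$ contains $\rho(P^-U^+)v$ (using $U^+$-invariance of $v$ together with boundedness of $a_n p a_n^{-1}$ for $p\in P^-$), uses Zariski density of the big cell plus irreducibility to conclude $W=V$, and then the fact that a bounded orbit of a unipotent group is a fixed point to deduce that $U^+\subset\ker\rho$; since the normal closure of $U^+$ in $H^{ss}$ contains $N^{ss}$, the lemma follows.

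Two of your asserted steps hide the real content and are currently gaps. First, the passage from ``$v_j$ is $U^+$-invariant and lies in $V^b$'' to ``$\Lambda_j(a_n)$ is bounded'' is not automatic, because $v_j$ need not be a highest-weight vector: you must use that the $U^+$-fixed subspace $W_j^{U^+}$ is an irreducible module for the Levi $L$ of $P^+$, with all weights differing from $\Lambda_j$ only by sums of roots of $L$, and these roots are by definition bounded on $(a_n)$. Second, the deduction that $\Lambda_j$ vanishes on \emph{every} simple factor of $N^{ss}$ --- not merely along the directions where $(a_n)$ is unbounded --- requires that on a connected Dynkin diagram a nonzero dominant weight is a strictly positive $\dr Q$-combination of all simple roots (positivity of the inverse Cartan matrix), hence must grow along \emph{any} unbounded direction of $A^+$ in that factor. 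You flag the second point as something to ``carefully confirm'', but not the first; both are genuine and need to be supplied before the proof is complete. Once they are, your argument works and gives a somewhat finer picture of where $v$ can sit; the paper's argument has the advantage of using only the Zariski-density and polynomial-boundedness mechanism already deployed throughout the rest of the section.
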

\begin{proof}
We may assume that at each place $\rho_\nu$ is an irreductible representation. 
First of all, let $W$ be the sub-$K_S$-module of $V$ containing all the vectors $w$ such that $\rho(a_n)w$ is bounded. Consider $P^-$ the opposite parabolic subgroup in $H^{ss}$:
$$P^-=\left\{h\in H\st a_n ha_n^{-1}\d{ remains bounded}\right\}\; .$$
Then it is clear that $\rho(P^-)v$ is included in $W$. By $U^+$ invariance of $v$, we even get that $\rho(P^-U^+)v$ is included in $W$. But $P^-U^+$ is open in $H$; so Zariski-dense. We deduce that $\rho(H)v$ is included in $W$ and by irreducibility that $W=V$.

Let us now prove that all the element of $V$ are $U^+$-invariant. We just have to prove it  on eigenvectors for the action of $a_n$ ($V$ is the sum of the eigenspaces for this action). Remind that, as  $a_n$ has determinant one and all the vectors have a bounded orbit under the action of $a_n$, all the eigenvalues of this action are of modulus $1$. So let $v'$ be in $V$ with $\rho(a_n)v'=\lambda_n v'$ and $\omega$ be some element of $U^+$. Fix an open neighborhood of the identity $\Omega$ in $U^+$. Then by definition there is some integer $i$ such that $a_i^{-1} \omega a_i$ belongs to $\Omega$. Hence $\rho(\omega)v'$ belongs to $\rho(a_i)(\rho(\Omega)\lambda_i^{-1}v')$. But the latter is included in some compact $B$ independent of $i$ because we have seen that all elements of $V$ have bounded orbit in $V$ and the sets $\rho(\Omega)\lambda_i^{-1}v'$ are contained in some compact.
So $\rho(U^+) v'$ is included in $B$. But $U^+$ is an unipotent subgroup hence $\rho(U^+)v'$ is the whole image of a polynomial function. It can be bounded if and only if it is constant. Hence $v'$ is $U^+$-invariant. 

We have just proven that every element of $V$ is $U^+$-invariant. Hence the kernel of the representation contains the normal subgroup generated by $U^+$, hence contains $N^{ss}$. 
\end{proof}

The situation is now simple: we may forget about the semisimple part because it acts trivially. And we just have an element $w$ of $V$ such that $\rho(O_n)w$ is bounded. Now for each $n$, the projection of $O_n$ in $O$ is a ball. If the $O_n$ are bounded, then $N_O=N^{ss}$ is semisimple and we are done. If not we may as in case 1 assume that the projections of $O_n$ on $O$ are increasing balls and $\rho(O_n)w$ may be bounded only if $w$ is $O$-invariant. Here $N_O$ is the subgroup generated by $N^{ss}$ and $O$ and $w$ is $N_O$-invariant.

In both cases we found the contradiction: $w$ is $N_O$-invariant. Hence the lemma \ref{lem:representation} is proved.
\end{proof}

\section{Equidistribution of dense orbits}\label{section:equidistribution}

The aim of this section is to prove the theorem \ref{the:Horbit}. First recall the theorem:
\begin{theorem*}[\ref{the:Horbit}]
Let $(G,H,\G)$ be a triple under study, $D$ a size function and $H_t$ the induced family of balls in $H$. Assume that every dominant subgroup $H'$ of $H$ has dense orbit in $\GG$.

Then the orbit of $H$ becomes equidistributed in $\GG$ with respect to $m_\GG$: $$\d{For all }\phi\in \mathcal C_c(\GG)\d{, }\frac{1}{m_H(H_t)}\int_{H_t}\phi(\pi(h))dm_H(h)\xrightarrow{t\to +\infty} \int_\GG\phi dm_\GG\;.$$
\end{theorem*}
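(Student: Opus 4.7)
The plan follows the Dani--Margulis--Shah strategy adapted to our $S$-arithmetic, non-reductive setting, combining the Cartan decomposition in $H^{ss}$, the Ratner--Tomanov measure classification (Theorem~\ref{the:ratner}), non-divergence (Theorem~\ref{the:nondiv}), and the linearisation tools (Theorem~\ref{the:DM} together with Lemma~\ref{lem:representation}). First, use the Cartan decomposition $H^{ss}=\bigcup_{d\in D}CdA^+C$ and the semidirect product $H=H^{ss}\rtimes H^u$ to rewrite the normalised integral $\frac{1}{m_H(H_t)}\int_{H_t}\phi\circ\pi\, dm_H$ as an average over $a\in A^+$ of the projected measures $\pi_*\dr P((a)_*l,t)$, where $l$ is the push-forward to $H^{ss}$ of Haar measure on $C\times D \times C$. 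Since $A^+$ splits into finitely many sub-cones on each of which every simple root is either bounded or tends to $+\infty$, it suffices to check that, for every simplified sequence $(a_n)$ with $t_n\to\infty$ and $D(a_n)\le t_n$, each weak-$*$ accumulation point $\mu$ of $\mu_n:=\pi_*\dr P((a_n)_*l,t_n)$ on $\GG$ equals $m_\GG$.

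Next, I would show that such a $\mu$ is invariant under $U:=U^+\cdot H^u$, where $U^+$ is the horospherical contracted by $(a_n)$. The $H^u$-invariance comes from the fact that the radii $l_{[\nu,t_n]}(a_n c)$ of the Haar fibres inside $\dr P(\cdot,t_n)$ tend to $+\infty$ and hence eventually exhaust $H^u$. The $U^+$-invariance is the classical contraction argument: for $u\in U^+$ one has $ua_n=a_n(a_n^{-1}ua_n)$ with $a_n^{-1}ua_n\to e$, and Lemma~\ref{lem:limtranslate} gives $(u)_*\mu_n-\mu_n\to 0$. The group $U$ is generated by its one-parameter unipotent subgroups, so Theorem~\ref{the:ratner} will apply once we know $\mu$ is a probability; this is ensured by Theorem~\ref{the:nondiv} applied to the polynomial maps $(c_1,c_2,h^u)\mapsto a_n d c_1c_2h^u$, whose second alternative (a Chevalley trap $\rho_k(a_n O_n\gamma)v_k\subset D_k$) is ruled out by Lemma~\ref{lem:representation} exactly as in the next step.

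Having obtained a $U$-invariant probability $\mu$, decompose $\mu=\sum_{{\bf P}\in\mathcal F}\mu_{\bf P}$ via Theorem~\ref{the:ratner}. Suppose $\mu_{\bf P}\neq 0$ for some proper class $\mathcal F$ subgroup ${\bf P}\subsetneq\bG$. Theorem~\ref{the:DM} applied to the same polynomial maps yields, along a subsequence, a compact $W_0\supset F(P,U)$ and $\gamma_n\in\G$ with $\eta_P(a_n O_n\gamma_n)\subset W_0$; discreteness of $\eta_P(\G)$ makes $\gamma_n$ eventually constant equal to some $\gamma$. Lemma~\ref{lem:representation}, applied with $\Lambda=\eta_P(\G)$ and $v_n=\eta_P(\gamma)$, forces $\eta_P(\gamma)$ to be invariant under the subgroup $N_O$ produced by the lemma, hence $\gamma^{-1}N_O\gamma\subset N_1(P)$. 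Since $N_1(P)\G/\G$ is a proper closed subset of $\GG$ (by the lemma of Section~4 together with the discreteness of $\eta_P(\G)$), the orbit of $N_O$ on $\gamma\G$ cannot be dense.

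The decisive and most delicate step is to argue that the subgroup $N_O$ thus obtained is \emph{dominant} in $H$, which contradicts the hypothesis and thereby eliminates every proper ${\bf P}$, leaving only $\mu = m_\GG$. By construction, $N_O$ contains both the expansion directions of $(a_n)$ in $H^{ss}$ and the one-parameter unipotent direction along which the slicing is carried out; the asymptotic formula of Lemma~\ref{equivalentvolume} should then give $m_H(C\cdot(N_O)_t)/m_H(H_t)\not\to 0$, but making this uniform across the finitely many sub-cones of $A^+$ and over the relevant class $\mathcal F$ subgroups is where I expect the main technical effort. Once dominance is secured, every subsequential limit equals $m_\GG$, so the full net $\mu_n$ converges to $m_\GG$, and reassembling through the Cartan slicing yields the stated equidistribution.
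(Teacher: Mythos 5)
Your overall scaffolding (Cartan slicing, Ratner--Tomanov rigidity, non-divergence, linearization via Theorem~\ref{the:DM} and Lemma~\ref{lem:representation}) matches the paper's, but two of your key steps are wrong as stated, and the second one hides a genuine logical gap that the paper resolves differently.

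First, the claim that a weak-$*$ limit $\mu$ is invariant under $U^+\cdot H^u$ is not correct. That the radii $l_{[\nu,t_n]}$ tend to $+\infty$ only tells you the slices in $H^u$ exhaust the group; it does not give a F{\o}lner property. Already for the Heisenberg group with the max-norm ball, a left translate by a non-central element $(x_0,0,0)$ shifts the $z$-coordinate by $x_0 y$, which is of the same order as the ball radius, so the symmetric difference is a constant fraction of the ball. The paper's Step~2 in the proof of Proposition~\ref{pro:unipotent} accordingly establishes invariance only under the center $Z$ of $H^u$ when $a_n$ is bounded (and only under $U^+$ when $a_n$ is unbounded); the remaining $H^u$-directions are never handled by invariance but by the one-parameter polynomial slicing in Theorem~\ref{the:DM} together with Lemma~\ref{lem:representation}.

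Second, and more seriously, your final step wants to show that the subgroup $N_O$ produced by Lemma~\ref{lem:representation} is always dominant, so that the hypothesis ``dominant $\Rightarrow$ dense orbit'' yields a contradiction. That is not true in general: for $a$ ranging over a given cone of $A^+$, the associated $N$ (the smallest normal subgroup capturing the support of $\dr P((a)_*l,t)$ up to compacts) may well be non-dominant. The paper's structure is designed exactly to avoid needing your claim. Proposition~\ref{pro:unipotent} and Proposition~\ref{pro:spheres} only assume that $N\G$ is dense, not that $N$ is dominant, and the dominance hypothesis enters only at the assembly stage (Corollary~\ref{coro:spheres} and the final part of Section~\ref{section:equidistribution}): there one fixes the maximal non-dominant normal subgroups $N_1,\ldots,N_k$ and a compact $B$, shows that slices of the Cartan decomposition whose support falls inside $B\bigcup N_i$ contribute a vanishingly small fraction of $m_H(H_t)$ (precisely because the $N_i$ are non-dominant), and applies the propositions only to the remaining slices, for which $N$ must be dominant, hence dense. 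Without this exceptional-set argument the proof does not close: the `decisive and most delicate step'' you flag is not a uniformity issue but a false intermediate claim, and the correct fix is to keep the density assumption in the core proposition and separately bound the measure of the bad slices.
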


We use in this section the rigidity of the dynamic of unipotent flows reviewed in the previous section. The article of Shah \cite{shah} is the main source of inspiration for this section.

\subsection{Equidistribution over unipotent subgroups}

The first equidistribution result is the following one: if $a_n$ is simplified and $l$ a probability measure on $U^+$, then the projections of $\dr P((a_n)_*l,t_n)$ in $\GG$ become equidistributed with respect to the Haar measure $m_\GG$ if its support $\d{Supp}((a_n)_*l,t_n)$ does not stay close to a group with closed orbit:

\begin{proposition}\label{pro:unipotent}
Let $(G,H,\G)$ be a triple under study with a size function $D$. Let $t_n$ be a sequence of positive number going to $+\infty$ and $(a_n)$ be a simplified sequence in $A^+$, $U^+$ the contracted unipotent subgroup of $H^{ss}$ and $l$ a measure on $U^+$ compactly supported and absolutely continuous with respect to the Haar measure. Let $N$ be the smallest normal subgroup of $H$ such that the projections of $\d{Supp}((a_n)_*l,t_n)$ remain in a compact subset in $H/N$. Assume eventually that $N\G$ is dense in $G$.

Then we have the following limit in the space of probability on $\GG$:
$$\lim_{n\to\infty} \pi_*(\dr P((a_n)_*l,t_n)) = m_\GG\; ,$$
that is, for every function $\phi$ continuous with compact support on $\GG$, we have:
$$\int_{H} \phi(x\G/\G) d\dr P((a_n)_*l,t_n)(x) \xrightarrow{n\to\infty}\int_\GG \phi dm_\GG\; .$$
\end{proposition}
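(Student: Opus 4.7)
Following Shah's scheme, I will show that every weak-$\ast$ cluster point $\mu$ of the probability measures $\mu_n:=\pi_\ast\dr P((a_n)_\ast l,t_n)$ is $m_{\GG}$. The argument has three steps: tightness, invariance under a large unipotent subgroup, and rigidity (Ratner) combined with a final nondivergence argument to rule out lower-dimensional components.

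Tightness will follow from Theorem~\ref{the:nondiv} applied to the polynomial parametrization $(u,v)\mapsto a_n u v$ of $\d{Supp}\,\dr P((a_n)_\ast l,t_n)$: this is a polynomial map of bounded degree from a cube $B_n\subset(K_S)^{\dim U^++\dim H^u}$ to $G$. If alternative~(2) failed along a subsequence, there would exist $\gamma_n\in\G$ and a Chevalley datum $(\rho_k,v_k)$ with $\rho_k(a_nB_n\gamma_n)v_k$ contained in a compact of $V_k$. Lemma~\ref{lem:representation}, applied successively to one-parameter subgroups of $H^u$ spanning the divergent directions of the support, together with discreteness of $\rho_k(\G)v_k$, would force some $\gamma_nv_k$ to be fixed by $N$, i.e.\ $N\subset\gamma_nN_1(P_k)\gamma_n^{-1}$. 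Since $N_1(P_k)\G/\G$ is closed and proper in $\GG$, the coset $N\gamma_n\G/\G\subset\gamma_nN_1(P_k)\G/\G$ would sit inside a closed proper subset, contradicting density of $N\G$ in $G$. So $(\mu_n)$ is tight and every weak limit $\mu$ is a probability.

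Next I will show that $\mu$ is invariant under a unipotent subgroup $W\subset H$ whose normal closure in $H$ equals $N$. For $u\in U^+$, the identity $u(a_n\omega)=a_n(a_n^{-1}ua_n)\omega$ with $a_n^{-1}ua_n\to e$, combined with Lemmas~\ref{lem:limtranslate}--\ref{lem:limsupport}, yields that $u_\ast\mu_n-\mu_n\to 0$. For any $v$ in a one-parameter unipotent subgroup of $H^u$ corresponding to a divergent direction of the support, the $H^u$-conditional of $\dr P((a_n)_\ast l,t_n)$ over $a_n\omega$ is the normalized Haar measure on an $H^u$-ball of radius going to infinity, so compact translations in this direction perturb $\mu_n$ by $o(1)$. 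Let $W$ be the unipotent subgroup generated by $U^+$ and these divergent $H^u$-directions: it is generated by one-parameter unipotent subgroups, and by construction its normal closure in $H$ equals $N$. Then apply Theorem~\ref{the:ratner} to decompose the $W$-invariant probability $\mu$ as $\sum_{\mathbf{P}\in\mathcal{F}}\mu_\mathbf{P}$, with $\mu_\mathbf{P}$ supported on $\pi(X(P,W)-S(P,W))$. Suppose some $\mu_\mathbf{P}$ with $\mathbf{P}\subsetneq\mathbf{G}$ has positive mass; pick a compact $C\subset X(P,W)\G/\G$ of positive $\mu$-mass and apply Theorem~\ref{the:DM} to the Chevalley datum $(\rho_P,v_P)$. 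Since $\mu_n$ of a fixed neighborhood of $C$ does not tend to $0$, alternative~(1) must occur for infinitely many $n$, producing $\gamma_n\in\G$ with $\rho_P(a_nB_n)(\gamma_nv_P)\subset W_0$ compact in $V_P$. Iterating Lemma~\ref{lem:representation} as in the tightness step and invoking density of $N\G$ yields a contradiction by the same mechanism. Hence only $\mathbf{P}=\mathbf{G}$ contributes, forcing $\mu=m_{\GG}$.

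The main obstacle I foresee is the second step: organising the invariance argument so that the unipotent $W$ whose invariance of $\mu$ one extracts really does admit $N$ as its normal closure in $H$, so that the hypothesis "$N\G$ dense" can be cashed in at the end. Pinning down which $H^u$-directions are divergent in $\d{Supp}\,\dr P((a_n)_\ast l,t_n)$, and matching them with the directions in $H^{ss}$ captured by $U^+$ via the action of $a_n$, is the technical heart of the argument; this is where the semidirect decomposition $H=H^{ss}\rtimes H^u$ and the two structural assumptions on the size function (monotonicity in both factors, algebraic norms) become essential.
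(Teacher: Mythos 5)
Your overall architecture (tightness, unipotent invariance, Ratner plus a directional exclusion argument) is the one the paper follows, and your tightness step and your final step both correctly identify Theorem~\ref{the:nondiv}, Theorem~\ref{the:DM} and Lemma~\ref{lem:representation} as the engine, including the need to work direction by direction in $H^u$ because the parameter domains are not cubes. But your second step contains a genuine gap that the paper is careful to avoid.

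You claim that $\mu$ is invariant under a unipotent subgroup $W$ \emph{whose normal closure in $H$ equals $N$}, generated by $U^+$ together with the divergent $H^u$-directions, and you justify the $H^u$-part by saying that ``compact translations in this direction perturb $\mu_n$ by $o(1)$'' because the conditional measure on the $H^u$-fiber is normalized Haar on a ball of radius tending to infinity. That heuristic is fine on an abelian $H^u$, but $H^u$ is in general a nonabelian nilpotent group, and left translation by $\exp(v)$ does \emph{not} act as a translation in exponential coordinates: Baker--Campbell--Hausdorff introduces commutator terms of the same order as the ball radius. The paper sidesteps exactly this difficulty by proving invariance only under the \emph{center} $Z$ of $H^u$ in the bounded-$a_n$ case, using the identity $\exp(z)\exp(u)=\exp(z+u)$ which holds precisely because $z$ is central (see the use of $P(z+u)=P(z)P(u)$ in the paper's Step~2); in the unbounded-$a_n$ case it proves invariance only under $U^+$ using the contraction and Lemma~\ref{lem:limsupport}. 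For non-central directions of $H^u$ the claim that the ball is asymptotically translation-invariant is not established, and one should not expect to get it for free. Moreover, even if you could push the invariance through for non-central directions, your step~2 would be over-engineered: as the paper's Step~3 shows, once you have \emph{some} unipotent invariance to enter the class-$\mathcal F$ decomposition, the heavy lifting that actually uses ``$N\G$ dense'' is done by the directional application of Lemma~\ref{lem:representation}, not by the invariance group itself. Your proposal already replicates that directional argument in its third step, so the stronger step~2 claim buys nothing and costs you the one step that is both unjustified and unnecessary.

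One smaller point, in both the tightness step and the final step: Lemma~\ref{lem:representation} produces an $N_O$-invariant point for the \emph{particular} $1$-parameter subgroup $O$ under consideration, not an $N$-invariant point. The passage from ``$N_O$-invariant for a positive-measure set of directions $O$'' to ``$N$-invariant'' needs the observation that such a set of $O$'s generates $N$; you should state it rather than jump to ``fixed by $N$.''
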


The proof of this proposition is the core of the theorem \ref{the:Horbit}. We will use here the theory of polynomial orbits and Ratner's theorem exposed above, together with lemma \ref{lem:representation}. The derivation of theorem \ref{the:Horbit} from this proposition won't present any major difficulty.

\begin{proof}
We first show that any weak limit of the sequence studied in the proposition is a probability invariant by some unipotent subgroup. Then we will use the theory developed and the previous lemma to show that it can only be the Haar probability measure on $\GG$.

So consider the sequence of probability measures  $\pi_*(\dr P((a_n)_*l,t_n))$ and $\mu$ a weak limit. The first step will be to prove that $\mu$ is a probability measure on $\GG$. The second one will be an invariance of $\mu$ by some unipotent subgroup, thus allowing the use of the tools reviewed. Eventually we will prove that this $\mu$ is the Haar measure on $\GG$, proving the proposition.

The group $U^+$ is a unipotent subgroup of $G$ of dimension say $m$. Hence the exponential map form its Lie algebra to it is a polynomial map. Up to adding variables, we have a polynomial parametrisation $exp_1$ from  $K_S^m$ to $U^+$. The measure $l$ is absolutely continuous with respect to $(exp_+)_*(\theta_m)$.

In the same way, look at the projections of $\d{Supp}((a_n)_*l,t_n)$ to $H^u$. They are product  of balls $(H_\nu^u)_{r_n(\nu)}$ of radius some $r_n(\nu)$.
Moreover we have a polynomial parametrisation $exp_2$ from $K_S^r$ to $H^u$ which verifies that $(exp_2)_*(\theta_r)=m_{H^u}$.  And the measure $(a_n^{-1})_*[\dr P((a_n)_*l,t_n)]$  is absolutely continuous with respect to the image under $exp_1\times exp_2$ of the Haar measure $\theta_m\otimes \theta_r$ on $K_S^m\times K_S^r$.
We even get a uniformity result on the absolute continuity :

\begin{lemma}\label{lem:abscon}
Let $C$ be a positive real number. There exist a cube $B$ in $K_S^m$, a sequence of subsets $B_n$ in $K_S^r$ and an $\varepsilon>0$ such that for all measurable subset $E$ in $\GG$ we have :\\
$\textrm{If } \frac{1}{\theta_m(B)\theta_r(B_n)}\pi'_*\left((exp_1)_*(\theta_m)\otimes (exp_2)_*(\theta_r)\right)(E)\leq \varepsilon$\: \begin{flushright}then $\pi'_*((a_n^{-1})_*[\dr P((a_n)_*l,t_n)])(E)\leq \frac{C}{2}$.\end{flushright}
\end{lemma}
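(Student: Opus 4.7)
The plan is to compute $(a_n^{-1})_*[\dr P((a_n)_*l,t_n)]$ explicitly and then choose $B$ and $B_n$ so that this probability measure has bounded $L^1$-density with respect to normalized Haar measure on $B\times B_n$, uniformly in $n$. I would first unravel the pushforward: writing a generic element of $H$ as $h=o\cdot b$ with $o\in H^{ss}$, $b\in H^u$, left multiplication by $a_n^{-1}$ affects only the semisimple factor, so the change of variable $o\mapsto a_no$ in the formula for $m((a_n)_*l,t_n)$ gives
\[
(a_n^{-1})_*\bigl[\dr P((a_n)_*l,t_n)\bigr]=\frac{1}{Z_n}\,f_l(o)\,\mathbf{1}_{b\in(H^u)_{l_{[\cdot,t_n]}(a_no)}}\,dm_{U^+}(o)\,dm_{H^u}(b),
\]
where $\Omega$ denotes the compact support of $l$, $dl=f_l\,dm_{U^+}$ with $f_l\in L^1(m_{U^+})$, and $Z_n=\int_\Omega f_l(o)V_n(o)\,dm_{U^+}(o)$ with $V_n(o):=m_{H^u}\bigl((H^u)_{l_{[\cdot,t_n]}(a_no)}\bigr)$.

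Next I would fix the reference sets. Take $B$ to be a cube in $K_S^m$ with $exp_1(B)\supset\Omega$, and define $B_n:=exp_2^{-1}\bigl(\prod_{\nu\in S}(H^u_\nu)_{r_n(\nu)}\bigr)$ with $r_n(\nu):=\sup_{o\in\Omega}l_{[\nu,t_n]}(a_no)$. Since $(exp_2)_*\theta_r=m_{H^u}$, this gives $\theta_r(B_n)=m_{H^u}\bigl(\prod_\nu(H^u_\nu)_{r_n(\nu)}\bigr)$. Hypothesis (1) of the size function, combined with the compactness of $\Omega$, produces a constant $K\ge 1$ independent of $n$ such that $l_{[\nu,t_n]}(a_no)\ge r_n(\nu)/K$ for all $o\in\Omega$ and $\nu\in S$. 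Since $H^u_\nu$-ball volumes are polynomial in their radius, it follows that $V_n(o)\ge c_1\theta_r(B_n)$ for a constant $c_1>0$ uniform in $n$ and $o\in\Omega$, whence $Z_n\ge c_1\|f_l\|_1\theta_r(B_n)$.

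For any measurable $E\subset H$, I would then split $\mu_n(E):=(a_n^{-1})_*[\dr P((a_n)_*l,t_n)](E)=I_1+I_2$ according to whether $f_l(o)\le M$ or $f_l(o)>M$. Using $V_n(o)\le\theta_r(B_n)$, the piece $I_2$ is bounded above by $\theta_r(B_n)\eta(M)/Z_n\le\eta(M)/(c_1\|f_l\|_1)$, where $\eta(M):=\int_{\{f_l>M\}}f_l\to 0$ as $M\to\infty$; this estimate is independent of $E$. The piece $I_1$ is bounded by $(M/Z_n)$ times the Haar measure of $E\cap(exp_1(B)\cdot exp_2(B_n))$, i.e.\ by $(M\theta_m(B)/(c_1\|f_l\|_1))\,\tilde\lambda(E)$, where $\tilde\lambda$ denotes the normalized reference probability. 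Given $C>0$, pick $M$ so that $\eta(M)/(c_1\|f_l\|_1)<C/4$, then $\varepsilon$ so that $\tilde\lambda(E)\le\varepsilon$ forces $I_1<C/4$; projecting to $\GG$ via $\pi'$ yields the required inequality.

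The main obstacle is the uniform fiber-volume comparison $V_n(o)\asymp\theta_r(B_n)$ for $o\in\Omega$. Without it, $Z_n$ could be much smaller than $\theta_r(B_n)$ and the density of the pushforward against the reference measure could blow up with $n$, destroying any hope of a uniform absolute continuity bound. This is precisely where the monotonicity hypothesis (1) of the size function enters essentially, and it is the reason that condition was built into the setting from the start.
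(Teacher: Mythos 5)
Your proof is correct and follows essentially the same route as the paper: you choose $B$ with $exp_1(B)\supset\Omega$ and $B_n$ as the $exp_2$-preimage of the product of balls of radii $r_n(\nu)=\sup_{o\in\Omega}l_{[\nu,t_n]}(a_n o)$, then use hypothesis (1) of the size function (``orthogonality'') together with compactness of $\Omega$ to get a uniform-in-$n$ lower bound on the fiber volumes $V_n(o)$ relative to $\theta_r(B_n)$. The only cosmetic difference is that you spell out the absolute-continuity conclusion via an explicit truncation of the density $f_l$ at level $M$, whereas the paper simply invokes the fact that $l$ is absolutely continuous with respect to $(exp_1)_*\theta_m$ once the relative measure of the support is bounded below; both say the same thing.
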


\begin{proof}
We choose $B$ to be a cube in $K_S^m$ such that $\Omega$ is included in $exp_1(B)$ and $B_n$ to be the preimage of $\prod_\nu H^u_{r_n(\nu)}$ under $exp_2$.

We claim that for a set of positive measure of element $\omega$ in $\Omega$, the ball $\{u\in H^u\st D(a_n \omega u)\leq t_n\}$ contains the product of balls of radius $\frac{r_n(\nu)}{2}$. This is a direct consequence of the fact that $\Omega$ is a compact  and the hypothesis made on $D$; namely the so-called orthogonality between the semisimple part and the unipotent part.

Hence in the set $exp_1(B)\times exp_2(B_n)$, the set $a_n^{-1}\d{Supp}((a_n)_*l,t_n)$ is of positive and bounded from $0$ relative measure, i.e for some $C>0$, for all n: 
$$\frac{\left((exp_1)_*(\theta_m)\otimes (exp_2)_*(\theta_r)\right)[a_n^{-1}\d{Supp}((a_n)_*l,t_n)]}{\theta_m(B)\theta_r(B_n)}\geq C
$$

As $(a_n^{-1})_*[\dr P((a_n)_*l,t_n)]$ is the restriction of the measure $l\otimes(exp_2)_*(\theta_r)$ to its support $\d{Supp}((a_n)_*l,t_n)$ renormalized to be a probability measure, and $l$ is absolutely continuous with respect to $(exp_1)_*(\theta_m)$ the conclusion of the lemma follows.
\end{proof}

{\bf Step 1:}

The measure $\mu$ is a probability measure on $\GG$.

\begin{proof}
This result is quite classical, at least in the setting of Lie groups. We will of course use the theorem \ref{the:nondiv}. Moreover it is enough to prove it for the sequence of measures $(a_n)_*(exp_1)_*(\theta_m)\otimes (exp_2)_*(\theta_r)$ restricted to $B\times B_n$ thanks to the previous lemma.

Consider the functions $\Theta_n(t,s)=a_n exp_1(t)exp_2(s)$. They are polynomials of fixed degree. Fix some $0<\epsilon<1$.
We want to find a compact set $D$ in $\GG$ such that the images of all (but a finite number) the function $\Theta_n$ are included inside this compact except for a set of relative measure at most $\epsilon$.

We claim now that the subset $D$ given to us by theorem \ref{the:nondiv} is convenient. The strategy seems clear : apply theorem \ref{the:nondiv} and then show that the second part of the alternative is impossible for all but finitely many $n$. But a difficulty appears : the sets $B\times B_n$ on which we look at the functions $\Theta_n$ are not cube. We overcome this difficulty in a somewhat artificial way : we restrict our attention $1$-parameter subgroups $O$ of $H^u$ instead of the whole $H^u$ (exactly those subgroups which appear in lemma \ref{lem:representation}). We are then able to recombine these $1$-dimensional estimates to get the wanted result.

So consider $O$ a $1$-parameter subgroup in $H^u$ and $L$ its Lie algebra in the Lie algebra of $H^u$. $L$ is a line, and (up to the choice of a basis vector in $L$) for $n$ big enough, $L_n=L\cup B_n$ is a ball in $K_S$.

Hence we may apply theorem \ref{the:nondiv} to the functions $\Theta_n$ restricted to $B\times L_n$ which is a cube. And we know, using lemma \ref{lem:representation}, that the action of $a_n exp_1(B)\times exp_2(L_n)$ sends the points $v_k$ outside of the compact $D_k$ unless it is invariant by the group $N_O$.

So for $n$ big enough, either all the points $v_k$ appearing in theorem \ref{the:nondiv} are invariant under $N^{ss}$ and $O$ or  the whole cube $B\times L_n$ but a set of relative measure at most $\epsilon$ is mapped inside $D$.
Now the first part of the alternative means that the subgroup $N_O$ is included in the intersection of the parabolic subgroups $P_k$ and as a corollary its orbit in $\GG$ is closed. So this may happen only along a negligeable set of directions $O$ : as any set $B$ of positive measure of directions $O$ generates $H^u$ itself, the $N_O$'s for $O$ in $B$ generates the subgroup $N$ (smallest normal subgroup such that $\d{Supp}((a_n)_*l,t_n)$ is bounded in $H/N$). And we assumed the group $N$ has a dense orbit in $\GG$.

So for $n$ big enough, the total mass of points $(t,s)\in B\times L_n$ such that $\Theta_n(t,s)$ does not belong to $D$ does not exceed $2 \epsilon$ times the mass of $B\times L_n$.
\end{proof}

{\bf Step 2:}

The probability measure $\mu$ is left-invariant by some unipotent subgroup $Z$.

\begin{proof}
We also handle differently the cases according to the behaviour of $a_n$:

\emph{Case 1 : $a_n$ is bounded} 

We may assume that $a_n$ is constantly equal to $Id$ and $U^+$ is restricted to $\{Id\}$. Hence the set $U_n=\d{Supp}(Id,t_n)$ is an increasing sequence of balls of radius $t_n$ in $H^u$ and the probability measure $\dr P(\omega, t_n)$ is the Haar measure of $H^u$ restricted to $U_n$. 

Let $Z$ be the center of the unipotent group $H^u$, $\mathfrak z$ its Lie algebra. Then the polynomial map $P$ given by the composition of the representation choosen to define the norm and the exponential map from the Lie algebra $\mathfrak h^u$ to $H^u$ is proper and verifies for all $z\in \mathfrak z$ and $u\in \mathfrak h^u$:
$$P(z+u)=P(z)P(u)$$

Hence, for a fixed $z\in \mathfrak z$, the "norm" $D(\exp(z)\exp(u))$ is equivalent to  $D(\exp(u))$, as $P(z+u)=P(u)+O(P'(u))$. This proves that the ratio $\frac{m_{H^u}(\d{exp}(z)U_n\cap U_n)}{m_{H^u}(U_n)}$ tends to $1$, which means that $\mu$ is left-invariant by $Z$. 

\smallskip

\emph{Case 2 : $a_n$ is not bounded}

Then, by construction $a_n$ has a contracting action on $U^+$. Moreover $u^+_*\mu$ is the limit of $u^+_*\pi_*(\dr P((a_n)_*l,t_n))$. And the last one may be rewritten $\pi_*\left(\dr P[(a_n)_*(a_n^{-1}u^+a_n)_*l,t_n]\right)$. As $a_n^{-1}u^+a_n$ goes to $Id$,  lemma \ref{lem:limsupport} implies that $\mu$ is $U^+$ invariant.
\end{proof}

Hence we may use all the tools presented: there exists a class $\mathcal F$-subgroup ${\bf P}$ of $\bG$ such that $\mu(X(P,Z))$ is positive. We want to show that ${\bf P}=\bG$. This is the third and final step:

{\bf Step 3:}

Any class $\mathcal F$-subgroup ${\bf P}$ such that $\mu(X(P,Z))>0$ is the group $G$.

\begin{proof}
We will naturally use the theorem \ref{the:DM}. Fix a compact $C$ of $X(P,V)$ of positive measure.

Using lemma \ref{lem:abscon}, we get an $\varepsilon$ such that for all measurable subset $E$ in $C$ we have :

$\textrm{If } \frac{1}{\theta_m(B)\theta_r(B_n)}\pi_*\left((exp_1)_*(\theta_m)\otimes (exp_2)_*(\theta_r)\right)(E)\leq \varepsilon$
\begin{flushright} $\textrm{ then }\pi_*\left((a_n^{-1})_*[\dr P((a_n)_*l,t_n)]\right)(E)\leq \frac{\mu(C)}{2}\;$\end{flushright}

Once again we will apply the theorem \ref{the:DM} directionally to the function $\Theta_n(t,s)=a_n exp_1(t)exp_2(s)$, restricted to some $B\times L_n$ ($L_n$ being a ball in the Lie algebra $L$ of a $1$-parameter subgroup $O$ in $H^u$), to the compact $C$ and the $\varepsilon$ just defined. Note $\theta$ a normalized Haar measure on $L$. Then there exists a compact $D$ of $F(P,V)$ such that for all neighborhood $W_0$ of D there exists a neighborhood $W$ of $C$ such that for all $n$ we get the alternative:
\begin{itemize}
\item There exists $\gamma_n$ in $\Gamma$ such that $\eta(\Theta_n(B\times L_n) \gamma_n)\subset W_0$
\item $\theta_m\otimes \theta(\{t,s\in B\times L_n \d{ such that } \Theta_n(t,s)\G/\G \in W\})<\varepsilon \theta_m\otimes \theta(B\times L_n)$
\end{itemize} 

Now fix any neighborhood $W_0$ of $D$ and suppose we are in the second case of the previous alternative. Then by construction, we have: 
$$
\frac{1}{\theta_m\otimes \theta(B\times B_n)}\pi_*((exp_1)_*(\theta_m)\otimes (exp_2)_*(\theta))(a_n^{-1})(W)<\varepsilon\; .
$$

But we have $\pi_*(\dr P((a_n)_*l,t_n))(W)>\frac{\mu(C)}{2}$ as $W$ contains $C$ and the measures $\pi_*(\dr P((a_n)_*l,t_n))$ converges to $\mu$. By definition of $\varepsilon$, there is a set of $1$-parameter subgroups $O$ of positive measure for which the previous inequality does not hold for all $n$ big enough.

Now we use the lemma \ref{lem:representation}. Consider the representation $\rho$ of $G$ in the $K$ module $V$ associated to ${\bf P}$ via Chevalley's theorem. And restrict it to a representation of $H$ in $V$. Let $\Lambda$ be the discrete set $\eta(h_0\Gamma)$. We want to show that one of this point is invariant under the action of the group $N_O$ (see lemma \ref{lem:representation}). But this is a direct application of the lemma \ref{lem:representation}: the sets $\rho(a_nB\times L_n)\eta(\gamma_n)=\eta(\Theta_n(B\times B_n))$ are included in $W_0$ hence bounded. The conclusion of lemma \ref{lem:representation} being violated, the hypothesis is not fulfilled: one of the points $\gamma_n$ is $N_O$-invariant.

And now we may intervert the quantifiers without loosing everything : there is an integer $n$ such that $\gamma_n$ is invariant under $N_O$ for a set of positive measure of directions $O$. As previously noted, this point is $N$-invariant ($N$ being the smallest normal subgroup of $H$ containing all the sets $\d{Supp}((a_n)_*l,t_n)$ in a compact neighbourhood).

We have done most of the work. Let us conclude, using notations and results of the section \ref{sec:ratner}: $N$ is included in $\gamma_n^{-1} N_1(P) \gamma_n$. So the projection of $N_1(P)$ in $\GG$ contains a translate of the projection of $N$. But the latter is dense and the first one is closed : $N_1(P)$ projects onto $\GG$ hence is Zariski-dense in $G$. We conclude that $N_1(P)=G$. That means that ${\bf P}$ is a normal subgroup of $\bG$, so is equal to $\bG$ by simplicity.
\end{proof}

To conclude the proof of the proposition \ref{pro:unipotent}, note that the rigidity theorem \ref{the:ratner} implies that $\mu$ is invariant under some finite index subgroup $P$ of $G$. As $G$ is a simply connected group, $G$ itself is the unique finite index subgroup of $G$. Eventually $\mu$ is $G$-invariant so is the Haar probability measure on $\GG$.
\end{proof}

\subsection{Equidistribution of spheres}

We need a last step before proving theorem \ref{the:Horbit} : that is a proposition very similar to proposition \ref{pro:unipotent} but more adapted to Cartan decomposition in the group $H^{ss}$. Recall that, at the begining of section \ref{section:equidistribution}, we defined the Cartan decomposition $H^{ss}=C D A^+ C$. The following proposition holds (compare with \cite[Corollary 1.2]{shah}):

\begin{proposition}\label{pro:spheres}
Let $(G,H,\G)$ be a triple under study. Let $(h_n)$ be a sequence in $H^{ss}$, $t_n$ a sequence of positive number going to $+\infty$ and $\mu$ a probability measure on $C$ absolutely continuous with respect to the Haar probability measure on $C$. We assume that for all $c$ in the support of $\mu$, we have $D(h_n c)\leq (1-\varepsilon)t_n$ for some $\varepsilon>0$. Let $N$ be the smallest normal subgroup of $H$ such that the projection of the support of $\dr P((a_n)_*\mu, t_n)$ is bounded in $H/N$. Assume that $\G N$ is dense in $G$. 

Then the projection of probability measures $\dr P((a_n)_*\mu, t_n)$ in $\GG$ becomes equidistributed:
$$\lim_{n\to\infty} \pi_*(\dr P((a_n)_*\mu, t_n)) = m_\GG\; ,$$
that is, for every function $\phi$ continuous with compact support on $\GG$, we have:
$$\int_H \phi(h \G/\G) d\dr P((a_n)_*\mu, t_n) \xrightarrow{n\to\infty}\int_\GG \phi dm_\GG\; .$$
\end{proposition}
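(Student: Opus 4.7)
The plan is to reduce this statement to Proposition \ref{pro:unipotent} by combining the Cartan decomposition $H^{ss}=CDA^+C$ with a Bruhat-type disintegration of $\mu$ along the contracted unipotent direction.

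First I would use the Cartan decomposition to write $h_n = c_n d_n a_n c'_n$. Since $C$ is compact and $D$ is finite, extracting a subsequence allows me to assume $c_n\to c_\infty$, $d_n = d$ is constant, $c'_n\to c'_\infty$, and the sequence $(a_n)$ in $A^+$ is simplified. Because $C$ is a group and $c'_n\in C$, for $c\in \d{Supp}(\mu)$ the element $h_n c$ can be rewritten as $c_n d\cdot a_n\cdot (c'_n c)$, so $(h_n)_*\mu$ equals the translate under $c_n d$ of $(a_n)_*((c'_n)_*\mu)$, where $(c'_n)_*\mu$ is still a compactly supported absolutely continuous probability on $C$. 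Using Lemma \ref{lem:limsupport} to replace $(c'_n)_*\mu$ by its limit $(c'_\infty)_*\mu$, and Lemma \ref{lem:limtranslate} to absorb the convergent left translation by $c_n d$ (the strict inequality $D(h_n c)\leq(1-\varepsilon)t_n$ supplies the margin needed to apply this second lemma to the associated probability measures $\dr P(\cdot,t_n)$), the problem reduces to showing
$$\pi_*\bigl(\dr P((a_n)_*\nu,t_n)\bigr)\xrightarrow{n\to\infty} m_\GG$$
for every compactly supported absolutely continuous probability $\nu$ on $C$, with $(a_n)$ simplified and with the same normal subgroup hypothesis on $N$.

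Next I would disintegrate $\nu$ along the contracted unipotent $U^+$ associated to $(a_n)$. The open Bruhat cell of $H^{ss}$ factors as $U^+ P^-$ where $P^-$ is the opposite parabolic; covering $C$ by finitely many charts of this form and discarding a boundary of $\nu$-measure zero (permitted by absolute continuity of $\nu$), one obtains a Fubini-type decomposition $\nu = \int l_p\, d\bar\nu(p)$ where $\bar\nu$ is a measure on a local section and, for $\bar\nu$-a.e. $p$, $l_p$ is a compactly supported probability on $U^+$ absolutely continuous with respect to Haar measure. A typical point in the support of $(a_n)_*l_p$ reads $a_n u^+ p = (a_n u^+ a_n^{-1})\cdot(a_n p)$; since $(a_n)$ contracts $U^+$, the conjugate $a_n u^+ a_n^{-1}$ lives in a fixed compact set, so the dynamics of $a_n u^+$ modulo any normal subgroup matches, for large $n$, the dynamics treated in Proposition \ref{pro:unipotent}, with the pertinent ``smallest normal subgroup'' being precisely the global $N$ of our hypothesis (or a subgroup whose $\G$-translate is still dense, since $\overline{\G N}=G$).

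At this point Proposition \ref{pro:unipotent} applies to each conditional sequence and yields, for $\bar\nu$-a.e. $p$,
$$\pi_*\bigl(\dr P((a_n)_*l_p,t_n)\bigr)\xrightarrow{n\to\infty} m_\GG.$$
Dominated convergence integrated against $\bar\nu$ then gives the reduced statement, and unwinding the reductions of the first paragraph concludes the proof. The main obstacle I anticipate is the middle step: one must verify that the Bruhat disintegration is compatible with the ball structure defined by the size function $D$ so that the conditional measures $l_p$ are uniformly absolutely continuous in a sense strong enough to invoke Proposition \ref{pro:unipotent} (in particular, Lemma \ref{lem:abscon} used inside that proof), and one must carefully match the normal subgroup controlling each conditional with the global $N$ so that the density hypothesis transfers. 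The bookkeeping near the boundary of the open Bruhat cell and the interaction of $p$-dependence with the balls $H_{t_n}$ deserve the bulk of the technical attention.
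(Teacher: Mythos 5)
Your skeleton is the right one — reduce to $h_n\in A^+$ via Cartan, disintegrate the measure on $C$ through a Bruhat chart, invoke Proposition \ref{pro:unipotent} fiberwise, and close with dominated convergence — and that is precisely how the paper proceeds. However two of the details you wrote down are not just bookkeeping; they would make the middle step fail as stated.

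First, your factorization is in the wrong order and your contraction claim is reversed. You factor the cell as $U^+P^-$ and write a typical point as $a_nu^+p=(a_nu^+a_n^{-1})(a_np)$, asserting that $a_nu^+a_n^{-1}$ stays bounded because ``$(a_n)$ contracts $U^+$.'' But the contraction in the definition of $U^+$ is $a_n^{-1}u^+a_n\to e$, so $a_nu^+a_n^{-1}$ \emph{expands} and in general escapes every compact set. With that grouping you never isolate an honest ``$(a_n)_*(\text{measure on }U^+)$'' to feed into Proposition \ref{pro:unipotent}. The fix — which is what the paper does — is to use the opposite order $P^-U^+$, parametrizing a neighbourhood of $c\in C$ by $(p^-,u^+)\mapsto p^-u^+c$, so that $a_np^-u^+=(a_np^-a_n^{-1})(a_nu^+)$: now the first factor is bounded (indeed convergent, since $P^-$ is exactly the subgroup bounded under that conjugation), and Lemma \ref{lem:limtranslate} lets you replace $\dr P\bigl((a_np^-)_*\nu^+_x,t_n\bigr)$ by $\dr P\bigl((a_n)_*\nu^+_x,t_n\bigr)$ before applying Proposition \ref{pro:unipotent}.

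Second, you cannot obtain conditional measures $l_p$ on $U^+$ absolutely continuous with respect to Haar by disintegrating $\mu$ itself. At an archimedean place $C_\nu$ is a lower-dimensional compact subgroup of $H^{ss}_\nu$, so $\mu$ is singular with respect to $m_{H^{ss}}$; a Fubini decomposition across a $P^-\times U^+$ chart then yields singular conditionals (supported on the trace of $C$ in the chart), not absolutely continuous ones, and Proposition \ref{pro:unipotent} cannot be applied. The paper handles this by first ``thickening'' $\mu$ into $\lambda\otimes\mu$ with $\lambda$ an absolutely continuous probability on a small neighbourhood of $Id$ in $U^-_\infty$, using Shah's argument to see that $\lambda\otimes\mu$ is absolutely continuous on $H^{ss}$; it then shows (via $a_noa_n^{-1}\to Id$, Lemma \ref{lem:limtranslate} and dominated convergence) that the projected measures for $\lambda\otimes\mu$ and for $\mu$ are asymptotically equal, after which Shah's disintegration lemma produces the absolutely continuous fiber measures $\nu^+_x$ on $U^+$ you actually want. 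At non-archimedean places $C_\nu$ is open, so no thickening is needed — but you do need it on the archimedean factor, and your proposal omits it. You flagged the compatibility of the disintegration with the ball structure as the main risk; the thickening and the $P^-U^+$ order are exactly what make that risk evaporate.
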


\begin{proof}
We will prove that any weak limit of this sequence of probability measure is the Haar measure $m_\GG$.

First of all, we may assume that $h_n$ is an element of $A^+$. Indeed, using Cartan decomposition, we write $h_n=c^1_n d_n a_n c^2_n$, and, up to an extraction, the three sequences $c^1_n$, $c^2_n$ and $d_n$ converge to respectively $c^1$, $c^2$ and $d$. Now, let $\mu'$ be the pushforward of $\mu$ under $c^2$ : $\mu'(c^2A)=\mu(A)$. The lemma \ref{lem:limsupport} guarantees that the equidistribution of $\pi_*(\dr P((h_n)_*\mu, t_n))$ is equivalent to the one of $\pi_*(\dr P((a_n)_*\mu', t_n))$. And by construction, $N$ is also the smallest normal subgroup such that the projection of $\d{Supp}((a_n)_*\mu', t_n)$ is bounded in $H/N$.

Moreover, up to another extraction, we assume that $a_n$ is simplified.
Consider now the opposite parabolic subgroup $P^-$ to $U^+$ in $H^{ss}$ and $U^-$ the expanded unipotent subgroup : 
$$U^- = \left\{ h\in H^{ss} \st \lim_{n\to +\infty}a_n h a_n^{-1} =e \right\}\, .$$
Every neighbourhood of an element $c$ in $C$, contains a neighbourhood which is homeomorphic to a neighbourhood of $Id$ in $P^-\times U^+$ via the application $(p^-,u^+)\mapsto p^-u^+c$. We may split the support of $\mu$ in such sets (up to a negligible set), or in other words, we assume $\mu$ to be supported inside an open set homeomorphic to an open set $\Omega^-\times \Omega^+$ in $P^-\times U^+$. We furthermore assume that both $\Omega^-$ and $\Omega^+$ are product set of the form $\displaystyle \prod_{\nu\in S}\Omega_\nu$. Moreover at the archimedean places, we may "thicken" a little bit $\mu$ to construct a measure absolutely continuous with respect to $m_{H^{ss}}$ : let $\lambda$ be a probability measure on a sufficently small neighbourhood $O$ of $Id$ in $U^-_\infty$ (the archimedean part of $U^-$) absolutely continuous with respect to the Haar measure on $U^-_\infty$. Then $\lambda \otimes \mu$ is absolutely continuous with respect to the Haar measure on $H^{ss}$ (see \cite[Page 15]{shah}).

Looking at the action of $a_n$ on $U^-$ and using lemma \ref{lem:limtranslate} it is clear that for every function $f$ continuous with compact support in $\GG$, the integrals of $f$ for the both measures $\pi_*(\dr P((a_n)_*\lambda \otimes\mu,t_n))$ and $\pi_*(\dr P((a_n)_*\mu,t_n))$ are equivalent as $n$ go to $\infty$:\\
$\,$\\
$
|\int_\GG f d\pi_*(\dr P((a_n)_*\lambda \otimes\mu,t_n))- \int_{U^+\times H^u} f(x) d\pi_*(\dr P((a_n)_*\mu,t_n))|$\\

\smallskip

$
\leq  \int_{U^-}|\int_H f(x) d\pi_*\left((\dr P((a_n o a_n^{-1})a_n)_*\mu,t_n)-(\dr P((a_n o a_n^{-1})a_n)_*\mu,t_n)\right)(x)|d\lambda(o)$\\
\begin{eqnarray}\label{eqn:compare}
\xrightarrow{n\to\infty} 0
\end{eqnarray}

The limit is obtained using $a_n o a_n^{-1}\xrightarrow{n\to \infty} Id$, lemma \ref{lem:limtranslate} and the dominated convergence theorem.

 We work now with $\lambda\otimes \mu$. Remark that, at non-archimedean places, we do not have to modify $\mu$, as maximal compact subgroups are also open.

Now, using \cite[Proposition 6.1]{shah}, we may decompose this probability measure $\lambda \otimes\mu$ in the product $\Omega^-\times \Omega^+$ : there are a probability measure $\nu^-$ on $\Omega^-$ and for almost all $x$ in $\Omega^-$, a probability measure $\nu^+_x$ on $\Omega^+$ such that :
\begin{itemize}
\item $\nu^-$ and all the $\nu^+_\omega$ are absolutely continuous with respect to the Haar measure on $P^-$ and $U^+$ respectively.
\item for all $\phi$ continuous with compact support in $H^{ss}$, we have 
$$\int_{H^{ss}} \phi d(\lambda\otimes \mu)=\int_{\Omega^-}\int_{\Omega^+} \phi(xy) d\nu^+_x(y)d \nu^-(x)\, .$$
\end{itemize}

Consider now a function $f$ continuous with compact support in $\GG$. We have:
$$
\int_\GG f d\pi_*(\dr P((a_n)_*\lambda \otimes\mu,t_n))  =  \int_{\Omega^-}\int_{\Omega^+\times H^u} f(y\G/\G) d\dr P((a_nx)_*\nu^+_x, t_n)(y)d\nu^-(x)
$$

So the last difficulty that remains is to compare the two probability measures $\dr P((a_nx)_*\nu^+_x, t_n)$ and $\dr P((a_n)_*\nu^+_x, t_n)$ : if we prove that they are sufficiently close, then we may use the proposition \ref{pro:unipotent} to conclude that the limit is the Haar probability measure $m_\GG$. But under conjugacy by $a_n$, the elements in $P^-$ remains bounded. So, if we choose the support of $\lambda$ small enough, the lemma \ref{lem:limtranslate} ensures that the two measures $\dr P((a_nx)_*\nu^+_x, t_n)=\dr P((a_n x a_n^{-1})_*(a_n)_*\nu^+_x, t_n)$ and $\dr P((a_n)_*\nu^+_x, t_n)$ are arbitrarily closed. 

Fix $\epsilon>0$ and choose the support $O$ of $\lambda$ such that we have : for all $x\in O$, all $n$ $$\left|\int_{\Omega^+\times H^u} f(y\G/\G) d\dr P((a_nx)_*\nu^+_x, t_n)(y)-\int_{\Omega^+\times H^u} f(y\G/\G) d\dr P((a_n)_*\nu^+_x, t_n)(y)\right|\leq \epsilon$$

Then, we have :
$$
\left| \int_\GG f d\pi_*(\dr P((a_n)_*\lambda \otimes\mu,t_n)) - \int_{\Omega^-}\int_{\Omega^+\times H^u} f(y\G/\G) d\dr P((a_n)_*\nu^+_x, t_n)(y)d\nu^-(x)\right|\leq \epsilon
$$

Now, the proposition \ref{pro:unipotent} states that for all $x$, we have the limit:
$$\int_{\Omega^+\times H^u} f(y\G/\G) d\dr P((a_n)_*\nu^+_x, t_n)(y)\xrightarrow{n\to\infty}\int_\GG f dm_\GG$$
We conclude applying the dominated convergence theorem:
$$
\left|\int_\GG f d\pi_*(\dr P((a_n)_*\lambda \otimes\mu,t_n)) - \int_{\GG}fdm_\GG\right|\leq \epsilon
$$

So the previous inequality together with \ref{eqn:compare} leads to (for $n$ big enough):
$$
|\int_\GG f d\pi_*(\dr P((a_n)_*\mu,t_n)) - \int_{\GG}fdm_\GG|\leq 2\epsilon
$$
As this is true for arbitrary $\epsilon$, we have finally obtained the desired result :
$$
\int_\GG f d\pi_*(\dr P((a_n)_*\mu,t_n)) \xrightarrow{n\to \infty} \int_{\GG}fdm_\GG\, .
$$

The proposition is proven
\end{proof}

Thanks to this proposition, we are able to define a subset of large relative volume in $H$ such that, basically, as soon as the support of $\dr P(h_*m_C,t)$ hits this subset, the projection of this measure in $\GG$ is closed to the Haar probability measure (once again, the statement is a bit more complicated than what I just explained but this will be the exact result needed):

\begin{cor}\label{coro:spheres}
Let $(G,H,\G)$ be a triple under study, together with a size function $D$. Assume that every dominant normal subgroup of $H$ has a dense orbit in $\GG$.
Fix $\varepsilon>0$, $f$ a continuous function with compact support in $\GG$, and $O$ some open subset in $C$.Then there is a finite number of non-dominant normal subgroups $N_1$, $\ldots$, $N_k$ of $H$, a compact subset $B$ in $H$ such that:

For $h$ in $H$, $O' \subset C$ containing $O$ with $\mu$ the probability measure on $O'$ proportional to the Haar measure on $C$ and $t>0$ verifying for all $o$ in $O$, $D(go)\leq \frac{t}{1+\varepsilon}$, we have:

If the support of $\dr P(h_*\mu,t)$ is not included in any $B N_i$, then 
$$|\int_\GG f d \pi_*(\dr P(h_*\mu,t))-\int_\GG f dm_\GG|\leq \varepsilon$$
\end{cor}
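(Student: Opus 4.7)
The plan is to argue by contradiction. Suppose no such finite list $\{N_1,\ldots,N_k\}$ and compact $B$ exist: then for every finite collection of non-dominant normal subgroups of $H$ and every compact set of $H$, there is an instance $(h,O',\mu,t)$ satisfying the hypotheses of the corollary with $\mathrm{Supp}(\dr P(h_*\mu,t))$ not contained in any $BN_i$ while $\bigl|\int f\,d\pi_*\dr P(h_*\mu,t)-\int f\,dm_\GG\bigr|>\varepsilon$. Before exploiting this, I would first enumerate the normal subgroups of $H$ that can actually arise as the minimal $N$ of Proposition~\ref{pro:spheres} along a subsequence: because $H^{ss}$ is semisimple its normal subgroups are indexed by subsets of simple factors (finitely many), because simplified patterns in the Cartan decomposition are classified by the subsets of $\Phi$ on which $\alpha(a_n)$ stays bounded (also finitely many), and because a parallel algebraic analysis constrains the $H^u$-part, one obtains an (essentially finite) enumeration $\mathcal N_1,\mathcal N_2,\ldots$ of candidates. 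Fixing a compact exhaustion $B_n\nearrow H$, I would apply the negated statement to the non-dominant members of $\{\mathcal N_1,\ldots,\mathcal N_n\}$ and to $B_n$ to produce a sequence $(h_n,O'_n,\mu_n,t_n)$ of bad instances.

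Next I would apply the Cartan decomposition $h_n=c_n^1 d_n a_n c_n^2$ and extract a subsequence along which $c_n^i\to c^i$ in the compact $C$, $d_n=d$ is constant, and $a_n$ is simplified in $A^+$. Setting $\tilde\mu_n=(c_n^2)_*\mu_n$ and using lemmas~\ref{lem:limsupport} and \ref{lem:limtranslate} applied to the bounded left-translation by $c_n^1 d$ (exactly as at the start of the proof of Proposition~\ref{pro:spheres}), the integrals of $f$ against $\pi_*\dr P((h_n)_*\mu_n,t_n)$ and against $\pi_*\dr P((a_n)_*\tilde\mu_n,t_n)$ differ by a quantity tending to $0$. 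Let $N$ denote the smallest normal subgroup of $H$ such that the projection of $\mathrm{Supp}((a_n)_*\tilde\mu_n,t_n)$ remains bounded in $H/N$; by the preliminary enumeration $N$ is one of the $\mathcal N_j$'s whenever it is non-dominant.

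Two cases then remain. If $N$ is dominant, the standing hypothesis gives $N\G$ dense in $G$, so Proposition~\ref{pro:spheres} yields $\pi_*\dr P((a_n)_*\tilde\mu_n,t_n)\to m_\GG$, contradicting the violation of the integral inequality we arranged. If $N$ is non-dominant, say $N=\mathcal N_{j_0}$, then by the very definition of $N$ there is a fixed compact $B'\subset H$ with $\mathrm{Supp}((a_n)_*\tilde\mu_n,t_n)\subset B'\mathcal N_{j_0}$, whence $\mathrm{Supp}((h_n)_*\mu_n,t_n)\subset B''\mathcal N_{j_0}$ for an enlarged compact $B''$; for $n$ large enough that $j_0\le n$ and $B''\subset B_n$, this contradicts the escape condition imposed in the construction. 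I expect the main technical obstacle to lie in the enumeration step: namely, justifying that only finitely many (or at least countably many) normal subgroups of $H$ can play the role of the minimal $N$ for a Cartan-simplified subsequence of balls coming from the size function $D$. This relies on the algebraic form of $D$ combined with the structural decomposition $H=H^{ss}\rtimes H^u$ and the finite combinatorics of the restricted root system $\Phi$.
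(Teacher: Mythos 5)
Your scaffolding---contradiction, diagonal extraction against compact exhaustions, Cartan decomposition and reduction via lemmas \ref{lem:limsupport}/\ref{lem:limtranslate}, then a dichotomy on whether the minimal $N$ of Proposition~\ref{pro:spheres} is dominant or not---matches the paper's. The genuine gap is exactly where you flag it: the ``enumeration of candidates'' $\mathcal N_1,\mathcal N_2,\ldots$ for the minimal $N$. You would need every normal subgroup of $H$ that can arise as the smallest subgroup confining the supports to appear on a fixed finite (or cofinal) list; but normal subgroups of $H=H^{ss}\rtimes H^u$ are not exhausted by the combinatorics of the simple factors of $H^{ss}$ and subsets of $\Phi$, since the unipotent part $H^u$ carries a continuum of $H^{ss}$-invariant normal subgroups, and the claim that only finitely many of these occur as minimal $N$'s is not established by what you wrote. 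As stated, the proof does not close.

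The paper avoids this by choosing the finite family differently: it takes $N_1,\ldots,N_k$ to be the \emph{maximal} non-dominant normal subgroups of $H$ (finite in number), rather than trying to list which groups can be a minimal $N$. With that choice the non-dominant branch of your dichotomy becomes automatic: if the minimal $N$ along the bad subsequence is non-dominant, then $N\subset N_i$ for some $i$, hence the supports sit in a compact neighborhood of $N_i$, contradicting the escape condition---no need to know $N$ itself is on the list. So you do not need to prove any enumeration of minimal $N$'s at all; replacing your $\mathcal N_j$'s by the maximal non-dominant normal subgroups and observing that containment, rather than equality, suffices, repairs the argument and recovers the paper's proof.
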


\begin{proof}
Take the $N_i$'s to be the maximal normal non dominant subgroups. They are in finite number. Suppose they do not verify the corollary. Then we construct a sequence $h_n$, $t_n$, $O_n$ such that the supports of $\dr P((h_n)_*\mu_n,t_n)$ are not included in any compact neighbourhood of a non-dominant normal subgroup and the difference of integrals is always greater than $\varepsilon$:
\begin{eqnarray}\label{eqn:diff}
|\int_\GG f d \pi_*(\dr P((h_n)_*\mu_n,t_n))-\int_\GG f dm_\GG|> \varepsilon
\end{eqnarray} 

Up to an extraction, we may assume that $\mu_n$ converges to a measure which is equal the probability measure $\mu_\infty$ on an open $O'_\infty$ containing $O$ and proportional to the Haar measure of $C$.

These supports are yet included in a compact neighbourhood of some normal subgroup $N$ which has to be dominant. By assumption, $N$ has a dense projection in $\GG$. So we may apply the above proposition to this sequence: the projection $\pi_*(\dr P((h_n)_*\mu_\infty,t_n))$ converges to the Haar probability measure $m_\GG$. But, using lemma \ref{lem:limsupport}, letting $n$ go to infinity, the measure $\pi_*(\dr P((h_n)_*\mu_\infty,t_n))$ is arbitrarily closed to $\pi_*(\dr P((h_n)_*\mu_n,t_n))$ This contradicts \ref{eqn:diff}.
\end{proof}

\subsection{Equidistribution of balls}

At last we are able to conclude the proof of equidistribution of balls. Fix a function $f$ continuous with compact support in $\GG$. Fix $\varepsilon>0$. Let $\eta>0$ be such that $\frac{m_H(H_{(1+\eta) t})}{m_H(H_t)}\leq 1+\varepsilon$ for all $t$.

There is a neighborhood $O$ of $Id$ in $C$ such that for all $h\in H^{ss}$ we have $D(ho)\leq \sqrt{1+\eta}D(h)$. And we may choose $O$ such that $C$ is a disjoint union of translates of $O$ (up to a negligible set): there exist $c_1,\ldots, c_s$ such that $c_iO\cap c_jO$ has measure $0$ and the union $\cup_1^s c_i O$ is of full measure in $C$. Note $\mu_O$ the restriction of the probability Haar measure of $C$ to $O$.

Let $\tilde H_t$ be the union over $c\in CD$, $a\in A^+$, and $1\leq i\leq s$ with $D(cac_i)\leq t$, of the support of $m((cac_i)_*\mu_{O},(1+\eta)t)$. Thanks to the Cartan decomposition, up to a negligible set, $\tilde H_t$ contains $H_t$, is contained in $H_{(1+\eta) t}$ and the restriction of $m_H$ to $\tilde H_t$ may be written:
$$ (m_H)_{|\tilde H_t} =\sum_1^s \int_{c\in CD ,\; a\in A,\; D(cac_i)\leq t} m((cac_i)_*\mu_O,(1+\eta)t)$$
 
Let $E_t$ be the union of the supports of measures $m((ca)_*\mu_{ca},(1+\eta)t)$ which are completely included in $\displaystyle B\bigcup_1^k N_i$ (the sets constructed in the above corollary). As none of the $N_i$'s are dominant, for $t$ big enough, the relative mass of $E_t$ in $\tilde H_t$ is less than $\varepsilon$ and the symmetric difference between $H_t$ and $\tilde H_t\setminus E_t$ is almost negligible:
$$\frac{m_H(H_t \Delta (\tilde H_t \setminus E_t))}{m_H(H_t)}\leq 2\varepsilon $$

Corollary \ref{coro:spheres} implies that for all $a\in A$, $c\in CD$ and $1\leq i\leq s$, if the support $\d{Supp}(m((cac_i)_*\mu_O, (1+\eta)t))$ is not included in $E_t$, then its projection is pretty well distributed: 
$$\left|\int_H f\; dm((cac_i)_*\mu_O, (1+\eta)t)- m((cac_i)_*\mu_O, (1+\eta)t)(H)\int_\GG f \;dm_\GG\right|\leq \varepsilon$$

Integrating all these approximation over $c$, $a$ and $c_i$ leads to:
$$|\frac{1}{m_H(\tilde H_t\setminus E_t)}\int_{\tilde H_t\setminus E_t} fd\pi_*(m_H) -\int_\GG fdm_\GG|\leq \varepsilon \;.$$

So going back to the desired integral, we get (for $t$ big enough):
$$|\frac{1}{m_H(H_t)}\int_{H_t} fd\pi_*(m_H) -\int_\GG fdm_\GG|\leq (1+4\max(|f|))\varepsilon \;.$$
As $\epsilon$ is arbitrarily small, we get the desired result:
$$\frac{1}{m_H(H_t)}\int_{H_t} fd\pi_*(m_H) \xrightarrow{t\to \infty}\int_\GG fdm_\GG\;.$$

This concludes the proof of theorem \ref{the:Horbit}.
 
\section{Applications}\label{sec:exa}

We conclude this text by some explanations on the applications described in the introduction.

\subsection{In dimension 2}

Recall the framework: we consider the group $G=\d{SL}(2,\dr R)\times \d{SL}(2,\dr Q_p)$ for $p$ a prime number, and the lattice $\G=\d{SL}(2,\dr Z[\frac{1}{p}])$. We fix here (for sake of simplicity) the standard euclidean norm $| . |_\infty$ on the matrix algebra $\mathcal M(2,\dr R)$ and the max-norm $|.|_p$ on $\mathcal M(2,\dr Q_p)$. For a point $v$ in $\dr R^2$, we note also $|v|_\infty$ the norm of the matrix whose first column is $v$ and the second one is $0$. We define similarly the norm of a point in $\dr Q_p^2$. We choose a Haar measure $m=m_\infty\otimes m_p$ on $G$.

The first result was:

\begin{appl*}[\ref{appl21}]
Let $O$ be a bounded open subset of $\d{SL}(2,\dr Q_p)$. Note $\G^O_T$ the set of elements $\g\in \G$ such that $|\g|_\infty \leq T$ and $\g\in O$ as an element of $\d{SL}(2,\dr Q_p)$. Let $v$ be a point of the plane  $\dr R^2\setminus \{0\}$ with coordinates independant over $\dr Q$.

Then we have the following limit, for any function $\phi$ continuous with compact support in $\dr R^2\setminus \{0\}$:
$$ \frac{1}{T} \sum_{\G_T^O} \phi(\g(v)) \xrightarrow{T\to \infty} \frac{m_p(O)}{m(\GG)|v|_\infty}\int_{\dr R^2} \phi(w)\frac{dw}{|w|_\infty}$$
\end{appl*}

\begin{proof}
We work here in the product of $\dr R^2\setminus\{0\}$ and $\d{SL}(2,\dr Q_p)$. We see it as  the homogeneous space $\GH$ with $H=\d{Stab}(v)$  the stabilizator of $v$ for the linear action of $\d{SL}(2,\dr R)$ on the plane.

Then it is not difficult to see that the hypothesis on the norm are fulfilled and that $H$ has no dominant subgroup except itself. Moreover the volume of balls are explicitely computed: the ratios of $m_H(H_tg)$ and $m_H(H_t)$ tends to $\frac{1}{|v||w|}$ where $w=g(v)$ \cite[Section 12.4]{goroweiss}. Remark that there is no need here to split the parameter space.

It remains to prove that $H.\d{SL}(2,\dr Z[\frac{1}{p}])$ is dense in $G$. But it contains $Stab(v).\d{SL}(2,\dr Z)$ which is by hypothesis dense in $\d{SL}(2,\dr R)$. Now we may use the strong aproximation in $\d{SL}(2)$ \cite{platonov-rapinchuk}: the algebraic group $\d{SL}(2)$ is semisimple simply connected, hence the product $\d{SL}(2,\dr R).\d{SL}(2,\dr Z[\frac{1}{p}])$ is dense in $G$. This yields the desired property: $H.\d{SL}(2,\dr Z[\frac{1}{p}])$ is dense in $G$.

Now, theorem \ref{the:duality} implies the stated result.
\end{proof}

The second application was the following one. Recall that on the $p$-adic plane, we normalize the measure such that it gives mass $1$ to $\dr Z_p^2$. The result is that if your beginning point generates the whole plane among the $\dr Q$-subspaces, then its orbit is dense and you get a distribution result (the function $E$ appearing is the integer part):

\begin{appl*}[\ref{appl22}]
Let $(v_\infty,v_p)$ be an element of $(\dr R^2\setminus{0})\times(\dr Q_p^2\setminus{0})$. Suppose that any $\dr Q$-subspace $V$ of $\dr Q^2$ verifying $v_\infty\in V\otimes_{\dr Q} \dr R$ and $v_p \in V\otimes_{\dr Q} \dr Q_p$ is $\dr Q^2$. Denote $\G_T$ the set of elements $\g\in \G$ with $|\g|_\infty\leq T$ and $|\g|_p\leq T$.

Then, for all function $\phi$ continuous with compact support in $(\dr R^2\setminus{0})\times(\dr Q_p^2\setminus{0})$, we have the following limit:\\
$\frac{1}{T p^{E(\ln_p(T))}} \sum_{\G_T} \phi(\g v_\infty, \g v_p)\xrightarrow{T\to\infty}$
\begin{flushright} $\frac{p^2-1}{p^2 m(\GG)|v_\infty|_\infty |v_p|_p} \int_{\dr R^2\times \dr Q_p^2} \phi(v,w) \frac{dv dw}{|w|_\infty |w|_p}$\end{flushright}
\end{appl*}

\begin{proof}
The proof here is similar to the previous one, the group $H$ being $\d{Stab}(v_\infty)\times \d{Stab}(v_p)$. The hypothesis on the norm are fulfilled, as $H$ is unipotent. The volume ratio limits are easy to compute and left to the reader. You just have to be careful with the normalizations of measures, letting appear this constant $\frac{p^2-1}{p^2}$.

So it just remains to prove that $H\d{SL}(2,\dr Z[\frac{1}{p}])$ is dense in $G$. The key point is that its closure	 must be (up to finite index) the $\dr R\times \dr Q_p$-points of a $\dr Q$-subgroup of $\d{SL}(2)$, by Tomanov theorem : it is a closed subset in $\GG$ invariant under unipotent subgroups.

Hence, if either $v_\infty$ or $v_p$ has coordinates independant over $\dr Q$, the argument in previous application show the density. The only remaining case is when both $v_\infty$ and $v_p$ are stabilized by a $\dr Q$-unipotent group. But the assumption that $v_\infty$ and $v_p$ "generates" $\dr Q^2$ is then equivalent to the fact that these two stabilizers are different. Now we may conclude, arguing that two different unipotent subgroups of $\d{SL}(2,\dr Q)$ generate the whole group. Hence the smallest $\dr Q$-subgroup of  $\d{SL}(2,\dr Q)$ such that its real points contains the stabilizer of $v_\infty$ and its $p$-adic the stabilizer of $v_p$ is $\d{SL}(2)$. And the closure of $H.\d{SL}(2,\dr Z[\frac{1}{p}])$ is $G$.
\end{proof}

The two previous examples showed how to profit of both the rigidity of orbit closures in an $S$-arithmetic setting and algebraic featurees such as strong approximation in the ambient group $G$. These arguments are also the core of the next case.

\subsection{In greater dimension}

Recall that we look at the action of $\G=\d{SL}(n,\dr Z)$ on the $k$-th exterior power $\Lambda^k(\dr R^n)$. And we fix the standard euclidean norm $|.|$ on $\mathcal M(n,\dr R)$. We consider also the standard euclidean norm on $\Lambda^k(\dr R^n)$ and $m$ is a Haar measure on $\d{SL}(n,\dr R)$.
We want to prove:

\begin{appl*}[\ref{appln}]
Let $v$ be a non-zero element of $\Lambda^k(\dr R^n)$ such that its corresponding $k$-plane of $\dr R^n$ contains no rational vector. Denote $\G_T$ the set of elements $\g\in \G$ with $|\g|\leq T$.

Then we have a positive real constant $c$ (independant of $\G$ and $v$) such that for all function $\phi$ continuous with compact support on $\Lambda^k(\dr R^n)\setminus \{0\}$:
$$\frac{1}{T^{n^2+k^2-nk-n}}\sum_{\G_T} \phi(\g v) \xrightarrow{T\to\infty} \frac{c}{m(\GG)|v|}\int_{\Lambda^k(\dr R^n)} \phi(v') \frac{dv'}{|v'|}$$
\end{appl*}

\begin{proof}
Here we have to be more careful than in previous section. We consider the subgroup $H = \d{Stab} (v)$. It is a conjugate of the group $H_0$ of the form:
$$H_0=\left( \begin{matrix} \d{SL}(k,\dr R) & H^u\\ 0 & \d{SL}(n-k,\dr R)\end{matrix}\right):=\left( \begin{matrix} H_k & H^u\\ 0 & H_{n-k}\end{matrix}\right)$$

So it is a semidirect product of a semisimple and a unipotent group. Moreover the quotient $H_0\backslash G$ identifies with $\Lambda^k(\dr R^n)\setminus \{0\}$ via the projection associating at an element of $\d{SL}(n,\dr R)$ the exterior product of its $k$-first lines.

We have to prove the orthogonality property for the norm on $H=gH_0g^{-1}$ ($g\in \d{SL}(n,\dr R)$). The key point is that one may use Iwasawa decomposition to write $g=o a n$ where $o$ belongs to $\d{SO}(n)$, $a$ is diagonal and $n$ is upper triangular and nilpotent so an element of $H_0$. By bi-invariance of the euclidean norm under $\d{SO}(n)$, and the fact that $a$ normalizes the semisimple part of $H$ and the unipotent one, we get, for $h=gh_0g^{-1}$ with the obvious notation:
$$|h|^2= |a h_0 a^{-1}|^2 =|a h_0^{ss} a^{-1}|^2+|a h_0^{u} a^{-1}|^2=|h^{ss}|^2+|h^u|^2$$.

Now it is clear that $H_0$ has no dominant subgroup except itself, so the same holds for $H$. Let us prove that $H\G$ is dense before evaluating the volume ratio limits. The simplest way to see it is to pull back this dynamic on the space of $k$-frames : choose a family of $k$ vectors in $\dr R^n$ such that their exterior product is $v$. Then the hypothesis on $v$ is that the $k$-plane generated by this family of vector contains no non-zero rational vectors. By a theorem of Dani and Raghavan \cite{dani-raghavan}, it implies that the orbit of this family under $\G$ is dense in the space of $k$-frames. This in turn implies by projection that the orbit of $v$ under $\G$ is dense in $H_0\backslash G$, i.e. that $H\G$ is dense in $G$.

\smallskip

We have compute the volume ratios to get the limiting density. Precisely, let $w=H_0g' =g^{-1}H(g'g^{-1})$ be a non-zero point in $\Lambda^k(\dr R^n)$. Then the limiting density at $w$ given by theorem \ref{the:duality} is the ratio: $$\frac{m_H(H_t(g'g^{-1})}{m_H(H_t)}$$

The set $H_t(g'g^{-1})$ is by definition $\{h\in H \st |hgg'|\leq t\}$ ; or the set $\{h_0\in H_0 \st |gh_0g'|\leq t\}$. Hence we have to compute the measure $M_t(g,g')=m_{H_0}(\{h_0\in H_0 \st |gh_0g'|\leq t\})$. The choice of normalization of $m_{H_0}$ has no importance, as we only want to compute ratios.
Using the bi-invariance of the norm and the Iwasawa decomposition of $g$ and $g'^{-1}$, we immediatly see that $M_t(g,g)=\frac{1}{|\d{Vol}(g)| |\d{Vol}(g')|}M_t(1,1)$, where $\d{Vol}(g)$ is the determinant of the $k$ first line of $g$. And, by the definition of the exterior product, the absolute value of this determinant is the euclidean norm of their exterior product. So we may rewrite $M_t(g,g')=\frac{1}{|v||w|}M_t(1,1)$. This gives the limiting density.

At this point, we need a last estimation: an equivalent of $M_T(1,1)$ which gives the renormalisation factor $T^{n^2+k^2-nk-n}$. So we want to compute the volume of the set $\{h_0\in H_0 \st |h_0|\leq T\}$ for the standard Haar measure on $H_0$: the product of the standard Haar measure on the three groups $\d{SL}(k,\dr R)$, $\d{SL}(n-k,\dr R)$ and $H^u$. Using the estimations of Maucourant \cite{maucourant}, we see that the volume of the sphere of radius $T$ in these groups are respectively of order $T^{k^2-k-1}$, $T^{(n-k)^2-(n-k)-1}$ and $T^{k(n-k)-1}$.
So the leading term of the volume of the ball of radius $T$ is of order:
$$\int_{T_1^2+T_2^2+T_3^2\leq T^2} T_1^{k^2-k-1}T_2^{(n-k)^2-(n-k)-1}T_3^{k(n-k)-1}$$
Hence the leading term is of order: $$T^{k^2-k+(n-k)^2-(n-k)+n(n-k)}=T^{n^2+k^2-nk-n}$$

This concludes the proof of application \ref{appln}
\end{proof}

I conclude this article with the $S$-arithmetic generalization of the previous result. I leave the proof to the reader. All the arguments are in the three previous proofs except an estimation of the volume of the ball of radius $T$ in $\d{SL}(k,\dr Q_p)$ ($p$ being a prime number). Using Cartan decomposition and some basic combinatorics, we get that the leading term of this volume is $(p^{E(\ln_p(T)})^{k^2-k}$. We fix the max norm in the standard basis on $\mathcal M(n,\dr Q_p)$ and $\Lambda^k(\dr Q_p^n)$. The group $\G$ is $\d{SL}(n,\dr Z[\frac{1}{p}])$, and we note for an element $\g\in \G$, $|\g|$ the max of its real euclidean norm and $p$-adic max norm.

\begin{appl}
Let $v=(v_\infty,v_p)$ be a non-zero element of $\Lambda^k(\dr R^n\times \dr Q_p^n)$ such that there is no non-zero rational vector belonging to both the real $k$-planes associated to $v_\infty$ and the $p$-adic one associated to $v_p$. Denote $\G_T$ the set of elements $\g\in \G$ with $|\g|\leq T$.

Then we have a positive real constant $c$ (independant of $\G$ and $v$) such that for all function $\phi$ continuous with compact support on $\Lambda^k(\dr R^n)\setminus \{0\}$:

$\frac{1}{(Tp^{E(\ln_p(T))})^{n^2+k^2-nk-n}}\sum_{\G_T} \phi(\g v)\xrightarrow{T\to\infty}$
\begin{flushright}$\frac{c}{m(\GG)|v_\infty|_\infty}\int_{\Lambda^k(\dr R^n\times \dr Q_p^n)} \phi(v_\infty',v'_p) \frac{dv_\infty'v_p'}{|v_\infty'|_\infty|v_p|_p}$\end{flushright}
\end{appl}

\bibliographystyle{plain}
\bibliography{biblio}
\end{document}